\title{Symmetric Kapranov and symmetric tropical ranks}
\author{Dylan Zwick \\ \href{mailto:zwick@math.utah.edu}{zwick@math.utah.edu}}
\date{}
\begin{document}

\maketitle

\begin{abstract} 
  This paper proves the $r \times r$ minors of an $n \times n$ symmetric matrix of indeterminates are a tropical basis when $r = 2$, $r = 3$, or $r = n$, and are not when $4 < r < n$ or $r = 4, n > 12$. In the process, it introduces two new notions of rank for symmetric matrices coming from tropical geometry, the symmetric tropical and the symmetric Kapranov rank, which are the symmetric versions of their standard counterparts defined by Develin, Santos, and Sturmfels.
\end{abstract}

In this paper, we investigate the question of when the minors of a symmetric matrix of indeterminates form a tropical basis. In Section 1 we review the basic concepts from tropical geometry required to understand the rest of the paper. In Section 2 we define symmetric analogs of the tropical rank and Kapranov rank defined by Develin, Santos, and Sturmfels \cite{dss}, and investigate some basic properties of these analogs. In Section 3 we prove a number of cases where the minors do form a tropical basis. In Section 4 we prove a number of cases where the minors do not form a tropical basis. In Section 5 we conclude with some open questions related to this paper. The main results of this paper can be summarized in the following theorem:

\newtheorem{thm}{Theorem}
\begin{thm}
  The $r \times r$ minors of an $n \times n$ symmetric matrix of indeterminates form a tropical basis when $r = 2$, $r = 3$, or $r = n$, and do not form a tropical basis when $4 < r < n$, or when $r = 4$ and $n > 12$.
\end{thm}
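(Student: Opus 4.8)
The plan is to prove this theorem case by case, leveraging the dictionary between tropical bases and the equality of the various ranks (tropical, Kapranov, and Barvinok/symmetric analogs) that will be set up in Section 2. Recall that, following Develin–Santos–Sturmfels, the $r\times r$ minors of a matrix of indeterminates form a tropical basis if and only if tropical rank $< r$ is equivalent to Kapranov rank $< r$ for all matrices; so the symmetric statement should reduce to showing that symmetric tropical rank and symmetric Kapranov rank agree (resp.\ disagree) on symmetric matrices in the relevant corank range. I would organize the proof around this equivalence and treat the positive cases ($r=2,3,n$) in Section 3 and the negative cases in Section 4.

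For the \textbf{positive cases}: the case $r=n$ should be essentially immediate, since a single $n\times n$ minor (the determinant) generates the ideal and its tropicalization is cut out by the tropical determinant vanishing, so there is nothing to intersect; the symmetric tropical hypersurface of the determinant already equals the tropicalization of the symmetric determinantal variety. The case $r=2$ I would prove by a direct combinatorial argument: a symmetric matrix has symmetric tropical rank $\le 1$ iff it lies in the tropicalization of the rank-$\le 1$ symmetric variety, which for symmetric matrices means being of the form $a_i + a_j$; one checks that the $2\times 2$ tropical minors vanishing forces exactly this (this mirrors the classical $r=2$ argument, with care taken that the rank-one symmetric decomposition $v v^{T}$ tropicalizes correctly). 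The case $r=3$ is the genuinely substantial positive result: here I expect to need a careful analysis of when all $3\times 3$ symmetric minors tropically vanish, showing this forces symmetric Kapranov rank $\le 2$, likely by an explicit lifting construction — given a symmetric matrix of symmetric tropical rank $\le 2$, build an actual symmetric matrix over the Puiseux series of Kapranov rank $\le 2$ lifting it. I would try to adapt the DSS proof of the $r=3$ classical case, exploiting the combinatorics of tropically singular $3\times 3$ matrices (their classification via polytopal subdivisions / the associated graphs) and checking the symmetry can be preserved in the lift.

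For the \textbf{negative cases}: the strategy is to exhibit, for each relevant $(r,n)$, a symmetric matrix whose symmetric tropical rank is $< r$ but whose symmetric Kapranov rank is $\ge r$, witnessing failure of the tropical basis property. For $4 < r < n$ and for $r = 4, n > 12$ I would first handle a base case — likely a specific small symmetric matrix built from a classical DSS-type example (their $7\times 7$ or similar matrix realizing tropical rank $3$ but Kapranov rank $4$, symmetrized appropriately) — and then propagate to larger $n$ and larger $r$ by standard inflation arguments: padding a bad matrix by adding generic rows/columns (symmetrically) raises both ranks by one while preserving the strict inequality, and an analogous operation increases $n$ while keeping $r$ fixed. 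The numerical threshold $n > 12$ for $r = 4$ strongly suggests the base example lives on a $12$- or $13$-dimensional symmetric matrix, so pinning down that explicit example and verifying its two symmetric ranks is where the real work lies.

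\textbf{The main obstacle} I anticipate is the $r = 3$ positive case: unlike $r = 2$ or $r = n$, it requires showing a nontrivial equality of ranks, and the symmetry constraint genuinely complicates the lifting step — in the non-symmetric DSS argument one has complete freedom to choose rows and columns independently, whereas here the lift must be a symmetric matrix, so one must check that the combinatorial recipe for lifting a tropically rank-$\le 2$ pattern can be carried out compatibly with the transpose symmetry. A secondary difficulty is making the inflation/padding lemmas work cleanly in the symmetric category and nailing down the precise smallest-dimension counterexample that explains the $n > 12$ bound for $r = 4$; I would expect to need an ad hoc but explicit matrix there rather than a slick general construction.
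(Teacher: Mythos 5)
Your overall architecture agrees with the paper's: $r=n$ is Kapranov's theorem applied to the single determinant, $r=2$ is a direct symmetric lift of a tropical-rank-one matrix, $r=3$ is an adaptation of the DSS lifting argument carried out so that the lift stays symmetric, and the negative cases come from foundational examples propagated by inflation. But there is a genuine gap in your plan for the negative range $4 < r < n$. Your only proposed foundational example is a symmetrization of the DSS Fano-type matrix (tropical rank $3$, Kapranov rank $4$). As you half-anticipate, this cannot be used directly: the symmetric rearrangement of the $7\times 7$ Fano cocircuit matrix has symmetric tropical rank $4$, and the paper instead embeds it into a $13\times 13$ block matrix with symmetric tropical rank $3$ but symmetric Kapranov rank at least $4$, which is exactly the source of the $n>12$ bound. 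More importantly, both inflation moves available to you (duplicate a row together with the corresponding column to increase $n$ with $r$ fixed; border the matrix with a large constant $P$ and a small corner entry $M$ to increase $r$ and $n$ together) never decrease the difference $n-r$. Starting from a $13\times 13$ example witnessing ranks $3$ versus $4$, you can only reach pairs $(r,n)$ with $n-r$ roughly $9$ or more, so your plan cannot produce, for instance, the statement that the $5\times 5$ minors of a $6\times 6$ symmetric matrix fail to be a tropical basis, which is part of the claim for $4<r<n$.

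To close this you need a second foundational example with $n-r$ small: the paper takes Shitov's $6\times 6$ matrix with tropical rank $4$ and Kapranov rank $5$ and applies suitable (different) row and column permutations so the result is symmetric; one checks it has symmetric tropical rank $4$, and since its Kapranov rank is $5$ its symmetric Kapranov rank is exactly $5$ (it cannot exceed $5$ because the full-size minor case is handled by Kapranov's theorem). Feeding this $6\times 6$ example and the $13\times 13$ Fano-based example into the two inflation corollaries yields the full negative range. One further refinement: the border-with-$P$-and-$M$ step does not simply "raise both ranks by one"; the paper argues that a putative rank-$r$ symmetric lift of the bordered matrix could be cleaned out symmetrically in its last row and column to produce a rank-$(r-1)$ symmetric lift of the original matrix, a contradiction. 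These are repairable omissions, but as written your construction does not cover all the asserted $(r,n)$.
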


The author would like to thank the mathematics department of the University of Utah for support during the research for this paper, and in particular his advisor Aaron Bertram. The author would also like to thank Melody Chan for asking the question that inspired this paper, and for pointing out to the author that the cocircuit matrix of the Fano matroid could be rearranged into a symmetric matrix.

\section{Tropical preliminaries}

This section introduces the basic ideas from tropical geometry used in this paper, along with the results for general matrices that informed and inspired this investigation of symmetric matrices.

\subsection{Tropical basics}

The \emph{tropical semiring} $(\mathbb{R}, \oplus, \odot)$, is defined as the semiring with arithmetic operations:
\begin{center}
  $a \oplus b := min(a,b)$ \hspace{.1 in} and \hspace{.1 in} $a \odot b := a + b$.
\end{center}

A \emph{tropical monomial} $X_{1}^{a_{1}} \cdots X_{m}^{a_{m}}$ is a symbol, and represents a function equivalent to the linear form $\sum_{i} a_{i}X_{i}$ (standard addition and multiplication). 

A \emph{tropical polynomial} is a tropical sum of tropical monomials  
  \begin{center}
    $F(X_{1},\ldots,X_{m}) := \bigoplus_{a \in \mathcal{A}}C_{a}X_{1}^{a_{1}}X_{2}^{a_{2}} \cdots X_{m}^{a_{m}}$, \hspace{.1 in} with $\mathcal{A} \subset \mathbb{N}^{m}$, $C_{a} \in \mathbb{R}$
  \end{center}
  (tropical addition and multiplication), and represents a piecewise linear convex function $F: \mathbb{R}^{m} \rightarrow \mathbb{R}$. 

In this paper, tropical polynomials will be represented with upper case letters, while standard polynomials will be lower case.

The \emph{tropical hypersurface} $\textbf{V}(F)$ defined by a tropical polynomial $F$ is the locus of points $P \in \mathbb{R}^{m}$ such that at least two monomials in $F$ are minimal at $P$. This is also called the \emph{double-min locus} of $F$.

For example, the tropical hypersurface defined by the tropical polynomial
\begin{center}
  $X \oplus Y \oplus 0  = min\{x,y,0\}$
\end{center}
would include the point $(1,0)$, as both $Y$ and $0$ are minimal at that point, but would not include the point $(-1,0)$, as $X$ is uniquely minimal at that point. This is an example of a tropical line.

\vspace{.1 in}
\begin{tabular}{c}
  \centering
  \hspace{1in}\includegraphics[scale=1]{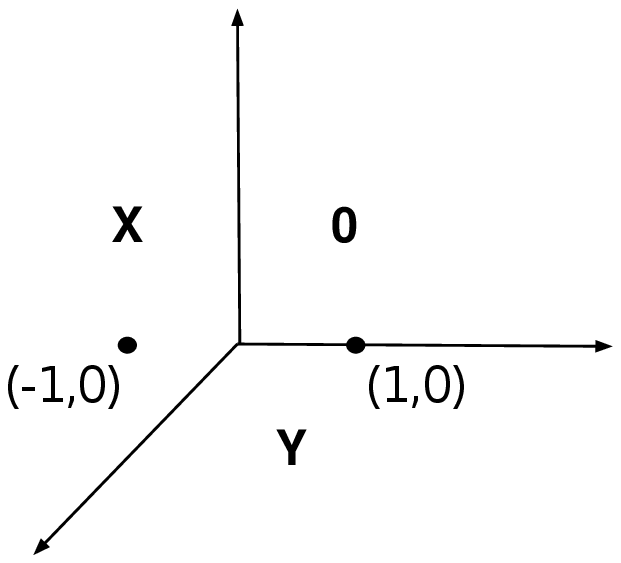}
\end{tabular}

\subsection{Tropical bases}

Let $k$ be an algebraically closed field. Let $f \in k[x_{1},\ldots,x_{m}]$ be a polynomial. The locus of points $p \in k^{m}$ such that $f(p) = 0$ is a \emph{hypersurface}, and is denoted $\textbf{V}(f)$. Let $I$ be an ideal of $k[x_{1},\ldots,x_{m}]$. The ideal $I$ defines a \emph{algebraic variety}, (or \emph{variety}, for short) $\textbf{V}(I)$, in $k^{m}$, which is the set of points $p \in k^{m}$ such that $f(p) = 0$ for all $f \in I$. If $I = (f_{1},\ldots,f_{n})$  then the set $\{f_{1},\ldots,f_{n}\}$ is a \emph{basis} for $I$, and $\textbf{V}(I)$ is equal to the locus of points $p \in k^{m}$ such that $f_{i}(p) = 0$ for all $f_{i}$ in the basis. Put succinctly
\begin{center}
  $\textbf{V}(I) = \bigcap \textbf{V}(f_{i})$.
\end{center}
So, a variety is an intersection of hypersurfaces. By the Hilbert basis theorem every ideal of $k[x_{1},\ldots,x_{m}]$ is finitely generated, so any variety is a finite intersection of hypersurfaces.

In the tropical setting there is an analog of a hypersurface, and we would like an analog of a variety. It might seem natural to define a tropical variety as the intersection of a finite set of tropical hypersurfaces, but these sets do not always have the properties we need in order for them to be useful analogs of algebraic varieties, and we instead call these sets tropical prevarieties. 

A \emph{tropical prevariety} $\textbf{V}(F_{1},\ldots,F_{n})$ is a finite intersection of tropical hypersurfaces:  
\begin{center}
  $\textbf{V}(F_{1},\ldots,F_{n}) = \bigcap_{i = 1}^{n} \textbf{V}(F_{i})$.
\end{center}

A tropical variety is defined differently. Let $K = \mathbb{C}\{\{t\}\}$ be the set of formal power series $a = c_{1}t^{a_{1}} + c_{2}t^{a_{2}} + \cdots$, where $a_{1} < a_{2} < a_{3} < \cdots$ are rational numbers that have a common denominator. These are called Puiseux series, and this set is an algebraically closed field of characteristic zero. For any nonzero element $a \in K$ define the degree of $a$ to be the value of the leading exponent $a_{1}$. This gives us a degree map $deg : K^{*} \rightarrow \mathbb{Q}$. For any two elements $a,b \in K^{*}$ we have
\begin{center}
  $deg(ab) = deg(a) + deg(b) = deg(a) \odot deg(b)$.
\end{center}
Generally, we also have
\begin{center}
  $deg(a + b) = min(deg(a),deg(b)) = deg(a) \oplus deg(b)$.
\end{center}
The only case when this addition relation is not true is when $a$ and $b$ have the same degree, and the coefficients of the leading terms cancel.

We would like to do tropical arithmetic over $\mathbb{R}$, and not just over $\mathbb{Q}$, so we enlarge the field of Puisieux series to allow this. Define the set $\tilde{K}$ by
\begin{center}
  $\tilde{K} = \left\{\sum_{\alpha \in A} c_{\alpha}t^{\alpha} | A \subset \mathbb{R} \text{ well-ordered}, c_{\alpha} \in \mathbb{C}\right\}$.
\end{center}
This is the set of Hahn series, and it is an algebraically closed field of characteristic zero containing the Puisieux series. We define a tropical variety in terms of a variety over $\tilde{K}$.

The degree map on $(\tilde{K}^{*})^{m}$ is the map $\mathcal{T}$ taking points $(p_{1},\ldots,p_{m}) \in (\tilde{K}^{*})^{m}$ to points $(deg(p_{1}),deg(p_{2}),\ldots,deg(p_{m})) \in \mathbb{R}^{m}$. A tropical variety is the image of a variety in $(\tilde{K}^{*})^{m}$ under the degree map. We call this image the \emph{tropicalization} of a set of points in $(\tilde{K}^{*})^{m}$. The tropicalization of a polynomial $f \in \tilde{K}[x_{1},\ldots,x_{m}]$ is the tropical polynomial $\mathcal{T}(f)$ formed by tropicalizing the coefficients of $f$, and converting addition and multiplication into their tropical counterparts. For example, the tropicalization of the polynomial 
\begin{center}
  $f = 3t^{2}xy - 7tx^{3}$ 
\end{center}
is the tropical polynomial
\begin{center}
  $\mathcal{T}(f) = 2XY \oplus 1X^{3}$.
\end{center}

In an unpublished manuscript, Mikhail Kapranov proved the following useful and fundamental result.

\begin{thm}[Kapranov's Theorem]
  For $f \in \tilde{K}[x_{1},\ldots,x_{m}]$ the tropical variety $\mathcal{T}(\textbf{V}(f))$ is equal to the tropical hypersurface $\textbf{V}(\mathcal{T}(f))$ determined by the tropical polynomial $\mathcal{T}(f)$.
\end{thm}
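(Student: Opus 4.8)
The plan is to establish the two inclusions $\mathcal{T}(\textbf{V}(f)) \subseteq \textbf{V}(\mathcal{T}(f))$ and $\textbf{V}(\mathcal{T}(f)) \subseteq \mathcal{T}(\textbf{V}(f))$ separately. Write $f = \sum_{a \in \mathcal{A}} c_{a}x^{a}$ with $\mathcal{A} \subset \mathbb{N}^{m}$ finite and $c_{a} \in \tilde{K}^{*}$, so that $\mathcal{T}(f) = \bigoplus_{a}\deg(c_{a})X^{a}$. Everything rests on one observation: for $p \in (\tilde{K}^{*})^{m}$ the term $c_{a}p^{a}$ has degree $\deg(c_{a}) + \langle a, \mathcal{T}(p)\rangle$, which is precisely the value at $\mathcal{T}(p)$ of the tropical monomial $\deg(c_{a})X^{a}$.

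First, the easy inclusion. Suppose $p \in \textbf{V}(f)$ has all coordinates nonzero, so $\sum_{a} c_{a}p^{a} = 0$. If the minimum of the degrees $\deg(c_{a}p^{a})$ over $a \in \mathcal{A}$ were attained by a single index $a$, that term would contribute a nonzero lowest-degree coefficient that nothing else could cancel, contradicting $f(p)=0$; hence the minimum is attained at least twice. By the observation above this says exactly that $\mathcal{T}(p)$ lies in the double-min locus $\textbf{V}(\mathcal{T}(f))$.

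For the hard inclusion, fix $w \in \textbf{V}(\mathcal{T}(f))$; I must exhibit $p \in \textbf{V}(f)$ with all coordinates nonzero and $\mathcal{T}(p) = w$. I would first normalize. The monomial substitution $x_{i} \mapsto t^{w_{i}}x_{i}$ is available only because $\tilde{K}$ carries arbitrary real exponents (the $w_{i}$ need not be rational), and it replaces $\mathcal{T}(f)$ by the map $X \mapsto \mathcal{T}(f)(X+w)$; multiplying $f$ by $t^{-c}$ with $c = \min_{a}\deg(c_{a})$ shifts $\mathcal{T}(f)$ by a constant. Neither operation affects whether a point is a zero with nonzero coordinates, nor whether its image lies in the double-min locus, so I may assume $w = 0$, that $\deg(c_{a}) \ge 0$ for all $a$, and that $\deg(c_{a}) = 0$ for at least two indices. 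Let $\bar{f} \in \mathbb{C}[x_{1}^{\pm 1},\ldots,x_{m}^{\pm 1}]$ be the Laurent polynomial obtained by discarding the $a$ with $\deg(c_{a}) > 0$ and replacing each remaining $c_{a}$ by its leading coefficient. Then $\bar{f}$ has at least two terms, so it is not a scalar times a monomial; since the units of $\mathbb{C}[x^{\pm 1}]$ are exactly the scalar multiples of monomials, a non-unit lies in a maximal ideal, i.e.\ the Nullstellensatz gives a point $\bar{p} \in (\mathbb{C}^{*})^{m}$ with $\bar{f}(\bar{p}) = 0$. After pulling out monomial factors and replacing $\bar{f}$ by a squarefree non-monomial factor, I can in addition take $\bar{p}$ to be a smooth torus point of that factor's zero set, so that $\partial\bar{f}/\partial x_{j}(\bar{p}) \ne 0$ for some $j$.

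It then remains to lift $\bar{p}$ to a zero of $f$. Regarding $f$ as a polynomial in $x_{j}$ over $\tilde{K}$ after substituting $x_{i} = \bar{p}_{i}$ for $i \ne j$, I get a one-variable polynomial all of whose coefficients have degree $\ge 0$ and whose reduction to $\mathbb{C}[x_{j}]$ is $\bar{f}(\bar{p}_{1},\ldots,x_{j},\ldots,\bar{p}_{m})$, which has $\bar{p}_{j}$ as a \emph{simple} root by the choice of $\bar{p}$. Since $\tilde{K}$ is complete (indeed maximally complete) and hence Henselian, Hensel's lemma lifts this simple root to a root $p_{j} \in \tilde{K}^{*}$ of degree $0$ with leading coefficient $\bar{p}_{j}$; taking $p_{i} = \bar{p}_{i}$ for $i \ne j$ produces $p \in \textbf{V}(f)$ with all coordinates nonzero and $\mathcal{T}(p) = 0 = w$. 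The main obstacle is this hard inclusion, and within it the passage to a \emph{smooth torus point} of $\bar{f}$: one must handle non-reducedness, spurious monomial factors, and irreducible components contained in coordinate hyperplanes before a simple root is available to feed into Hensel's lemma, and one must check that Hensel's lemma applies to the Hahn series field $\tilde{K}$ and not merely to the Puiseux series. As an alternative to quoting Hensel's lemma, one could construct $p_{j}$ directly as a Hahn series, fixing its leading term via $\bar{f}(\bar{p}) = 0$ and solving for successive coefficients by a Newton-type recursion — which, of course, just reproves Hensel's lemma in this setting.
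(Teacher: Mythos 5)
The paper itself gives no proof of this statement---it is quoted as a result from Kapranov's unpublished manuscript---so your proposal can only be measured against the standard argument in the literature (e.g.\ Maclagan--Sturmfels, Theorem 3.1.3). Your easy inclusion, the normalization to $w=0$ via the substitution $x_{i}\mapsto t^{w_{i}}x_{i}$ (legitimate over Hahn series), and the Nullstellensatz step producing $\bar{p}\in(\mathbb{C}^{*})^{m}$ are all fine. The gap is the sentence claiming you may take $\bar{p}$ with $\partial\bar{f}/\partial x_{j}(\bar{p})\neq 0$ after passing to a squarefree non-monomial factor. If every non-monomial irreducible factor of $\bar{f}$ occurs with multiplicity at least two, no such point exists: take $f=(1+t)x_{1}^{2}-2x_{1}x_{2}+x_{2}^{2}$, for which $w=0$ lies in the double-min locus and $\bar{f}=(x_{1}-x_{2})^{2}$, so every torus zero of $\bar{f}$ is a critical point. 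Replacing $\bar{f}$ by its squarefree factor $g=x_{1}-x_{2}$ does not help, because Hensel's lemma has to be applied to the specialization of $f$ itself, whose reduction is the specialization of $\bar{f}$, not of $g$; at a smooth point of $\textbf{V}(g)$ that reduction has $\bar{p}_{j}$ as a root of multiplicity $\geq 2$ (in the example, $F(x_{2})=(1+t)-2x_{2}+x_{2}^{2}$ reduces to $(x_{2}-1)^{2}$), and the simple-root form of Hensel's lemma simply does not apply. So the lifting step, which is the heart of the theorem, collapses exactly in the non-squarefree case; the actual roots $1\pm i\sqrt{t}$ exist, but your argument cannot produce them.

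The standard repair is to abandon simple-root Hensel in favour of the one-variable version of the theorem proved by the Newton polygon over $\tilde{K}$ (every slope of the Newton polygon is the valuation of a root, with multiplicity the lattice length of the corresponding edge): choose the coordinate $j$ so that two monomials of $f$ whose coefficients have degree zero differ in their $x_{j}$-exponent, specialize the remaining variables at generic complex constants so that no cancellation of leading coefficients occurs, observe that the resulting polynomial in $x_{j}$ has all coefficients of nonnegative degree with the minimum $0$ attained at two distinct powers of $x_{j}$, and extract a root of valuation $0$. This is essentially the textbook induction on the number of variables, and it is where the real content lies; your closing remark that one could instead run a Newton-type recursion is correct in spirit, but at a multiple root that recursion is genuinely more delicate than the simple-root iteration you describe, so as written the proposal has a genuine gap rather than a cosmetic one.
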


Given Kapranov's theorem if $I = (f_{1},\ldots,f_{n})$, then obviously the tropical prevariety determined by the set of tropical polynomials $\{\mathcal{T}(f_{1}),\ldots,\mathcal{T}(f_{n})\}$ contains the tropical variety determined by $I$:
\begin{center}
  $\mathcal{T}(\textbf{V}(I)) \subseteq \bigcap_{i = 1}^{n} \textbf{V}(\mathcal{T}(f_{i}))$.
\end{center}

While Kapranov's theorem gives us the two sets are equal if $n = 1$, in general the containment may be strict. For example, the lines in $(\tilde{K}^{*})^{2}$ defined by the linear equations
\begin{center}
  $f = 2x + y + 1$, \hspace{.1 in} and \hspace{.1 in} $g = tx + ty + 1$,
\end{center}
intersect at the point $(t^{-1}-1,-2t^{-1}+1)$. The tropicalization of this point is $(-1,-1)$, and so if $I = (f,g)$ then
\begin{center}
  $\mathcal{T}(\textbf{V}(I)) = (-1,-1)$.
\end{center}
However, is we tropicalize the linear equations we get:
\begin{center}
  $\mathcal{T}(f) = X \oplus Y \oplus 0$, \hspace{.1 in} and \hspace{.1 in} $\mathcal{T}(g) = 1X \oplus 1Y \oplus 0$.
\end{center}
Each of $\textbf{V}(\mathcal{T}(f))$ and $\textbf{V}(\mathcal{T}(g))$ is a tropical line, and their intersection is the tropical prevariety consisting of all points $(a,a)$ with $a \leq -1$.

\vspace{.1 in}
\begin{tabular}{c}
  \centering
  \hspace{1in}\includegraphics[scale=1]{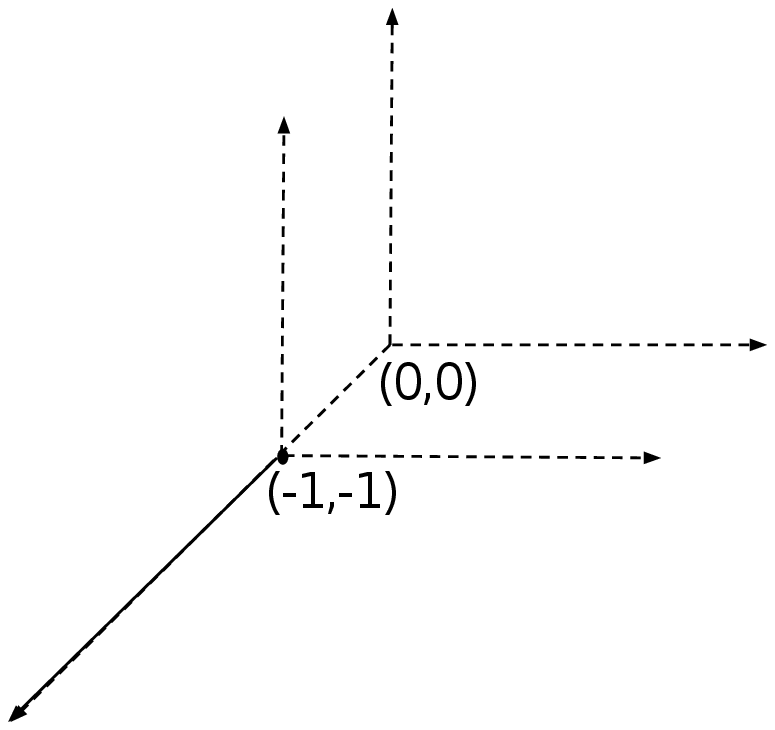}
\end{tabular}

This tropical prevariety properly contains the tropical variety $(-1,-1)$, but the prevariety is clearly much larger. That the intersection of two distinct tropical lines is not necessarily a point is a motivating example of why we do not define a tropical variety to be a finite intersection of tropical hypersurfaces.

\subsection{Kapranov and tropical Rank}

In \cite{dss}, Develin, Santos, and Sturmfels define three notions of matrix rank coming from tropical geometry: the Barvinok rank, the Kapranov rank, and the tropical rank. In this paper we focus on the symmetric analogs of the Kapranov and tropical ranks.

When denoting the element in row $i$ and column $j$ of a matrix these indices will be separated by a comma, so for example $A_{i,j}$ is element $(i,j)$ of the matrix $A$.

The \emph{tropical rank} of an $m \times n$ matrix $A = (A_{i,j}) \in \mathbb{R}^{m \times n}$ is the smallest number $r \leq min(m,n)$ such that $A$ is not in the tropical prevariety formed by the $r \times r$ minors of an $m \times n$ matrix of indeterminates.

A \emph{lift} of the matrix $A$ is a matrix $\tilde{A} = (\tilde{a}_{i,j}) \in (\tilde{K}^{*})^{m \times n}$ such that $deg(\tilde{a}_{i,j}) = A_{i,j}$ for all $i,j$. The \emph{Kapranov rank} of a matrix is the smallest rank of any lift of the matrix. Equivalently, the Kapranov rank is the smallest number $r \leq min(m,n)$ such that $A$ is not in the tropical variety formed by the $r \times r$ minors of an $m \times n$ matrix of indeterminates.

The tropical variety defined by the $r \times r$ minors of an $m \times n$ matrix of indeterminates is contained in the tropical prevariety defined by the same minors, and therefore
\begin{center}
  tropical rank $\leq$ Kapranov rank.
\end{center}

In this paper we define symmetric analogs of the Kapranov and tropical ranks. When we want to emphasize we're referring to the Kapranov and tropical ranks defined here, we will sometimes refer to them as standard Kapranov rank and standard tropical rank.

\subsection{When do the minors of a matrix form a tropical basis?}

A natural question to ask about Kapranov and tropical rank are when they are necessarily equal. In other words, for what values $r,m,n$ does tropical rank $r$ imply Kapranov rank $r$ for any $m \times n$ matrix.

This question was answered through the combined work of Develin, Santos, and Sturmfels \cite{dss}, Chan, Jensen, and Rubei \cite{cjr}, and Shitov \cite{sh2}. The result is named after Shitov \cite{ms}, as he completed the project.

\begin{thm}[Shitov's Theorem]
  The $r \times r$ minors of an $m \times n$ matrix of indeterminates form a tropical basis if and only if:
  \begin{itemize}
    \item $r = min(m,n)$, or
    \item $r \leq 3$, or
    \item $r = 4$ and $min(m,n) \leq 6$.
  \end{itemize}
\end{thm}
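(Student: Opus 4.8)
The plan is to recast the tropical-basis question as a comparison of the two ranks defined above. By the definitions of tropical and Kapranov rank, a matrix $A$ lies in the prevariety of the $r \times r$ minors exactly when its tropical rank is less than $r$, and it lies in the variety of those minors exactly when its Kapranov rank is less than $r$. Since tropical rank $\le$ Kapranov rank always holds, the $r \times r$ minors form a tropical basis if and only if no real matrix $A$ satisfies (tropical rank of $A$)$\,< r \le\,$(Kapranov rank of $A$); that is, if and only if there is no ``rank gap at level $r$.'' So the whole theorem reduces to deciding, for each triple $(r,m,n)$, whether such a gap matrix exists. I would first record the combinatorial handle on each side: tropical rank $< r$ means every $r \times r$ submatrix is tropically singular, i.e.\ the minimum in its permutation expansion $\bigoplus_{\sigma} \bigodot_{i} A_{i,\sigma(i)}$ is attained at least twice, while Kapranov rank $< r$ means $A$ admits a lift over $\tilde{K}$ of honest rank $< r$ (and by Kapranov's theorem this is exactly membership in the tropical variety).

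For the positive direction I would split into the three listed families. When $r = \min(m,n)$ (the maximal minors), I would rule out a gap directly: starting from a matrix all of whose maximal minors are tropically singular, I would build a lift of deficient rank by induction on size together with a perturbation argument that keeps every degree fixed while forcing a genuine linear dependence among the lifted columns. For $r \le 3$ the key is the tree/tropical-convexity structure of low-rank tropical matrices: tropical rank $\le 1$ forces $A_{i,j} = u_i \odot v_j$ and hence an immediate rank-one lift (handling $r = 2$), while tropical rank $\le 2$ forces the columns to lie on a tropical line, enough structure to assemble an explicit rank-two lift coordinate by coordinate (handling $r = 3$). The remaining positive family, $r = 4$ with $\min(m,n) \le 6$, is genuinely harder, and I expect to handle it by a finite, partly computational case analysis: classify the tropically singular $4 \times 4$ patterns that can occur in a $5 \times n$ or $6 \times n$ matrix and verify that the smallness of the short side forces each of them to lift to rank $\le 3$.

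For the negative direction I would produce explicit rank-gap matrices from matroids that are not realizable over a field of characteristic zero. The prototype is the Fano matroid $F_7$, which is realizable only in characteristic two: arranging its cocircuits as a $7 \times 7$ matrix (as hinted in the acknowledgments, these can even be made symmetric) yields a matrix all of whose $4 \times 4$ submatrices are tropically singular for purely combinatorial reasons, so its tropical rank is $3$, whereas any characteristic-zero lift of rank $3$ would realize $F_7$ over $\tilde{K}$, which is impossible; hence its Kapranov rank is $4$. This separates the two ranks at level $r = 4$ and shows the $4 \times 4$ minors of a $7 \times 7$ matrix are not a tropical basis. To cover all of $r \ge 5$ and of $r = 4,\ \min(m,n) \ge 7$, I would iterate and pad this construction, forming block sums with generic blocks and adjoining generic rows and columns to raise $r$ and to enlarge $m$ and $n$ while provably preserving the gap.

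The main obstacle is pinning down the \emph{exact} boundary rather than merely exhibiting failure for large matrices. The matroid constructions comfortably separate the two ranks once $r$, $m$, and $n$ are large, but the theorem asserts a sharp transition, and the single most delicate jump is that the $4 \times 4$ minors \emph{do} form a tropical basis when $\min(m,n) \le 6$ yet \emph{do not} when $\min(m,n) = 7$. Establishing both halves of this one jump is the crux: the positive side must use the smallness of the short dimension to exclude every potential gap matrix, while the negative side (Shitov's contribution) must engineer a counterexample as small as a $7 \times 7$ array whose tropical rank is provably $3$ and whose Kapranov rank is provably $4$. Making the counterexample exactly this small while simultaneously certifying the positive case just below it is where essentially all of the difficulty, and the reason for the combined multi-paper effort, resides.
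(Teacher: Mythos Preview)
The paper does not prove this theorem at all: it is quoted as background, attributed to the combined work of Develin--Santos--Sturmfels, Chan--Jensen--Rubei, and Shitov, and no argument is given. So there is no ``paper's own proof'' to compare against; your proposal is really a sketch of how one might reconstruct the literature.

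That said, your sketch has a genuine gap on the negative side. You propose to obtain every failing case by starting from the $7 \times 7$ Fano cocircuit matrix (tropical rank $3$, Kapranov rank $4$) and then padding and block-summing. Padding enlarges $m,n$ while keeping $r$ fixed, and block-summing with a generic $1\times 1$ block raises both ranks by one but also raises the matrix size by one. Starting from a $7\times 7$ source, this procedure can only ever reach pairs with $\min(m,n)\ge r+3$. It therefore misses, for instance, $r=5$ with $\min(m,n)\in\{6,7\}$, which the theorem asserts are \emph{not} tropical bases. Covering those requires a genuinely new counterexample: a $6\times 6$ matrix with tropical rank $4$ and Kapranov rank $5$. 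That example is exactly Shitov's contribution (the paper cites it as \cite{sh1} and reproduces it in Section~4.1), and it is what makes the negative direction go through for all $r\ge 5$ with $\min(m,n)>r$. You have also misattributed the $7\times 7$ Fano example---that one is due to Develin--Santos--Sturmfels, not Shitov; Shitov's decisive inputs were the $6\times 6$ example just mentioned and, on the positive side, the $r=4$, $\min(m,n)=6$ case. Your identification of the hard boundary is thus only half right: the sharp transition at $r=4$, $\min(m,n)=6$ versus $7$ is indeed delicate, but so is producing a rank gap already at $6\times 6$ for $r=5$, and your plan does not account for the latter.
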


The main result of this paper is a partial analog of this result for symmetric matrices.

\section{Symmetric Kapranov and symmetric tropical rank}

The symmetric Kapranov and symmetric tropcial ranks are defined analogously to their general counterparts.

The \emph{symmetric tropical rank} of an $n \times n$ symmetric matrix $A = (A_{i,j}) \in \mathbb{R}^{n \times n}$ is the smallest number $r \leq n$ such that $A$ is not in the tropical prevariety formed by the $r \times r$ minors of an $n \times n$ symmetric matrix of indeterminates.

The \emph{symmetric Kapranov rank} of an $n \times n$ symmetric matrix $A = (A_{i,j}) \in \mathbb{R}^{n \times n}$ is the smallest rank of any lift to a symmetric matrix. Equivalently, the symmetric Kapranov rank is the smallest number $r \leq n$ such that $A$ is not in the tropical variety formed by the $r \times r$ minors of an $n \times n$ symmetric matrix of indeterminates.

The tropical variety defined by the $r \times r$ minors of an $n \times n$ symmetric matrix of indeterminates is contained in the tropical prevariety defined by the same minors, and therefore
\begin{center}
  symmetric tropical rank $\leq$ symmetric Kapranov rank.
\end{center}

As a lift to an $n \times n$ symmetric matrix is a lift to an $n \times n$ matrix, we must have
\begin{center}
  Kapranov rank $\leq$ symmetric Kapranov rank.
\end{center}
In the next subsection, we see the symmetric Kapranov rank can be greater than the standard Kapranov rank, and investigate this in the context of tropical conics.

\subsection{Singular tropical conics}

In classical algebraic geometry a quadric is a hypersurface in $\mathbb{P}^{n-1}$ defined by a homogeneous polynomial $f \in k[x_{1},\ldots,x_{n}]$ of degree two
\begin{center}
  $f = a_{11}x_{1}^{2} + a_{12}x_{1}x_{2} + \cdots + a_{1n}x_{1}x_{n} + a_{22}x_{2}^{2} + a_{23}x_{2}x_{3} + \cdots + a_{nn}x_{n}^{2}$.
\end{center}
For each such polynomial there is a corresponding symmetric matrix, $A$, defined by the relations
\begin{center}
  $f = \left(\begin{array}{cccc} x_{1} & x_{2} & \cdots & x_{n}\end{array}\right)\left(\begin{array}{cccc} a_{11} & \frac{1}{2}a_{12} & \cdots & \frac{1}{2}a_{1n} \\ \frac{1}{2}a_{12} & a_{22} & \cdots & \frac{1}{2}a_{2n} \\ \vdots & \vdots & \ddots & \vdots \\ \frac{1}{2}a_{1n} & \frac{1}{2}a_{2n} & \cdots & a_{nn} \end{array}\right)\left(\begin{array}{c} x_{1} \\ x_{2} \\ \vdots \\ x_{n} \end{array}\right) = \textbf{x}^{T}A\textbf{x}$.
\end{center}
The hypersurface $\textbf{V}(f)$ is singular if and only if the corresponding symmetric matrix is singular, and the rank of a quadric is defined to be the rank of the corresponding symmetric matrix.

Quadrics in $\mathbb{P}^{2}$ are called \emph{conics}, and it's a standard result that a singular conic is the union of two lines. If we rename our variables $x,y,z$, and restrict to the affine subspace given by $z = 1$, we can define a conic to be the hypersurface \textbf{V}(g) of the polynomial
\begin{center}
  $g = ax^{2}+bxy+cy^{2}+dx+ey+f$
\end{center}
with corresponding symmetric matrix
\begin{center}
  $f = \left(\begin{array}{ccc} x & y & 1\end{array}\right)\left(\begin{array}{ccc} a & \frac{1}{2}b & \frac{1}{2}d \\ \frac{1}{2}b & c & \frac{1}{2}e \\ \frac{1}{2}d & \frac{1}{2}e & f \end{array}\right)\left(\begin{array}{c} x \\ y \\ 1 \end{array}\right)$.
\end{center}

The tropicalization of $g$ is the tropical polynomial
\begin{center}
  $G = AX^{2} \oplus BXY \oplus CY^{2} \oplus DX \oplus EY \oplus F$
\end{center}
where the coefficients, variables, and operations are, of course, all replaced by their tropical counterparts. The tropical hypersurface \textbf{V}(G) is a \emph{tropical conic}. The corresponding symmetric matrix is
\begin{center}
  $\left(\begin{array}{ccc} A & B & D \\ B & C & E \\ D & E & F \end{array}\right)$
\end{center}

Now, let's investigate two different tropical conics. The first is the tropical conic defined by
\begin{center}
  $G_{1} = 1X^{2} \oplus XY \oplus 1Y^{2} \oplus X \oplus Y \oplus 0$,
\end{center}
which has corresponding symmetric matrix
\begin{center}
  $C_{1} = \left(\begin{array}{ccc} 1 & 0 & 0 \\ 0 & 1 & 0 \\ 0 & 0 & 0 \end{array}\right)$.
\end{center}
This matrix has less than full Kapranov rank, and indeed we can easily find a lift to a singular matrix over $\tilde{K}$
\begin{center}
  $\left(\begin{array}{ccc} t & 1 & 1+t \\ 1 & t & 1+t \\ 1+t & 1+t & 2+2t \end{array}\right)$.
\end{center}
The tropical conic $\textbf{V}(G_{1})$ is the union of two tropical lines, and so it makes sense to view this as a singular tropical conic.

\vspace{.1 in}
\begin{tabular}{c}
  \centering
  \hspace{1in}\includegraphics[scale=1]{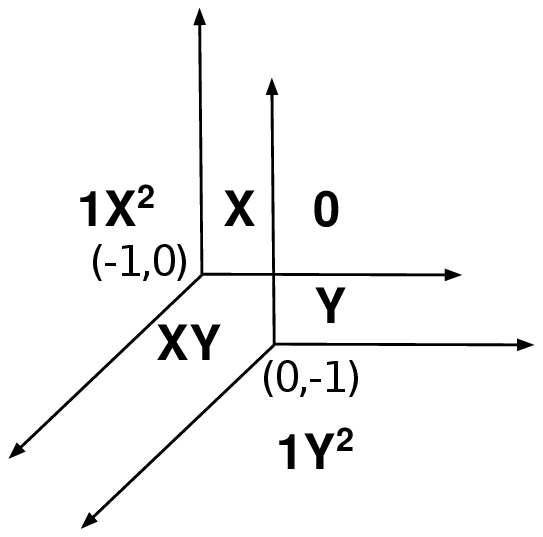}
\end{tabular}

On the other hand, the tropical conic defined by the tropical polynomial
\begin{center}
  $G_{2} = 1X^{2} \oplus XY \oplus 1Y^{2} \oplus X \oplus Y \oplus 0$
\end{center}
has corresponding symmetric matrix
\begin{center}
  $C_{2} = \left(\begin{array}{ccc} 1 & 0 & 0 \\ 0 & 1 & 0 \\ 0 & 0 & 1 \end{array}\right)$.
\end{center}
This matrix also has less than full Kapranov rank, and we can find a lift to a singular matrix over $\tilde{K}$
\begin{center}
  $\left(\begin{array}{ccc} t & 1 & 1+t \\ 1 & t & 1+t \\ 1+t & -1 & t \end{array}\right)$.
\end{center}
However, the tropical conic $\textbf{V}(G_{2})$ is clearly \emph{not} the union of two tropical lines, and so it would be odd to call it singular.

\vspace{.1 in}
\begin{tabular}{c}
  \centering
  \hspace{1in}\includegraphics[scale=1]{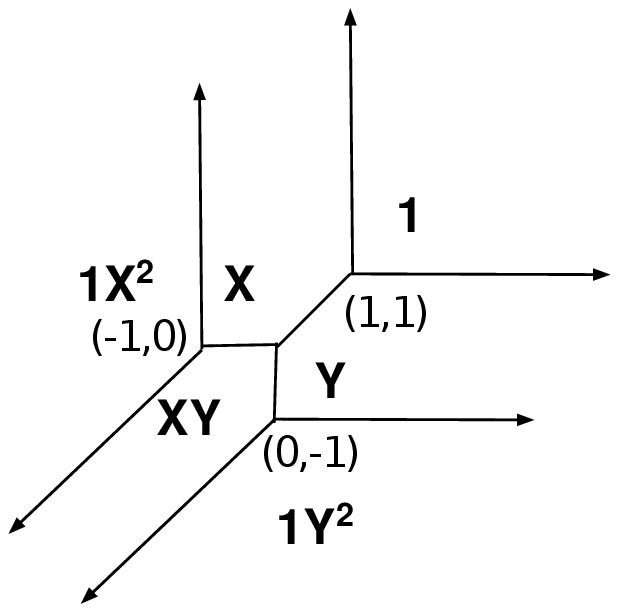}
\end{tabular}

The critical distinction here is the lift of $C_{1}$ is symmetric, while the lift of $C_{2}$ is not, and that's not just because of our particular choice of lifts. It's impossible to find a singular, symmetric lift of $C_{2}$, and so while it has less than full Kapranov rank, it has full symmetric Kapranov rank, and $C_{2}$ is an example of a symmetric matrix with symmetric Kapranov rank greater than its Kapranov rank. 

To prove this about $C_{2}$, for the sake of contradiction suppose $C_{2}$ has a lift to a symmetric rank two matrix 
\begin{center}
  $\tilde{C}_{2} := \left(\begin{array}{ccc}c_{1,1}t + \cdots & c_{1,2} + \cdots & c_{1,3} + \cdots \\ c_{1,2} + \cdots & c_{2,2}t + \cdots & c_{2,3} + \cdots \\ c_{1,3} + \cdots & c_{2,3} + \cdots & c_{3,3}t + \cdots \end{array}\right)$,
\end{center}
where $c_{i,j} \in \mathbb{C}$.
If the first column of $\tilde{C}_{2}$ were a $\tilde{K}$-multiple of the second,
\begin{center}
  $\tilde{\textbf{c}_{1}} = \tilde{k}\tilde{\textbf{c}}_{2}$,
\end{center}
then the relation from the first row
\begin{center}
  $c_{1,1}t + \cdots = \tilde{k}(c_{1,2} + \cdots)$
\end{center}
would require $deg(\tilde{k}) = 1$, while the relation from the second row
\begin{center}
  $c_{1,2} + \cdots = \tilde{k}(c_{2,2}t + \cdots)$
\end{center}
would require $deg(\tilde{k}) = -1$. This cannot be, and so the second column of $\tilde{C}_{2}$ is linearly independent of the first.
As the first two columns are linearly independent, if $\tilde{C}_{2}$ has rank two there must be a linear combination of the first two columns equal to the third
\begin{center}
  $\tilde{k}_{1}\tilde{\textbf{c}}_{1} + \tilde{k}_{2}\tilde{\textbf{c}}_{2} = \tilde{\textbf{c}}_{3}$.
\end{center}
Explicitly, this is the three equalities:
\begin{center}
  $\tilde{k}_{1}(c_{1,1}t + \cdots) + \tilde{k}_{2}(c_{1,2} + \cdots) = c_{1,3} + \cdots$;  

  $\tilde{k}_{1}(c_{1,2} + \cdots) + \tilde{k}_{2}(c_{2,2}t + \cdots) = c_{2,3} + \cdots$;
 
  $\tilde{k}_{1}(c_{1,3} + \cdots) + \tilde{k}_{2}(c_{2,3} + \cdots) = c_{3,3}t + \cdots$.
\end{center}
If $deg(\tilde{k}_{2}) < deg(\tilde{k}_{1})$ then from the first equality we must have $deg(\tilde{k}_{2}) = 0$, but this would make the third equality impossible. If $deg(\tilde{k}_{1}) < deg(\tilde{k}_{2})$ then from the second equality we must have $deg(\tilde{k}_{1}) = 0$, but this would also make the third equality impossible. If $deg(\tilde{k}_{1}) = deg(\tilde{k}_{2})$ then from the first equality (or the second) we must have $deg(\tilde{k}_{1}) = deg(\tilde{k}_{2}) = 0$.
Suppose $deg(\tilde{k}_{1}) = deg(\tilde{k}_{2}) = 0$, and denote the leading terms of $\tilde{k}_{1}$ and $\tilde{k}_{2}$ by, respectively, $k_{1}$ and $k_{2}$. Then the first, second, and third equalities above, respectively, require:
\begin{center}
  $k_{2}c_{1,2} = c_{1,3}$;

  $k_{1}c_{1,2} = c_{2,3}$;

  $k_{1}c_{1,3} = -k_{2}c_{2,3}$.
\end{center}
Substituting the first of these equalities into the left side of the third, and the second into the right side of the third, we derive the equality
\begin{center}
  $k_{1}k_{2}c_{1,2} = -k_{1}k_{2}c_{1,2}$.
\end{center}
This cannot be as neither $k_{1},k_{2}$, nor $c_{1,2}$ is $0$. So, $C_{2}$ has no rank two symmetric lift, and its symmetric Kapranov rank is three.

Generally, a tropical conic will be the union of two tropical lines if and only if its corresponding symmetric matrix has less than full symmetric Kapranov rank, and so the symmetric ranks are the ones that should be used to determine whether a tropical conic is singular. This generalizes to tropical quadrics, although we won't investigate those here.

\subsection{Basic properties}

A square matrix $A = (A_{i,j}) \in \mathbb{R}^{n \times n}$ is \emph{tropically singular} if the minimum
  \begin{center}
    $tropdet(A) := \bigoplus_{\sigma \in S_{n}} A_{1,\sigma(1)} \odot A_{2,\sigma(2)} \odot \cdots \odot A_{d,\sigma(n)}$
  \end{center}
  is attained at least twice is the tropical sum. Here $S_{n}$ denotes the symmetric group on $\{1,2,\ldots,n\}$. We call the number $tropdet(A)$ defined above the \emph{tropical determinant} of $A$, and we say any permutation $\sigma$ such that
  \begin{center}
    $tropdet(A) = A_{1,\sigma(1)} \odot A_{2,\sigma(2)} \odot \cdots \odot A_{d,\sigma(d)}$
  \end{center}
  \emph{realizes} the tropical determinant. So, equivalently, a square matrix $A$ is tropically singular if more than one permutation realizes the tropical determinant.

More generally, suppose $A$ is an $m \times n$ real matrix, and $\{i_{1},i_{2},\ldots,i_{r}\}$ and $\{j_{1},j_{2},\ldots,j_{r}\}$ are subsets of $\{1,\ldots,m\}$ and $\{1,\ldots,n\}$, respectively. These subsets define an $r \times r$ submatrix $A'$ of $A$, with row indices $\{i_{1},\ldots,i_{r}\}$ and column indices $\{j_{1},\ldots,j_{r}\}$. A tropical monomial of the form
\begin{center}
  $\bigodot_{k = 1}^{r}X_{i_{k},\rho(i_{k})}$,
\end{center}
where $\rho$ is a bijection from the row indices to the column indices, is a \emph{minimizing monomial} for the submatrix $A'$ if, over all monomials defined by bijections from $\{i_{1},i_{2},\ldots,i_{r}\}$ to $\{j_{1},j_{2},\ldots,j_{r}\}$, this monomial is minimal under the valuation $X_{i,j} \mapsto A_{i,j}$. An $r \times r$ submatrix of $A$ is tropically singular if it has more than one minimizing monomial.

For symmetric matrices, we say a submatrix is \emph{symmetrically tropically singular} if it has more than one minimizing monomial given the equivalence $X_{i,j} = X_{j,i}$. 

For example, the tropical determinant of a $3 \times 3$ matrix of indeterminates 
\begin{center}
  $\left(\begin{array}{ccc} X_{1,1} & X_{1,2} & X_{1,3} \\ X_{2,1} & X_{2,2} & X_{2,3} \\ X_{3,1} & X_{3,2} & X_{3,3} \end{array}\right)$
\end{center}
is
\begin{center}
  $X_{1,1}X_{2,2}X_{3,3} \oplus X_{1,2}X_{2,3}X_{3,1} \oplus X_{1,3}X_{2,1}X_{3,2} \oplus X_{1,1}X_{2,3}X_{3,2} \oplus X_{1,2}X_{2,1}X_{3,3} \oplus X_{1,3}X_{2,2}X_{3,1}$.
\end{center}
For the matrix $C_{2}$ from the previous subsection, there are two minimizing monomials $X_{1,2}X_{2,3}X_{3,1}$ and $X_{1,3}X_{2,1}X_{3,2}$, and so the matrix is tropically singular. However, under the equivalence $X_{i,j} = X_{j,i}$ the tropical determinant becomes
\begin{center}
  $X_{1,1}X_{2,2}X_{3,3} \oplus X_{1,2}X_{2,3}X_{1,3} \oplus X_{1,1}X_{2,3}^{2} \oplus X_{1,2}^{2}X_{3,3} \oplus X_{1,3}^{2}X_{2,2}$,
\end{center}
and for the matrix $C_{2}$ the monomial $X_{1,2}X_{2,3}X_{1,3}$ is the unique minimizing monomial. Therefore, $C_{2}$ is not symmetrically tropically singular.

The tropical rank of a matrix can be equivalently defined as the smallest value of $r$ such that the matrix has a tropically nonsingular $r \times r$ submatrix, and similarly the symmetric tropical rank of a symmetric matrix can be equivalently defined as the smallest value of $r$ such that the symmetric matrix has a symmetrically tropically nonsingular $r \times r$ submatrix. 

If $S_{n}$ is the set of permutations of $n$ elements, we define an equivalence class on the elements of $S_{n}$ by declaring two permutations to be in the same class if they have the same disjoint cycle decomposition up to inversion of the cycles. In other words, if $\sigma$ is a permutation with disjoint cycle decomposition:
\begin{center}
  $\sigma = \sigma_{1}\sigma_{2} \cdots \sigma_{k}$,
\end{center}
where the $\sigma_{i}$ are disjoint cycles, then the other elements in its equivalence class are of the form:
\begin{center}
  $\sigma_{1}^{\pm}\sigma_{2}^{\pm} \cdots \sigma_{k}^{\pm}$.
\end{center}
For example, in $S_{7}$ the permutations $(1257)(346)$, $(1752)(346)$, $(1257)(364)$, $(1752)(364)$ would all be in the same equivalence class.

Note that as the parity of a permutation is determined completely by the sizes of the cycles in its disjoint cycle decomposition, and as a cycle and its inverse have the same size, every element in a given equivalence class has the same parity.

Denote by $\tilde{S}_{n}$ this equivalence class of permutations in $S_{n}$. If two permutations are in the same equivalence class they are \emph{cycle-similar}, and if not they are \emph{cycle-distinct}. Denote the equivalence class containing the permutation $\sigma$ by $\tilde{\sigma}$.

\newtheorem{prop}{Proposition}
\begin{prop} 
  A symmetric matrix is symmetrically tropically singular if and only if it has two cycle-distinct permutations that realize the determinant.
\end{prop}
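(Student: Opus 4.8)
The plan is to reduce the proposition to the following purely combinatorial lemma: for $\sigma, \tau \in S_n$, the monomials $\bigodot_{i=1}^{n} X_{i,\sigma(i)}$ and $\bigodot_{i=1}^{n} X_{i,\tau(i)}$ coincide under the equivalence $X_{i,j} = X_{j,i}$ if and only if $\sigma$ and $\tau$ are cycle-similar. Granting this, the proposition follows at once. Since $A$ is symmetric, the valuation $X_{i,j} \mapsto A_{i,j}$ descends to monomials modulo $X_{i,j} = X_{j,i}$, and the value of the monomial attached to $\sigma$ is the weight $A_{1,\sigma(1)} \odot \cdots \odot A_{n,\sigma(n)}$; as every permutation contributes a monomial and every such monomial comes from a permutation, the minimum value over all these monomials equals $tropdet(A)$. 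Hence $A$ is symmetrically tropically singular precisely when two distinct minimizing monomials occur, and by the lemma this happens precisely when two cycle-distinct permutations both realize $tropdet(A)$: for one direction, pick a permutation representing each of two distinct minimizing monomials; for the other, note that two cycle-distinct permutations realizing $tropdet(A)$ have distinct monomials, by the contrapositive of the lemma.

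For the lemma, the ``if'' direction is immediate: inverting a cycle $(a_1 a_2 \cdots a_k)$ of $\sigma$ turns the ordered pairs $(a_1,a_2),(a_2,a_3),\ldots,(a_k,a_1)$ into $(a_1,a_k),(a_k,a_{k-1}),\ldots,(a_2,a_1)$, and these two lists give the same multiset of unordered pairs, while fixed points are untouched; so cycle-similar permutations give the same symmetric monomial.

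The ``only if'' direction is the crux. I would encode the symmetric monomial of $\sigma$ as the multiset of unordered pairs $E(\sigma) = \{\{i,\sigma(i)\} : 1 \le i \le n\}$, regarded as a multigraph on $\{1,\ldots,n\}$ in which a fixed point of $\sigma$ is a loop and a transposition is a doubled edge. Each vertex $i$ occurs in exactly the pairs $\{i,\sigma(i)\}$ and $\{\sigma^{-1}(i),i\}$, so the multigraph is $2$-regular (a loop counted twice) and is therefore a disjoint union of loops, doubled edges, and cycles of length $\ge 3$; these components depend only on $E(\sigma)$. Finally I recover $\sigma$ from each component up to inversion: a loop at $i$ forces $\sigma(i)=i$; the two copies of a doubled edge $\{a,b\}$ must be $\{a,\sigma(a)\}$ and $\{b,\sigma(b)\}$, forcing $\sigma(a)=b$ and $\sigma(b)=a$; and around a cycle $i_0 - i_1 - \cdots - i_{k-1} - i_0$ with $k \ge 3$, choosing $\sigma(i_0)$ to be one of its two neighbours forces all remaining values, producing exactly the two inverse orientations. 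Hence $E(\sigma) = E(\tau)$ forces $\sigma$ and $\tau$ to coincide on each component up to inversion, that is, to be cycle-similar.

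The only genuine obstacle is this last step --- ruling out that the unordered-pair multiset could be reassembled in some way other than by re-orienting the cycles of $\sigma$ --- and the $2$-regularity observation together with the one-line checks for loops and doubled edges disposes of it.
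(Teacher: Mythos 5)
Your proof is correct, and it rests on the same key lemma as the paper: under the identification $X_{i,j}=X_{j,i}$, two permutations give the same monomial if and only if they are cycle-similar, after which the statement about minimizing monomials and realizing permutations follows formally. The ``if'' direction is the same in both treatments. Where you differ is in the ``only if'' direction: the paper fixes a cycle $(k_1 k_2 \cdots k_m)$ of $\tau_1$ and chases variables inside $x_{\tau_2}$ (if $x_{k_1,k_2}$ is absent then $x_{k_2,k_1}$ must appear, hence $x_{k_3,k_2}$, and so on around the cycle), concluding that each cycle of $\tau_1$ or its inverse occurs in $\tau_2$. You instead classify: you encode the symmetric monomial as the multiset of unordered pairs $E(\sigma)=\{\{i,\sigma(i)\}\}$, observe that this multigraph is $2$-regular, decompose it into loops, doubled edges, and longer cycles, and show that each component determines the permutation up to orientation of the cycle. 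The two arguments reconstruct the same information, but your version has the advantage of being a characterization of \emph{all} permutations with a given monomial rather than a comparison of two, and it makes the degenerate cases explicit --- fixed points (loops), transpositions (doubled edges, where the variable occurs with multiplicity two), and the counting step that prevents an edge of a long cycle from being used twice --- points the paper's variable chase passes over more quickly. The one place where you compress is the assertion that choosing $\sigma(i_0)$ forces the rest of the orientation around a cycle of length at least three; this does require the multiplicity-matching observation (each edge of the component is contributed exactly once by its $k$ vertices), which you gesture at but could state in one additional sentence.
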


\begin{proof}
  Consider the symmetric matrix of variables:
  \begin{center}
    $X := \left(\begin{array}{ccccc} x_{1,1} & x_{1,2} & x_{1,3} & \cdots & x_{1,n} \\ x_{1,2} & x_{2,2} & x_{2,3} & \cdots & x_{2,n} \\ x_{1,3} & x_{2,3} & x_{3,3} & \cdots & x_{3,n} \\ \vdots & \vdots & \vdots & \ddots & \vdots \\ x_{1,n} & x_{2,n} & x_{3,n} & \cdots & x_{n,n} \end{array}\right)$.
  \end{center} 
  For any cycle
  \begin{center}
    $\sigma = (k_{1}k_{2} \cdots k_{m})$
  \end{center}
  define the monomial
  \begin{center}
    $x_{\sigma} = x_{k_{1},k_{2}}x_{k_{2},k_{3}} \cdots x_{k_{m},k_{1}}$,
  \end{center}
  and for any permutation $\tau \in S_{n}$ with disjoint cycle decomposition 
  \begin{center}
    $\tau = \sigma_{1}\sigma_{2} \cdots \sigma_{k}$
  \end{center}
  define the monomial
  \begin{center}
    $x_{\tau} = \prod_{i= 1}^{n} x_{i,\tau(i)} = \prod_{i = 1}^{k}x_{\sigma_{i}}$.
  \end{center}
  We have
  \begin{center}
    $x_{\sigma} = x_{k_{1},k_{2}}x_{k_{2},k_{3}} \cdots x_{k_{m},k_{1}}$, \hspace{.1 in} and \hspace{.1 in} $x_{\sigma^{-1}} = x_{k_{1},k_{m}} \cdots k_{k_{3},k_{2}}x_{k_{2},k_{1}}$.
  \end{center}
  As $x_{i,j} = x_{j,i}$ we see $x_{\sigma} = x_{\sigma^{-1}}$, and therefore for any two cycle-similar permutations $\tau_{1}$ and $\tau_{2}$ we must have $x_{\tau_{1}} = x_{\tau_{2}}$. In other words, the permutations $\tau_{1}$ and $\tau_{2}$ produce the same monomial in the determinant of $X$. Note that as $\tau_{1}$ and $\tau_{2}$ have the same parity the monomials $x_{\tau_{1}}$ and $x_{\tau_{2}}$ have the same sign in the determinant, and there is no concern about identical monomials cancelling. 
  
  On the other hand, suppose for two distinct permutations $\tau_{1}$ and $\tau_{2}$ that, given $x_{i,j} = x_{j,i}$, we have $x_{\tau_{1}} = x_{\tau_{2}}$. The permutation $\tau_{1}$ will have some disjoint cycle decomposition
  \begin{center}
    $\tau_{1} = \sigma_{1}\sigma_{2} \cdots \sigma_{t}$.
  \end{center}
  Suppose
  \begin{center}
    $\sigma_{1} = (k_{1}k_{2} \cdots k_{m})$.
  \end{center}
  This means the variables
  \begin{center}
    $x_{k_{1},k_{2}}x_{k_{2},k_{3}} \cdots x_{k_{m},k_{1}}$
  \end{center}
  appear in the product of variables defining the monomial $x_{\tau_{1}}$. If every one of these variables also appear in $x_{\tau_{2}}$, then the cycle $\sigma_{1}$ also appears in the disjoint cycle decomposition of $\tau_{2}$. If this is the case for every cycle in the cycle decomposition of $\tau_{1}$, then $\tau_{1} = \tau_{2}$. 
  
  So, assume without loss of generality that $\sigma_{1}$ is not in the disjoint cycle decomposition of $\tau_{2}$, and the variable $x_{k_{1},k_{2}}$ does not appear in $x_{\tau_{2}}$. As the only relation between the variables is $x_{i,j} = x_{j,i}$, if $x_{k_{1},k_{2}}$ does not appear in $x_{\tau_{2}}$, then $x_{k_{2},k_{1}}$ must. This means $x_{k_{2},k_{3}}$ cannot appear in $x_{\tau_{2}}$, and so $x_{k_{3},k_{2}}$ must. Repeating this argument we see that the product of variables
  \begin{center}
    $x_{k_{2},k_{1}}x_{k_{3},k_{2}} \cdots x_{k_{m},k_{m-1}}x_{k_{1},k_{m}}$
  \end{center}
  must appear in $x_{\tau_{2}}$, which means $\tau_{2}$ must contain in its disjoint cycle decomposition the cycle
  \begin{center}
    $(k_{m}k_{m-1} \cdots k_{1}) = (k_{1}k_{2} \cdots k_{s})^{-1}$.
  \end{center}
  So, for every cycle in the disjoint cycle decomposition of $\tau_{1}$ either that cycle or its inverse appears in $\tau_{2}$, and obviously vice-versa. Ergo, $\tau_{1}$ and $\tau_{2}$ are cycle-similar. 
  
  From this we conclude the distinct monomials occuring in the determinant of $X$ are the cycle-distinct monomials, and therefore a symmetric matrix is symmetrically tropically singular if and only if it has two cycle-distinct permutations that realize the determinant.
\end{proof}

If an $r \times r$ submatrix of a symmetric $n \times n$ matrix has two distinct minimizing monomials given the equivalence $X_{i,j} = X_{j,i}$ then a fortiori it has two distinct minimizing monomials without that equivalence, and so
\begin{center}
  tropical rank $\leq$ symmetric tropical rank.
\end{center}
The matrix $C_{2}$ is an example of a matrix with tropical rank less than its symmetric tropical rank.

One situation where tropical rank and symmetric tropical rank are necessarily equal is when both are one.

\begin{prop}
  A symmetric matrix $A$ has tropical rank one if and only if it has symmetric tropical rank one.
\end{prop}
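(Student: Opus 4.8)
The plan is to prove the two implications separately. The implication ``symmetric tropical rank one $\Rightarrow$ tropical rank one'' is essentially free: since tropical rank $\leq$ symmetric tropical rank, and since every matrix has tropical rank at least one (each $1 \times 1$ submatrix has a unique minimizing monomial, hence is tropically nonsingular), symmetric tropical rank one forces tropical rank one. So the real work is the converse, where the point is that having tropical rank one is an extremely rigid condition.

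Suppose then that $A$ has tropical rank one, so that every $2 \times 2$ submatrix of $A$ is tropically singular. Fix indices $p$ and $q$. Applying tropical singularity to the submatrix with rows $\{i, p\}$ and columns $\{j, q\}$ gives $A_{i,j} + A_{p,q} = A_{i,q} + A_{p,j}$, and hence $A_{i,j} = u_i + v_j$ for all $i, j$, where $u_i := A_{i,q} - A_{p,q}$ and $v_j := A_{p,j}$ (the cases $i = p$ and $j = q$ being immediate). Now invoke the symmetry of $A$: from $A_{i,j} = A_{j,i}$ we get $u_i + v_j = u_j + v_i$, so $u_i - v_i$ does not depend on $i$. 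Call this common value $c$, put $w_i := v_i + c/2$, and note $A_{i,j} = u_i + v_j = v_i + v_j + c = w_i + w_j$ for all $i, j$.

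It remains to produce a symmetric rank-one lift. Let $\tilde{A} \in (\tilde{K}^{*})^{n \times n}$ have entries $\tilde{A}_{i,j} := t^{w_i + w_j}$. This matrix is symmetric, satisfies $deg(\tilde{A}_{i,j}) = w_i + w_j = A_{i,j}$, and factors as $\tilde{A} = \tilde{\textbf{w}}\,\tilde{\textbf{w}}^{T}$ with $\tilde{\textbf{w}} := (t^{w_1}, \ldots, t^{w_n})^{T} \neq \textbf{0}$, so it has rank one. Hence the symmetric Kapranov rank of $A$ is at most one; and since any symmetric lift lies in $(\tilde{K}^{*})^{n \times n}$, it is nonzero and has rank at least one, so the symmetric Kapranov rank is exactly one. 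Since symmetric tropical rank $\leq$ symmetric Kapranov rank $= 1$ and is likewise at least one, $A$ has symmetric tropical rank one.

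The only step that needs any care is deducing $A_{i,j} = w_i + w_j$ from the symmetry of $A$: one must check that the outer-product vectors $u$ and $v$ arising from an arbitrary pivot differ by a global constant, and then split that constant symmetrically across the two indices so that a single vector $w$ suffices — but this is routine bookkeeping. An alternative, lift-free route is to observe that for every $2 \times 2$ submatrix of a symmetric matrix the two determinant monomials $X_{i,j}X_{k,l}$ and $X_{i,l}X_{k,j}$ remain distinct even after imposing $X_{a,b} = X_{b,a}$ (a one-line check on unordered index pairs, including the principal case where these monomials are $X_{i,i}X_{k,k}$ and $X_{i,k}^{2}$), so that ``tropically singular'' and ``symmetrically tropically singular'' coincide for $2 \times 2$ submatrices; the proposition follows at once.
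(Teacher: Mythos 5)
Your proposal is correct, but your primary route is genuinely different from the paper's. For the converse you derive the additive decomposition $A_{i,j} = w_{i} + w_{j}$ from tropical rank one plus symmetry, and then exhibit the explicit symmetric rank-one lift $\tilde{A}_{i,j} = t^{w_{i}+w_{j}}$; this proves the stronger fact that $A$ has symmetric Kapranov rank one, and you then descend through the inequality symmetric tropical rank $\leq$ symmetric Kapranov rank. The paper never leaves the tropical side: its whole argument is the observation that the two monomials of any $2 \times 2$ minor of a symmetric matrix of variables (diagonal product versus off-diagonal product) remain distinct even under the identification $X_{i,j} = X_{j,i}$, so for $2 \times 2$ submatrices ``tropically singular'' and ``symmetrically tropically singular'' coincide, and tropical rank one transfers immediately to symmetric tropical rank one --- which is precisely the lift-free alternative you sketch in your closing sentences, so in effect you have given both proofs. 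What your main argument buys is more than the proposition asks: the decomposition-and-lift computation is essentially the content of the paper's Theorem 5 (symmetric tropical rank one iff symmetric Kapranov rank one, i.e.\ the $2 \times 2$ minors are a tropical basis), which the paper proves later by a very similar construction; the cost is that it invokes the Kapranov-rank machinery, whereas the paper's one-sentence monomial count is more elementary and is exactly what this proposition needs. Both directions of your easy implication, and the bookkeeping with $u_{i}$, $v_{j}$, and the constant $c$, check out.
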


\begin{proof}
  Rank one is the minimum possible rank. As tropical rank cannot be greater than symmetric tropical rank, symmetric tropical rank one implies tropical rank one.
  
  The determinant of a $2 \times 2$ submatrix of a symmetric $n \times n$ matrix of variables is the difference of two monomials, the product of the diagonal terms, and the product of the off-diagonal terms, and these monomials cannot be the same even under the equivalence $X_{i,j} = X_{j,i}$. If a matrix has tropical rank one, then for every $2 \times 2$ submatrix the sum of the diagonal terms equals the sum of the off-diagonal terms. This means every $2 \times 2$ submatrix is symmetrically tropically singular, and the matrix has symmetric tropical rank one. 
\end{proof}

\newtheorem{cor}{Corollary}
\begin{cor}
  If a symmetric matrix has symmetric tropical rank two then it has tropical rank two.
\end{cor}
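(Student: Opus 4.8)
The plan is to read this off from Proposition 2 together with the inequality $\text{tropical rank} \le \text{symmetric tropical rank}$ established just above. Suppose $A$ is a symmetric matrix of symmetric tropical rank two. From that inequality the tropical rank of $A$ is at most two, and since (as recorded in the proof of Proposition 2) one is the smallest possible rank, the tropical rank of $A$ is either one or two.

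The only remaining step is to exclude tropical rank one, and this is precisely the contrapositive of Proposition 2: that proposition says a symmetric matrix has tropical rank one exactly when it has symmetric tropical rank one, so a symmetric matrix of symmetric tropical rank two cannot have tropical rank one. Hence the tropical rank of $A$ must be two.

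There is no real obstacle here: the substantive content is already packaged in Proposition 2, and this corollary is just that proposition in contrapositive form plus the order relation between the two ranks. If anything needs a second glance it is the bookkeeping at the bottom of the scale --- confirming that ``rank at most two and not equal to one'' leaves only ``rank exactly two'' --- which relies on nothing beyond the fact, used already in Proposition 2, that the ranks here are at least one.
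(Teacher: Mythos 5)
Your proof is correct and follows essentially the same route as the paper: bound the tropical rank above by the symmetric tropical rank, then rule out tropical rank one via the equivalence in Proposition 2 (used in contrapositive). Nothing further is needed.
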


\begin{proof}
  The tropical rank cannot be greater the symmetric tropical rank, and by the above proposition if the tropical rank were one, the symmetric tropical rank would be one as well. So, the tropical rank must be two.
\end{proof}

The matrix $C_{2}$ has tropical rank two but greater symmetric tropical rank. The form of $C_{2}$ is, essentially, the only way this is possible.

\begin{prop}
  A real symmetric matrix of tropical rank two has greater symmetric tropical rank if and only if a principal $3 \times 3$ submatrix is not symmetrically tropically singular.
\end{prop}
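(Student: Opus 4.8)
The plan is to reduce everything to one combinatorial lemma about $3 \times 3$ blocks. \emph{Lemma}: if $A[I,J]$ is a $3 \times 3$ submatrix of a symmetric matrix whose row index set $I$ and column index set $J$ satisfy $I \neq J$, then the six monomials $\prod_{k} X_{i_{k},\rho(i_{k})}$ coming from the bijections $\rho : I \to J$ are pairwise distinct even after imposing $X_{i,j} = X_{j,i}$; equivalently, such a block is symmetrically tropically singular exactly when it is tropically singular in the ordinary sense. Granting the lemma, the ``if'' direction is immediate: a principal $3 \times 3$ submatrix that is not symmetrically tropically singular is a symmetrically tropically nonsingular $3 \times 3$ submatrix, so the symmetric tropical rank is at least $3$ and hence greater than $2$. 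For the ``only if'' direction, suppose the symmetric tropical rank exceeds $2$. By the submatrix description of symmetric tropical rank, some $3 \times 3$ submatrix $A[I,J]$ is symmetrically tropically nonsingular. Since $A$ has tropical rank two, every $3 \times 3$ submatrix of $A$ is ordinarily tropically singular, so $A[I,J]$ has at least two minimizing bijections. If $I \neq J$, the lemma forces these bijections to produce at least two distinct monomials under $X_{i,j} = X_{j,i}$, contradicting symmetric tropical nonsingularity; hence $I = J$, and $A[I,J]$ is a principal $3 \times 3$ submatrix that is not symmetrically tropically singular.

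To prove the lemma I would peel off an index lying outside the overlap. Since $|I| = |J| = 3$ and $I \neq J$, choose $i^{*} \in I \setminus J$. For any bijection $\rho : I \to J$ the index $i^{*}$ is not in the range of $\rho$, so among the three unordered pairs $\{i,\rho(i)\}$ forming the monomial of $\rho$, the index $i^{*}$ occurs in exactly one, namely $\{i^{*},\rho(i^{*})\}$, and $\rho(i^{*}) \neq i^{*}$ since $i^{*} \notin J$. Hence, read as a multiset of unordered pairs, the monomial of $\rho$ determines $\rho(i^{*})$. Consequently, if two bijections $\rho, \rho'$ give the same monomial then $\rho(i^{*}) = \rho'(i^{*}) =: j^{*}$, and cancelling the common factor $X_{i^{*},j^{*}}$ reduces the problem to the two bijections between the two-element sets $I \setminus \{i^{*}\}$ and $J \setminus \{j^{*}\}$. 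For every $2 \times 2$ submatrix of a symmetric matrix of indeterminates---principal or not---the diagonal and the off-diagonal monomial differ even under $X_{i,j} = X_{j,i}$, by a short direct check. So the two restricted bijections agree, and therefore $\rho = \rho'$.

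The real content is the lemma: the cycle-reversal coincidence that makes a principal $3 \times 3$ block symmetrically tropically singular disappears as soon as the row and column sets differ. The subtle point in the peeling step is that deleting $i^{*}$ and $j^{*}$ may leave a \emph{principal} $2 \times 2$ block, so one cannot recurse by invoking non-principality again; the rescue is that this coincidence phenomenon does not occur below size $3$, so the $2 \times 2$ case is rigid whether or not it is principal. The remaining pieces---the two implications, the passage from symmetric tropical rank to $3 \times 3$ submatrices, and the fact that tropical rank two means every $3 \times 3$ submatrix is tropically singular---are routine.
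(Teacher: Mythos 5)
Your proposal is correct and takes essentially the same route as the paper: the paper's proof likewise comes down to showing that a $3 \times 3$ submatrix whose row and column index sets differ has all six determinant monomials distinct under $X_{i,j} = X_{j,i}$, by isolating an index that occurs only among the rows (or only among the columns) and reducing to the $2 \times 2$ rigidity. Your peeling of $i^{*} \in I \setminus J$ with multisets of unordered pairs is just a tidier phrasing of the paper's index-by-index comparison, and the surrounding rank deductions match the paper's.
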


\begin{proof}
  If any $3 \times 3$ submatrix of a real symmetric matrix is not symmetrically tropically singular, then the matrix has symmetric tropical rank greater than two. So, what must be proven is that for a real symmetric matrix if a $3 \times 3$ submatrix is not a principal submatrix then tropically singular implies symmetrically tropically singular.
  
  Take any $3 \times 3$ submatrix from an $n \times n$ symmetric matrix of variables
  \begin{center}
    $\left(\begin{array}{ccc} x_{i,p} & x_{i,q} & x_{i,r} \\ x_{j,p} & x_{j,q} & x_{j,r} \\ x_{k,p} & x_{k,q} & x_{k,r} \end{array}\right)$,
  \end{center}
  where $i < j < k$, and $p < q < r$. The determinant of this submatrix is the polynomial
  \begin{center}
    $x_{i,p}x_{j,q}x_{k,r} + x_{i,q}x_{j,r}x_{k,p} + x_{i,r}x_{j,p}x_{k,q} - x_{i,p}x_{j,r}x_{k,q} - x_{i,q}x_{j,p}x_{k,r} - x_{i,r}x_{j,q}x_{k,p}$.
  \end{center}
  Suppose, given the symmetry of the $n \times n$ matrix of variables, that two of these monomials are equal. If $i < p$ then $i$ is not the index of any column in our submatrix, and symmetry provides no duplication of variables from row $i$. This means if a monomial in the $3 \times 3$ determinant above is duplicated, the monomials in a $2 \times 2$ minor are duplicated. This is impossible. Identical logic applies if $p < i$, and therefore $i = p$. Applying the same argument we get $j = q$ and $k = r$. So, the only situation where tropically singular and symmetrically tropically singular can differ for a $3 \times 3$ submatrix is if that submatrix is principal.
\end{proof}

In standard linear algebra if one column (or row) of a square matrix is a multiple of another, then that matrix must be singular. The same is true for symmetric tropical matrices.

\begin{prop}
  If $A$ is an $r \times r$ submatrix of an $n \times n$ symmetric matrix, and one row of $A$ is a tropical multiple of another, then $A$ is symmetrically tropically singular. The same is true if one column of $A$ is a tropical multiple of another.
\end{prop}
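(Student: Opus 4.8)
The plan is to imitate the classical proof that a matrix with one row a scalar multiple of another is singular: produce a second minimizing monomial by a single transposition, and then verify it is genuinely different modulo the symmetry relation $X_{i,j}=X_{j,i}$. Relabeling if necessary, suppose the row of $A$ indexed by $a$ is the tropical multiple by $c\in\mathbb{R}$ of the row indexed by $b$, so $A_{a,j}=c+A_{b,j}$ for every column index $j$ of the submatrix. Let $\rho$ be any bijection from the row index set of $A$ to its column index set that realizes $tropdet(A)$, and let $\rho'$ be obtained from $\rho$ by swapping the images of $a$ and $b$, i.e. $\rho'(a)=\rho(b)$, $\rho'(b)=\rho(a)$, and $\rho'(i)=\rho(i)$ otherwise. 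First I would check that $\rho'$ also realizes the tropical determinant: the weights of $\rho$ and $\rho'$ differ only in the factors coming from rows $a$ and $b$, and
\[
  A_{a,\rho(b)}+A_{b,\rho(a)} = \bigl(c+A_{b,\rho(b)}\bigr)+A_{b,\rho(a)} = \bigl(c+A_{b,\rho(a)}\bigr)+A_{b,\rho(b)} = A_{a,\rho(a)}+A_{b,\rho(b)},
\]
so the two weights are equal, and since $\rho$ was minimizing, so is $\rho'$.

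Next I would argue that the monomials $x_{\rho}$ and $x_{\rho'}$ remain distinct after imposing $X_{i,j}=X_{j,i}$. They agree except that $x_{\rho}$ carries the variables $X_{a,\rho(a)}X_{b,\rho(b)}$ where $x_{\rho'}$ carries $X_{a,\rho(b)}X_{b,\rho(a)}$; writing $u=\rho(a)$ and $v=\rho(b)$, we have $u\neq v$ since $\rho$ is a bijection and $a\neq b$. Equality under the symmetry would force the multiset of unordered index pairs $\{\,\{a,u\},\{b,v\}\,\}$ to equal $\{\,\{a,v\},\{b,u\}\,\}$, and matching term by term leaves only the possibilities $\{a,u\}=\{a,v\}$ (with $\{b,v\}=\{b,u\}$) or $\{a,u\}=\{b,u\}$ (with $\{b,v\}=\{a,v\}$). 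Each of these forces $u=v$ or $a=b$ after a short inspection, including the degenerate sub-cases where $u$ or $v$ lies in $\{a,b\}$, so a contradiction results in every case. Hence $A$ has at least two distinct minimizing monomials under $X_{i,j}=X_{j,i}$, i.e. $A$ is symmetrically tropically singular. For the column statement I would note that $A^{T}$ is again a submatrix of the same symmetric matrix (with its row and column index sets interchanged), that transposition preserves $tropdet$ and is compatible with the equivalence $X_{i,j}=X_{j,i}$, hence preserves symmetric tropical singularity, and that a column of $A$ being a tropical multiple of another is exactly a row of $A^{T}$ being a tropical multiple of another; so the column case reduces to the row case.

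I expect the only delicate step to be the last part of the second paragraph — checking that the transposition really changes the monomial even modulo $X_{i,j}=X_{j,i}$, and in particular disposing of the degenerate cases where $\rho$ sends $a$ or $b$ into $\{a,b\}$. Everything else (the weight computation and the transpose reduction) is routine bookkeeping.
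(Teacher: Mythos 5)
Your proposal is correct and follows essentially the same route as the paper: swap the images of the two proportional rows in a minimizing bijection, check the weights agree, and verify the two monomials stay distinct modulo $X_{i,j}=X_{j,i}$ because $a\neq b$ and $\rho(a)\neq\rho(b)$, with the column case handled by transposition. Your explicit multiset-of-unordered-pairs analysis is just a slightly more systematic version of the paper's four-equality check, so nothing is missing.
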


\begin{proof}
  Suppose $A$ is formed from the row indices $i_{1},\ldots,i_{r}$ and the column indices $j_{1},\ldots,j_{r}$ of the $n \times n$ symmetric matrix. Denote the rows of $A$ by $\textbf{a}_{i_{1}},\textbf{a}_{i_{2}},\ldots,\textbf{a}_{i_{r}}$. We may assume without loss of generality that $\textbf{a}_{i_{r}} = c \odot \textbf{a}_{i_{r-1}}$, where $c \in \mathbb{R}$. Suppose the monomial
  \begin{center}
    $X_{1} = X_{i_{1},\rho(i_{1})} \odot X_{i_{2},\rho(i_{2})} \odot \cdots \odot X_{i_{r-1},\rho(i_{r-1})} \odot X_{i_{r},\rho(i_{r})}$,
  \end{center}
  where $\rho$ is a bijection from the column indices of $A$ to the row indices, is a minimizing monomial for $A$. Given the equivalence of $\textbf{a}_{i_{r}}$ and $c \odot \textbf{a}_{i_{r-1}}$ the monomial
  \begin{center}
    $X_{2} = X_{i_{1},\rho(i_{1})} \odot X_{i_{2},\rho(i_{2})} \odot \cdots \odot X_{i_{r-1},\rho(i_{r})} \odot X_{i_{r},\rho(i_{r-1})}$
  \end{center}
  must have the same valuation as $X_{1}$, and therefore also be a minimizing monomial. If $X_{1} = X_{2}$ under the equivalence $X_{i,j} = X_{j,i}$ then this would require one of the four equalities below to be true:
  \begin{center}
    $X_{i_{r-1},\rho(i_{r-1})} = X_{i_{r-1},\rho(i_{r})}$; \hspace{.1 in} $X_{i_{r-1},\rho(i_{r-1})} = X_{\rho(i_{r}),i_{r-1}}$;
    
    $X_{i_{r-1},\rho(i_{r-1})} = X_{i_{r},\rho(i_{r-1})}$; \hspace{.1 in} $X_{i_{r-1},\rho(i_{r-1})} = X_{\rho(i_{r-1}),i_{r}}$.
  \end{center}
  Given $i_{r-1} \neq i_{r}$ and $\rho$ is a bijection, none of these equalities is possible. So, even under the equivalence $X_{i,j} = X_{j,i}$, the minimizing monomials $X_{1}$ and $X_{2}$ are distinct, and therefore $A$ is symmetrically tropically singular.
  
  An identical proof applies if one column is a tropical multiple of another.
\end{proof}

\section{When the minors of a symmetric matrix form a tropical basis}

In this section we examine all cases where the $r \times r$ minors of an $n \times n$ symmetric matrix of variables form a tropical basis, with the exception of the boundary case $r = 4$. These cases are $r = 2$, $r = 3$, and $r = n$.

To prove this, we will want a couple useful facts:

\begin{itemize}
  
\item If $A$ is a symmetric matrix, and we permute the rows of $A$ by a permutation $\sigma$, and the columns of $A$ by the \emph{same} permutation, then the resulting matrix $A'$ will be symmetric, and $A'$ will have the same symmetric tropical and symmetric Kapranov rank as $A$. We call a permutation of the rows and columns of $A$ by the same permutation a \emph{diagonal permutation}. 
  
\item If $A$ is a symmetric matrix, and we tropically multiply row $i$ by a constant $c$, and tropically multiply column $i$ by the \emph{same} constant, then the resulting matrix $A'$ will be symmetric, and $A'$ will have the same symmetric tropical and symmetric Kapranov rank as $A$. We call such an operation a \emph{symmetric} scaling of $A$.
  
\end{itemize}

In particular, we will assume without loss of generality that any symmetric matrix $A$ has been symmetrically scaled so that every row and column has $0$ as its minimal entry.

\subsection{Singular symmetric matrices}

By definition, a symmetric matrix is singular if it has less than full rank, and it is a fundamental result in linear algebra that this is the case if and only if the matrix has zero determinant.

\begin{thm}
  The determinant of a symmetric matrix of variables is a tropical basis for the ideal it generates. Equivalently, the $n \times n$ minor of an $n \times n$ symmetric matrix of variables forms a tropical basis.
\end{thm}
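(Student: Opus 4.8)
The plan is to recognize this statement as, after the right bookkeeping, a direct instance of Kapranov's Theorem (Theorem 2). First I would identify an $n \times n$ symmetric matrix with the tuple of its $\binom{n+1}{2}$ upper-triangular entries $x_{i,j}$, $i \le j$, so that the determinant $d = \det(X)$ becomes an honest single polynomial $d \in \tilde{K}[x_{i,j} : i \le j]$ in $\binom{n+1}{2}$ independent variables, and the ideal generated by ``the $n \times n$ minor'' is the principal ideal $(d)$.

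Next I would pin down the tropical polynomial $\mathcal{T}(d)$ explicitly. The Leibniz expansion of $\det$ contributes one term $\mathrm{sgn}(\tau)\prod_i x_{i,\tau(i)}$ per $\tau \in S_n$; after imposing $x_{i,j} = x_{j,i}$, Proposition 1 (together with the parity remark preceding it) shows that two such terms coincide exactly when $\tau_1$ and $\tau_2$ are cycle-similar, and that cycle-similar permutations carry the \emph{same} sign, so the terms in a class add rather than cancel. Hence, grouping by class, $d = \sum_{\tilde\tau \in \tilde{S}_n} \pm |\tilde\tau|\, x_{\tilde\tau}$ with every coefficient nonzero, and $\mathcal{T}(d)$ is a tropical polynomial with exactly one monomial of coefficient $0$ per cycle-distinct class, whose double-min locus $\mathbf{V}(\mathcal{T}(d))$ is precisely the set of symmetrically tropically singular matrices, i.e.\ the symmetric matrices of symmetric tropical rank less than $n$.

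Now I would invoke Kapranov's Theorem for the single polynomial $d$: it yields $\mathcal{T}(\mathbf{V}(d)) = \mathbf{V}(\mathcal{T}(d))$, where $\mathbf{V}(d) \subseteq (\tilde{K}^*)^{\binom{n+1}{2}}$ is the set of symmetric matrices over $\tilde{K}$ with all entries nonzero and vanishing determinant. Unwinding the definition of symmetric Kapranov rank, a real symmetric matrix $A$ lies in $\mathcal{T}(\mathbf{V}(d))$ exactly when it admits a symmetric lift with singular determinant, i.e.\ exactly when its symmetric Kapranov rank is less than $n$. So the equality handed to us by Kapranov's Theorem says precisely that symmetric tropical rank less than $n$ is equivalent to symmetric Kapranov rank less than $n$; equivalently, the $n \times n$ minor of an $n \times n$ symmetric matrix of indeterminates is a tropical basis for the ideal it generates. (The symmetric-scaling normalization is harmless here, as it changes neither side of the equivalence.)

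The step demanding care is not an estimate or a construction but the matching of conventions: one must check that passing to the $\binom{n+1}{2}$ upper-triangular coordinates turns ``symmetric lift in $(\tilde{K}^*)^{n \times n}$ with prescribed degrees'' into ``point of $\mathbf{V}(d)$ in the torus $(\tilde{K}^*)^{\binom{n+1}{2}}$'' faithfully — in particular that the requirement that every entry of the lift be a \emph{nonzero} Hahn series is exactly the torus condition under which Kapranov's Theorem is stated — and that the double-min locus of $\mathcal{T}(d)$ is the \emph{symmetric} tropical singularity condition rather than the ordinary one, which is exactly where Proposition 1 enters. Once those identifications are in place, the theorem is immediate from Theorem 2.
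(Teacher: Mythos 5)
Your proposal is correct and is essentially the paper's own argument: the determinant of the symmetric matrix of indeterminates is a single polynomial in the $\binom{n+1}{2}$ variables $x_{i,j}$, $i \le j$, so Kapranov's Theorem applies directly. The extra bookkeeping you supply (grouping Leibniz terms by cycle-similar classes, checking no cancellation, and matching the double-min locus with symmetric tropical singularity via Proposition 1) is exactly what the paper leaves implicit in its one-line proof.
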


\begin{proof}
  The determinant of a symmetric matrix of variables is a single polynomial, and is therefore a tropical basis by Kapranov's theorem.
\end{proof}

\subsection{Rank one symmetric matrices}

The rank one case is straightforward.

\begin{thm}
  A symmetric matrix has symmetric tropical rank one if and only if it has symmetric Kapranov rank one. Equivalently, the $2 \times 2$ minors of a symmetric matrix of variables are a tropical basis.
\end{thm}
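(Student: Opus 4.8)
The plan is to obtain the only nontrivial implication, symmetric tropical rank one $\Rightarrow$ symmetric Kapranov rank one, by forcing $A$ into a rigid normal form and then writing down an explicit rank-one symmetric lift; the reverse implication is already contained in the inequality symmetric tropical rank $\le$ symmetric Kapranov rank. So suppose $A$ is an $n\times n$ symmetric matrix of symmetric tropical rank one, i.e. every $2\times 2$ submatrix of $A$ is symmetrically tropically singular. The determinant of the submatrix on rows $\{i_1,i_2\}$ and columns $\{p_1,p_2\}$ is $x_{i_1,p_1}x_{i_2,p_2}-x_{i_1,p_2}x_{i_2,p_1}$, and its two monomials remain distinct even under $x_{a,b}=x_{b,a}$ (equating them would force $i_1=i_2$ or $p_1=p_2$; this is the same point used throughout Section~2). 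Hence symmetric tropical singularity of this submatrix means exactly $A_{i_1,p_1}+A_{i_2,p_2}=A_{i_1,p_2}+A_{i_2,p_1}$, and this holds for every pair of rows and every pair of columns. Specializing to rows $\{1,i\}$ and columns $\{1,j\}$ and using $A_{1,j}=A_{j,1}$ gives $A_{i,j}=A_{i,1}+A_{j,1}-A_{1,1}$ for all $i,j$; with $v_i:=A_{i,1}$ and $c:=-A_{1,1}$ this reads $A_{i,j}=v_i+v_j+c$, which is the Develin--Santos--Sturmfels tropical-rank-one normal form $A_{i,j}=u_i\odot v_j$, now constrained by symmetry.

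It remains to exhibit a symmetric rank-one lift. Over $\tilde{K}$ set $w_i:=t^{v_i+c/2}\in\tilde{K}^{*}$ and $\tilde{A}:=\textbf{w}\textbf{w}^{T}$, so $\tilde{A}_{i,j}=t^{v_i+v_j+c}$. Then $\tilde{A}$ is symmetric, has rank exactly one, and $deg(\tilde{A}_{i,j})=v_i+v_j+c=A_{i,j}$, so $\tilde{A}$ is a lift of $A$ and the symmetric Kapranov rank of $A$ is at most, hence equal to, one. (Alternatively, first symmetrically scale row and column $i$ by $-v_i-c/2$, which preserves symmetric Kapranov rank, reducing $A$ to the all-zeros matrix, which the all-ones rank-one symmetric matrix lifts.)

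I do not anticipate a genuine obstacle. All of the content is in extracting the normal form $A_{i,j}=v_i+v_j+c$ from the $2\times 2$ singularity conditions, and the one subtlety worth flagging is that one must feed in the conditions coming from the \emph{non-principal} $2\times 2$ submatrices, together with the symmetry $A_{i,j}=A_{j,i}$, in order to pin down both the additive structure $v_i+v_j$ and the symmetry of the form; once that is in hand the lift is immediate, and the equivalence with the ``$2\times 2$ minors are a tropical basis'' phrasing is just unwinding the definitions of the symmetric tropical prevariety and the symmetric tropical variety.
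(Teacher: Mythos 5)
Your proposal is correct and is essentially the paper's argument: both come down to exhibiting the pure monomial lift $\tilde{A}_{i,j}=t^{A_{i,j}}$ as a symmetric rank-one lift (your $\mathbf{w}\mathbf{w}^{T}$ with $w_i=t^{v_i+c/2}$ is exactly this lift). The only cosmetic difference is that you extract the additive normal form $A_{i,j}=v_i+v_j+c$ directly from the $2\times 2$ symmetric singularity conditions, whereas the paper first passes to standard tropical rank one via its rank-one proposition, writes every column as a tropical multiple of the first, and then verifies symmetry of the same lift by computation.
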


\begin{proof}
  As the symmetric tropical rank cannot be greater than the symmetric Kapranov rank, any symmetric matrix with symmetric Kapranov rank one must also have symmetric tropical rank one.
  
  If a symmetric matrix has symmetric tropical rank one, then by Proposition 1 it also has standard tropical rank one. This means every column of the matrix is a constant tropical multiple of the first column. If our matrix is of the form:
  \begin{center}
    $A = \left(\begin{array}{cccc} a_{1,1} & a_{1,2} & \cdots & a_{1,n} \\ a_{2,1} & a_{2,2} & \cdots & a_{2,n} \\ \vdots & \vdots & \ddots & \vdots \\ a_{n,1} & a_{n,2} & \cdots & a_{n,n} \end{array}\right)$,
  \end{center}
  and $\textbf{a}_{i}$ represents column $i$ of the matrix $A$, then $\textbf{a}_{i} = c_{i} \odot \textbf{a}_{1}$ for some constant $c_{i}$. By assumption $A$ is symmetric, so $a_{i,j} = a_{j,i}$. The matrix $A$ is the tropicalization of the matrix
  \begin{center}
    $\tilde{A} = \left(\begin{array}{cccc} \tilde{a}_{1,1} & \tilde{a}_{1,2} & \cdots & \tilde{a}_{1,n} \\ \tilde{a}_{2,1} & \tilde{a}_{2,2} & \cdots & \tilde{a}_{2,n} \\ \vdots & \vdots & \ddots & \vdots \\ \tilde{a}_{n,1} & \tilde{a}_{n,2} & \cdots & \tilde{a}_{n,n} \end{array}\right)$,
  \end{center}
  where $\tilde{a}_{i,1} = t^{a_{i,1}}$, and $\tilde{a}_{i,j} = t^{c_{j}}\tilde{a}_{i,1}$. The matrix $\tilde{A}$ has rank one by construction, and as $a_{i,j} = a_{j,i}$ we have 
  \begin{center}
    $\tilde{a}_{i,j} = t^{c_{j}}\tilde{a}_{i,1} = t^{c_{j} + a_{i,1}} = t^{a_{i,j}} = t^{a_{j,i}}$
    
    $= t^{c_{i} + a_{j,1}} = t^{c_{i}}t^{a_{j,1}} = t^{c_{i}}\tilde{a_{j,1}} = \tilde{a_{j,i}}$. 
  \end{center}
  So, $\tilde{A}$ is symmetric, and therefore $A$ has Kapranov rank one.
\end{proof}

\begin{cor}
  A $3 \times 3$ symmetric matrix $A$ has symmetric Kapranov rank two if and only if it has symmetric tropical rank two.
\end{cor}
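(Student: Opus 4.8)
The plan is to deduce this from the two results already established: the theorem that a symmetric matrix has symmetric tropical rank one if and only if it has symmetric Kapranov rank one (equivalently, that the $2 \times 2$ symmetric minors form a tropical basis), and the theorem that the $n \times n$ minor of an $n \times n$ symmetric matrix is a tropical basis, which for $n = 3$ says the single polynomial $\det$ of a symmetric matrix of variables is a tropical basis (this is just Kapranov's theorem applied to one polynomial). The first result controls the distinction between rank one and rank at least two, while the second controls the distinction between rank two and full rank three; a $3 \times 3$ matrix has only these two places where tropical and Kapranov behaviour could diverge.

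First I would note that, since symmetric tropical rank is always at most symmetric Kapranov rank, and since both ranks are at least one for a real symmetric matrix, the rank-one theorem gives that symmetric tropical rank is one if and only if symmetric Kapranov rank is one; negating, symmetric tropical rank is at least two exactly when symmetric Kapranov rank is at least two. Next I would observe that for a $3 \times 3$ matrix there is only one $3 \times 3$ submatrix, so "symmetric tropical rank at most two" is literally the statement that $A$ lies in the tropical prevariety of the $3 \times 3$ symmetric minor, i.e. that $A$ is symmetrically tropically singular; and "symmetric Kapranov rank at most two" is literally the statement that $A$ has a rank-at-most-two symmetric lift, i.e. a singular symmetric lift, which is exactly the statement that $A$ lies in the tropical variety of the $3 \times 3$ symmetric minor. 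By the theorem that this single minor is a tropical basis, these two conditions are equivalent: $A$ is symmetrically tropically singular if and only if $A$ admits a singular symmetric lift.

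Combining the two observations finishes it: symmetric tropical rank equals two precisely when $A$ is symmetrically tropically singular and symmetric tropical rank is at least two, which by the above happens precisely when $A$ admits a singular symmetric lift and symmetric Kapranov rank is at least two, which is precisely symmetric Kapranov rank equals two. The main point to be careful about is purely bookkeeping at the boundary — confirming that in the $3 \times 3$ case "rank at most two" really is the single-minor condition to which Kapranov's theorem applies verbatim, and "rank at least two" really is the complement of "rank one" to which the rank-one theorem applies verbatim — so there is no genuine computational obstacle here.
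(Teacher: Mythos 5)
Your proposal is correct and follows essentially the same route as the paper: the rank-one equivalence (Theorem 5) handles the boundary between rank one and rank at least two, and the fact that the single $3 \times 3$ symmetric determinant is a tropical basis (Theorem 4, i.e.\ Kapranov's theorem for one polynomial) handles the boundary between rank two and full rank. The paper's own proof is just a two-sentence version of this same argument.
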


\begin{proof}
  If $A$ has symmetric Kapranov rank two, then its symmetric tropical rank cannot be more than two, and by Theorem 5 its symmetric tropical rank cannot be one.
  
  If $A$ has symmetric tropical rank two its symmetric Kapranov rank must be at least two, and by Theorem 4 its symmetric Kapranov rank cannot be three.
\end{proof}

\subsection{Rank two symmetric matrices}

In this subsection we prove the $3 \times 3$ minors of a symmetric $n \times n$ matrix form a tropical basis. The proof is built on the foundation of several lemmas. In several places the proof uses ideas and modifications of arguments from the corresponding proof in Section 6 of \cite{dss}. A few times we will make the inductive assumption that, for a given natural number $n$, the $3 \times 3$ minors of an $m \times m$ symmetric matrix form a tropical basis if $m < n$. The $n = 3$ base case is Corollary 2. 

\newtheorem{lem}{Lemma}
\begin{lem}
  Suppose $A$ is a matrix of the form
  \begin{center}      
    $\left(\begin{array}{ccc} \textbf{0} & \textbf{0} & C \\ \textbf{0} & 0 & \textbf{0} \\ C^{T} & \textbf{0} & \textbf{0} \end{array}\right)$,
  \end{center}   
  where $C$ is nonnegative and has no zero column. If $A$ has symmetric tropical rank two, it has symmetric Kapranov rank two.
\end{lem}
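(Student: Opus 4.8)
The plan is to exploit the block structure of $A$ by using a symmetric scaling to normalize $C$, then to explicitly build a rank-two symmetric lift over $\tilde K$. Write the row/column index set of $A$ as a disjoint union $I \sqcup \{*\} \sqcup J$, where $I$ indexes the top block of zero rows, $*$ is the single middle index, and $J$ indexes the bottom block; the matrix $C$ has rows indexed by $I$ and columns indexed by $J$, and $C^T$ sits in the symmetric (bottom-left) position. First I would record what the symmetric tropical rank two hypothesis says: there is no symmetrically tropically nonsingular $3\times 3$ submatrix. Because the $I$-rows and $J$-columns are genuinely zero except in the $C$ and $C^T$ blocks, the only $3\times 3$ submatrices that can fail to be symmetrically tropically singular are those meeting both the $C$ block and the $C^T$ block (a $3\times 3$ submatrix lying entirely in the zero rows or entirely in the zero columns is automatically symmetrically tropically singular, and so is one using the middle row/column in a degenerate way). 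Unwinding this, the hypothesis should force a strong combinatorial condition on the $0$-entries of $C$: roughly, that the bipartite "zero pattern" of $C$ — the set of pairs $(i,j)\in I\times J$ with $C_{i,j}=0$ — is a union of a "row block times column block" rectangle, i.e. $C$ is, after a diagonal permutation, block-upper-triangular with a zero block in one corner and the complementary pattern controlled. I would make this precise by a short case analysis on which $3\times 3$ submatrices straddle $C$ and $C^T$.

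Next I would reduce to the case where $C$ itself is as simple as possible. Using a symmetric scaling (scale row $i$ and column $i$ of $A$ by the same constant for $i\in I\cup J$), I can arrange that every row and column of $C$ has minimal entry $0$; since $C$ is nonnegative with no zero column, after this normalization $C$ has a $0$ in every column, and (by scaling the $I$-rows) a $0$ in every row as well. The combinatorial consequence of the rank-two hypothesis derived above, combined with this normalization, should pin down the zero pattern of $C$ tightly enough that one can treat $C$ as having symmetric tropical rank one in the appropriate block sense, or reduce to a smaller instance and invoke the inductive hypothesis on a submatrix of size $<n$. The likely statement is: either $C$ has a zero row, in which case deleting the corresponding indices $i\in I$ from $A$ (it contributes only zeros) reduces $n$ and the inductive hypothesis applies; or $C$ has no zero row, and then the rank-two condition forces all entries of $C$ that are "used" in straddling minors to be equal, effectively making $C$ a rank-one-type pattern.

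With the structure in hand, the final step is to write down the lift. The natural ansatz is to lift the middle $0$ to something like $t$ (or to $1$), to lift the zero entries in the $C$-block and the zero rows/columns to generic powers of $t$ with the smallest admissible exponents, and to lift the positive entries $C_{i,j}$ to $c_{i,j}t^{C_{i,j}}$ with carefully chosen complex coefficients $c_{i,j}$, maintaining symmetry by using the same entry in the $(i,j)$ and $(j,i)$ positions. Concretely I expect the lift to look like an outer product $\tilde u \tilde v^T + \tilde v \tilde u^T$ or a symmetric rank-two matrix $\tilde U \tilde U^T$ (with $\tilde U\in(\tilde K^*)^{n\times 2}$) whose two column vectors are chosen so that: the first column is supported (to leading order) on $I\cup\{*\}$ and the second on $J\cup\{*\}$, so that the $I\times I$ and $J\times J$ blocks come out with high valuation (matching the zeros there), the $I\times J$ block reproduces $C$, and the middle entry reproduces the $0$. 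One verifies that every $3\times 3$ minor of this explicit $\tilde U\tilde U^T$ vanishes identically (automatic, since it has rank two) and that $\mathcal T(\tilde U \tilde U^T)=A$ (this is the computation that uses the normalization of $C$ and genericity of the coefficients).

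The main obstacle I anticipate is the genericity-of-coefficients bookkeeping in the last step: ensuring simultaneously that no unintended cancellation raises the valuation of an $I\times J$ entry above $C_{i,j}$ (which would break $\mathcal T(\tilde U\tilde U^T)=A$), while still forcing the valuations in the $I\times I$, $J\times J$ blocks and at the various zero positions to be large enough. This is exactly the place where the precise combinatorial consequence of "symmetric tropical rank two" on the zero pattern of $C$ gets used — without the right structural lemma on $C$, a single symmetric rank-two lift need not exist, so getting that structural lemma stated and proved sharply is the crux. The rest is either Kapranov's theorem (for the trivial direction and to reinterpret "rank two lift" as membership in the tropical variety) or a reduction to the inductive hypothesis on a proper principal submatrix.
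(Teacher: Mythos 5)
Your plan hinges on a structural claim about $C$ that is both unproven and, in fact, false, and this breaks the final lift construction. If you take a symmetric rank-two lift of the form $\tilde{U}\tilde{U}^{T} = \tilde{u}\tilde{u}^{T}+\tilde{v}\tilde{v}^{T}$ with $\tilde{u}$ of degree zero on $I\cup\{*\}$ and higher degree on $J$, and $\tilde{v}$ the other way around, then matching the zero blocks forces $deg(\tilde{u}_{i})=deg(\tilde{v}_{j})=0$ for $i \in I$, $j \in J$, and with generic (non-cancelling) coefficients the $(i,j)$ entry has degree $\min(deg(\tilde{u}_{j}),deg(\tilde{v}_{i}))$; so your ansatz can only produce matrices with $C_{i,j}=\min(\beta_{i},\alpha_{j})$ for row constants $\beta_i$ and column constants $\alpha_j$. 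But symmetric tropical rank two does not force $C$ into that shape: for instance $C=\left(\begin{array}{cc} 2 & 1 \\ 1 & 2\end{array}\right)$, giving the $5\times 5$ matrix $A$ with this $C$ in the corner blocks, has symmetric tropical rank two (every $3\times 3$ submatrix has two symmetrically distinct minimizing monomials), yet $C$ is not expressible as $\min(\beta_{i},\alpha_{j})$. So the "genericity bookkeeping" you defer to the end is not bookkeeping at all: exact cancellations of leading coefficients in the $I\times J$ block are unavoidable, and arranging them consistently is the entire difficulty. Your intermediate combinatorial claim (that the zero pattern of $C$ is a rectangle, or that $C$ is "rank-one-type") is likewise not a consequence of the hypothesis; tropical rank two only constrains the columns to lie on a one-dimensional tropical convex set, which allows much richer $C$. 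Two further slips: the proposed symmetric rescaling to normalize $C$ is unavailable, since scaling row and column $i\in I$ by a nonzero constant destroys the zero blocks of $A$; and the zeros of $A$ require lift entries of degree exactly zero, not "high valuation," so the valuation targets in your ansatz are stated backwards.

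The paper's proof avoids classifying $C$ altogether. It observes that the upper-right block $\left(\begin{array}{cc} \textbf{0} & C \\ 0 & \textbf{0}\end{array}\right)$ has standard tropical rank two and invokes Theorem 6.5 of Develin--Santos--Sturmfels to obtain a (not necessarily symmetric) rank-two lift of it; this lift and its transpose are installed as the off-diagonal blocks, and the entries of the two diagonal zero blocks are then constructed one at a time by forcing suitable bordered $3\times 3$ minors to be singular, with degree-zero and genericity checks, and with symmetry verified by comparing determinants of transposed $3\times 3$ matrices and by the bilinearity computation with the coefficients $\lambda_{j},\mu_{j}$. If you want a self-contained argument along your lines, you would essentially have to reprove the rank-two structure theory of \cite{dss} for the block $C$ and then control the cancellation pattern of a symmetric factorization, which is a substantially harder route than the extension argument the paper uses.
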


\begin{proof}
  If $C$ is a $k \times l$ matrix, we number the rows and columns of $A$ from $-k$ to $l$. The upper-left zero matrix is the submatrix of nonpositive indices, and the bottom-right zero matrix is the submatrix of nonnegative indices. Note they both contain the center element $A_{0,0}$. 
  
  As $C$ does not contain a zero column we may, possibly after a diagonal permutation, assume the entries $A_{-1,1} = A_{1,-1}$ are positive.

  We now construct a symmetric rank two lift $\tilde{A}$ of $A$. The upper-right submatrix
  \begin{center}
    $A_{UR} = \left(\begin{array}{cc} \textbf{0} & C \\ 0 & \textbf{0} \end{array}\right)$
  \end{center}  
  has (standard) tropical rank two, and so by Theorem 6.5 from \cite{dss} there exists a rank two lift $\tilde{A}_{UR}$ of this submatrix.\footnote{Theorem 6.5 from \cite{dss} relies upon Corollary 6.4 from the same paper, and Corollary 6.4 contains an error in its proof. A correction for this error is given in the first appendix of \cite{z}.} As $C$ does not contain the zero column, the first two columns of $\tilde{A}_{UR}$ must be linearly independent, and every other column of $\tilde{A}_{UR}$ can be written as a linear combination of these first two columns:
  \begin{center}
    $\lambda_{j}\tilde{\textbf{a}}_{0} + \mu_{j}\tilde{\textbf{a}}_{1} = \tilde{\textbf{a}}_{j}$.
  \end{center}
 
  The relation
  \begin{center}
    $\lambda_{j}\tilde{a}_{0,0} + \mu_{j}\tilde{a}_{0,1} = \tilde{a}_{0,j}$
  \end{center}
  implies the degrees of $\lambda_{j}$ and $\mu_{j}$ cannot both be positive, if one has positive degree the other must have degree zero, and if their degrees are both nonpositive they must be equal. If both $\lambda_{j}$ and $\mu_{j}$ had negative degrees, then given $C$ does not contain the zero column $C$ would have a negative entry, but this is not allowed as $C$ is nonnegative. If $\mu_{j}$ had positive degree then $\lambda_{j}$ would have degree zero, but this cannot be as then $C$ would contain the zero column. So, we must have $deg(\lambda_{j}) \geq deg(\mu_{j}) = 0$. 
  
  We use this lift $\tilde{A}_{UR}$, and its transpose, for the upper-right and bottom-left submatrices of $\tilde{A}$. We must complete the lift with entries $\tilde{a}_{i,j}$ for every $i,j$ with $ij > 0$, such that $deg(\tilde{a}_{i,j}) = 0$, $\tilde{a}_{i,j} = \tilde{a}_{j,i}$, and the entire matrix $\tilde{A}$ has rank two. We begin this task with the $3 \times 3$ central minor:

  \begin{center}
    $\left(\begin{array}{ccc} \tilde{a}_{-1,-1} & \tilde{a}_{-1,0} & \tilde{a}_{-1,1} \\ \tilde{a}_{0,-1} & \tilde{a}_{0,0} & \tilde{a}_{0,1} \\ \tilde{a}_{1,-1} & \tilde{a}_{1,0} & \tilde{a}_{1,1} \end{array}\right)$.
  \end{center}
  
  We pick $\tilde{a}_{1,1}$ such that $deg(\tilde{a}_{1,1}) = 0$, but otherwise generically. We want this matrix to be singular, and so once $\tilde{a}_{1,1}$ has been picked $\tilde{a}_{-1,-1}$ is determined. 

  As $\tilde{a}_{1,1}$ is generic, $\tilde{a}_{-1,-1}$ is as well. If $deg(\tilde{a}_{-1,-1}) < 0$, then in order for the above $3 \times 3$ matrix to be singular the leading terms in $\tilde{a}_{0,0}\tilde{a}_{1,1}-\tilde{a}_{0,1}\tilde{a}_{1,0}$ would need to cancel, which is impossible if $\tilde{a}_{1,1}$ is generic. If $deg(\tilde{a}_{-1,-1}) > 0$, then as $deg(\tilde{a}_{-1,1}) = deg(\tilde{a}_{1,-1}) > 0$ there would only be a single degree zero term, $\tilde{a}_{-1,0}\tilde{a}_{0,-1}\tilde{a}_{1,1}$, in the determinant of the $3 \times 3$ central minor, which would make it nonsingular. So, $deg(\tilde{a}_{-1,-1}) = 0$. 

  From here every term $\tilde{a}_{i,1}$ and $\tilde{a}_{i,-1}$, with $i > 1$ or $i < -1$, respectively, is determined by the need for the matrices
  \begin{center}
    $\left(\begin{array}{ccc} \tilde{a}_{-1,-1} & \tilde{a}_{-1,0} & \tilde{a}_{-1,1} \\ \tilde{a}_{0,-1} & \tilde{a}_{0,0} & \tilde{a}_{0,1} \\ \tilde{a}_{i,-1} & \tilde{a}_{i,0} & \tilde{a}_{i,1} \end{array}\right)$ \hspace{.1 in} and \hspace{.1 in} $\left(\begin{array}{ccc} \tilde{a}_{i,-1} & \tilde{a}_{i,0} & \tilde{a}_{i,1} \\ \tilde{a}_{0,-1} & \tilde{a}_{0,0} & \tilde{a}_{0,1} \\ \tilde{a}_{1,-1} & \tilde{a}_{1,0} & \tilde{a}_{1,1} \end{array}\right)$
  \end{center}
  to be, respectively, singular, and that $\tilde{a}_{1,1}$ and $\tilde{a}_{-1,-1}$ are generic ensures all these terms are also generic and have degree zero. The remaining entries $i,j > 1$ in the bottom-right zero matrix are determined by the relations:
  \begin{center}
    $\lambda_{j}\tilde{a}_{i,0} + \mu_{j}\tilde{a}_{i,1} = \tilde{a}_{i,j}$.
  \end{center}
  As $\tilde{a}_{i,1}$ is generic, $deg(\tilde{a}_{i,j}) = 0$ even if $deg(\lambda_{j}) = deg(\mu_{j})$. The degree zero upper-left entries are determined similarly.
  
  It remains to be proven that our lift is symmetric. We first prove $a_{1,i} = a_{i,1}$, with $i > 1$. We examine the matrices
  
  \begin{center}
    $\left(\begin{array}{ccc} \tilde{a}_{-1,-1} & \tilde{a}_{-1,0} & \tilde{a}_{-1,i} \\ \tilde{a}_{0,-1} & \tilde{a}_{0,0} & \tilde{a}_{0,i} \\ \tilde{a}_{1,-1} & \tilde{a}_{1,0} & \tilde{a}_{1,i} \end{array}\right)$ \hspace{.1 in} and \hspace{.1 in} $\left(\begin{array}{ccc} \tilde{a}_{-1,-1} & \tilde{a}_{-1,0} & \tilde{a}_{-1,1} \\ \tilde{a}_{0,-1} & \tilde{a}_{0,0} & \tilde{a}_{0,1} \\ \tilde{a}_{i,-1} & \tilde{a}_{i,0} & \tilde{a}_{i,1} \end{array}\right)$.
  \end{center}
  
  By construction 
  \begin{center}
    $\tilde{a}_{-1,0} = \tilde{a}_{0,-1}$, \hspace{.1 in} $\tilde{a}_{1,0} = \tilde{a}_{0,1}$, 
    
    $\tilde{a}_{-1,1} = \tilde{a}_{1,-1}$, \hspace{.1 in} and \hspace{.1 in} $\tilde{a}_{-1,i} = \tilde{a}_{i,-1}$. 
  \end{center}
  So, the formula for the determinant of the first matrix is the same as the formula for the determinant of the second with $\tilde{a}_{1,i}$ replaced by $\tilde{a}_{i,1}$. As both matrices are singular we must have $\tilde{a}_{1,i} = \tilde{a}_{i,1}$.
  
  For the remaining terms verifying symmetry is a straightforward calculation (here $i,j > 1$):
  \begin{center}
    $\tilde{a}_{i,j} = \lambda_{j} \tilde{a}_{i,0} + \mu_{j} \tilde{a}_{i,1} = \lambda_{j} \tilde{a}_{0,i} + \mu_{i} \tilde{a}_{1,i}$

    $= \lambda_{j} (\lambda_{i} \tilde{a}_{0,0} + \mu_{i} \tilde{a}_{0,1}) + \mu_{j} (\lambda_{i} \tilde{a}_{1,0} + \mu_{i} \tilde{a}_{1,1})$

    $= \lambda_{i} (\lambda_{j} \tilde{a}_{0,0} + \mu_{j} \tilde{a}_{1,0}) + \mu_{i} (\lambda_{j} \tilde{a}_{0,1} + \mu_{j} \tilde{a}_{1,1})$

    $= \lambda_{i} (\lambda_{j} \tilde{a}_{0,0} + \mu_{j} \tilde{a}_{0,1}) + \mu_{i} (\lambda_{j} \tilde{a}_{1,0} + \mu_{j} \tilde{a}_{1,1})$

    $= \lambda_{i} \tilde{a}_{0,j} + \mu_{i} \tilde{a}_{1,j} = \lambda_{i} \tilde{a}_{j,0} + \mu_{i} \tilde{a}_{j,1} = \tilde{a}_{j,i}$.
    
  \end{center}
  The verification of symmetry for $i,j < -1$ is essentially identical. So, we have constructed a symmetric rank two lift $\tilde{A}$ of $A$, and therefore $A$ has symmetric Kapranov rank two.
\end{proof}

\begin{lem}
  Suppose $A$ is a matrix of the form:  
  \begin{center}  
    $\left(\begin{array}{ccc} B_{1} & \textbf{0} & \textbf{0} \\ \textbf{0} & 0 & \textbf{0} \\ \textbf{0} & \textbf{0} & B_{2}\end{array}\right)$
  \end{center}
  where $B_{1}$ and $B_{2}$ are nonempty, positive symmetric matrices. If $A$ has symmetric tropical rank two, then it has symmetric Kapranov rank two.
\end{lem}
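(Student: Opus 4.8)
The plan is to delete the all-zero middle row and column, lift the resulting smaller matrix by induction, and then splice the missing row and column back in by hand. Write $B_1$ for a $k_1\times k_1$ block and $B_2$ for a $k_2\times k_2$ block, and index the rows and columns of $A$ by $B_1\cup\{*\}\cup B_2$, where $*$ is the middle index. Deleting row $*$ and column $*$ leaves a principal submatrix $B$ which is block diagonal with diagonal blocks $B_1$ and $B_2$ and zero elsewhere. Every $3\times 3$ submatrix of $B$ is a $3\times 3$ submatrix of $A$ and so is symmetrically tropically singular, hence $B$ has symmetric tropical rank at most two; since $B$ is strictly smaller than $A$, the inductive hypothesis gives $B$ symmetric Kapranov rank at most two, i.e. a symmetric lift $\tilde{B}$ over $\tilde{K}$ of rank at most two. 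In the same block notation, with off-diagonal block $\tilde{N}$, every entry of the $B_1$- and $B_2$-blocks of $\tilde{B}$ has positive degree (as $B_1,B_2$ are positive) while every entry of $\tilde{N}$ has degree zero.

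Next I would fix $i_0\in B_1$ and $j_0\in B_2$ and note that columns $i_0$ and $j_0$ of $\tilde{B}$ are $\tilde{K}$-linearly independent: column $i_0$ has a positive-degree entry in row $i_0$ and a degree-zero entry (from $\tilde{N}$) in row $j_0$, while column $j_0$ has these degrees interchanged, so a scalar carrying one column to the other would have to have both negative and positive degree. Thus $\tilde{B}$ has rank exactly two and columns $i_0,j_0$ span its column space; write column $k$ of $\tilde{B}$ as $\lambda_k$ times column $i_0$ plus $\mu_k$ times column $j_0$. I would then define a lift $\tilde{A}$ of $A$ by $\tilde{A}_{k,l}=\tilde{B}_{k,l}$ for $k,l\neq *$, by $\tilde{A}_{k,*}=\tilde{A}_{*,k}=\lambda\tilde{B}_{k,i_0}+\mu\tilde{B}_{k,j_0}$ for some nonzero constants $\lambda,\mu\in\mathbb{C}$, and by $\tilde{A}_{*,*}=\lambda^{2}\tilde{B}_{i_0,i_0}+2\lambda\mu\tilde{B}_{i_0,j_0}+\mu^{2}\tilde{B}_{j_0,j_0}$; this is exactly the choice making the new column of $\tilde{A}$ equal to $\lambda$ times column $i_0$ of $\tilde{A}$ plus $\mu$ times column $j_0$ of $\tilde{A}$.

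It then remains to check that $\tilde{A}$ is symmetric (immediate), that $\tilde{A}$ has rank at most two (column $k$ of $\tilde{A}$ equals $\lambda_k$ times column $i_0$ of $\tilde{A}$ plus $\mu_k$ times column $j_0$ of $\tilde{A}$, where the symmetry of $\tilde{B}$ is used to reconcile the new $*$-entries), and that $\tilde{A}$ has the right degrees. The entries with $k,l\neq *$ are correct by construction; the new entries of $A$ are all zero, and indeed $\deg\tilde{A}_{k,*}=0$ because for $k\in B_1$ the term $\mu\tilde{B}_{k,j_0}$ lies in $\tilde{N}$ and has degree zero while $\lambda\tilde{B}_{k,i_0}$ has positive degree (symmetrically for $k\in B_2$), and in $\tilde{A}_{*,*}$ the middle term has degree zero and the outer two have positive degree. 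Hence $A$ has a symmetric rank-two lift, so its symmetric Kapranov rank is at most two; since it is at least the symmetric tropical rank, it equals two.

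The step I expect to be delicate is this reinsertion of the middle row and column: the degrees of the new entries are completely pinned down (each must be zero, to match the zero middle row and column of $A$), yet the insertion has to preserve symmetry and keep the rank at most two simultaneously. The device that makes this work is to take the new column to be a constant-coefficient combination of one $B_1$-column and one $B_2$-column of the lift already produced by induction: the $B_1$-column contributes a degree-zero entry through the $\tilde{N}$-block at every $B_2$-index, and the $B_2$-column does the same at every $B_1$-index, which forces all the new entries down to degree zero, while the rank-two and symmetry properties are inherited from $\tilde{B}$ via the linear-combination identity.
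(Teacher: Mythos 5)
Your proof is correct, but it takes a genuinely different route from the paper's. The paper splits $A$ into two \emph{overlapping} pieces (each positive block together with the middle zero row/column), applies the inductive hypothesis to each, normalizes the two lifts to agree at the shared center entry, and then constructs the entire off-diagonal block entry by entry: a quadratic equation is solved to get a symmetric degree-zero choice for the first anti-diagonal entry, the remaining entries are forced by vanishing $3\times 3$ minors, and degrees and symmetry are verified by a fairly long computation. You instead delete the zero row and column, apply the inductive hypothesis once to the block-diagonal complement $B$ (legitimate, since every $3\times 3$ submatrix of the principal submatrix $B$ is a $3\times 3$ submatrix of $A$ with the same indices, so $B$ has symmetric tropical rank at most two and size $n-1$; when $n-1=2$ the conclusion is trivial anyway), and then reinsert the missing row/column as a constant-coefficient combination of one $B_1$-column and one $B_2$-column of $\tilde{B}$, with the corner entry $\lambda^{2}\tilde{B}_{i_0,i_0}+2\lambda\mu\tilde{B}_{i_0,j_0}+\mu^{2}\tilde{B}_{j_0,j_0}$. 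All the checks go through: the two chosen columns are independent by the degree argument, so they span the rank-two column space and every column of $\tilde{A}$ lies in the span of columns $i_0,j_0$ of $\tilde{A}$ (symmetry of $\tilde{B}$ reconciles the $*$-entries); and in each new entry exactly one summand comes from the degree-zero off-diagonal block while the others have positive degree, so no genericity is even needed to get degree zero. Your reinsertion device is essentially the augmentation trick the paper uses later in Lemma 5, part (2), for columns with disjoint cosupports, so in effect you have reduced Lemma 2 to one application of induction plus that augmentation. What each approach buys: yours is shorter and avoids the quadratic and the symmetry/degree bookkeeping; the paper's gluing construction is the template that is reused almost verbatim in Lemma 3, where the analogous off-diagonal completion is still needed.
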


\begin{proof}
  As in the previous lemma we number the rows and columns from $-k$ to $l$, where here $k \times k$ and $l \times l$ are the dimensions of $B_{1}$ and $B_{2}$, respectively.
  
  By induction we may assume the matrices
  \begin{center}
    $\left(\begin{array}{cc} B_{1} & \textbf{0} \\ \textbf{0} & 0 \end{array}\right)$ \hspace{.1 in} and \hspace{.1 in} $\left(\begin{array}{cc} 0 & \textbf{0} \\ \textbf{0} & B_{2} \end{array}\right)$
  \end{center}
  have symmetric rank two lifts $\tilde{B}_{1}$ and $\tilde{B}_{2}$, respectively, and after possibly multipling the left column and top row of $\tilde{B}_{2}$ by the same degree zero constant, we may assume the bottom-right entry of $\tilde{B}_{1}$ is equal to the top-left entry of $\tilde{B}_{2}$.
  
  We now construct a symmetric rank two lift $\tilde{A}$ of $A$. We begin with the lifts $\tilde{B}_{1}$ and $\tilde{B}_{2}$, and construct the entries in the upper-right zero matrix.
  
  Like in Lemma 2 we start with the $3 \times 3$ principal submatrix:
  \begin{center}
    $\left(\begin{array}{ccc} \tilde{a}_{-1,-1} & \tilde{a}_{-1,0} & \tilde{a}_{-1,1} \\ \tilde{a}_{0,-1} & \tilde{a}_{0,0} & \tilde{a}_{0,1} \\ \tilde{a}_{1,-1} & \tilde{a}_{1,0} & \tilde{a}_{1,1} \end{array}\right)$. 
  \end{center}
  We need this matrix to be singular and symmetric. This means we must find $x$ such that
  \begin{center}
    $\left|\begin{array}{ccc} \tilde{a}_{-1,-1} & \tilde{a}_{-1,0} & x \\ \tilde{a}_{0,-1} & \tilde{a}_{0,0} & \tilde{a}_{0,1} \\ x & \tilde{a}_{1,0} & \tilde{a}_{1,1} \end{array}\right| = 0$.
  \end{center}   
  This is a quadratic equation $ax^{2}+bx+c=0$ with $deg(a) = deg(b) = 0$, and $deg(c) > 0$. Given this, the discriminant of the quadratic is nonzero, and there are two distinct roots $x_{1}$ and $x_{2}$, the first with degree zero, and the second with positive degree. We set $\tilde{a}_{-1,1} = \tilde{a}_{1,-1} = x_{1}$. Note the above $3 \times 3$ principal submatrix being symmetric implies
  \begin{center}
    $\left|\begin{array}{cc} \tilde{a}_{-1,0} & \tilde{a}_{-1,1} \\ \tilde{a}_{0,0} & \tilde{a}_{0,1} \end{array}\right|$ \hspace{.1 in} $=$ \hspace{.1 in } $\left|\begin{array}{cc} \tilde{a}_{0,-1} & \tilde{a}_{0,0} \\ \tilde{a}_{1,-1} & \tilde{a}_{1,0} \end{array}\right|$.
  \end{center}
  The degree of these determinants must be zero, because if it was not, it would be impossible for the determinant of the $3 \times 3$ principal submatrix to be zero.
  
  Every term $\tilde{a}_{i,1}$ with $i < -1$ and $\tilde{a}_{i,-1}$ with $i > 1$ is determined by the need for the matrices
  \begin{center}
    $\left(\begin{array}{ccc} \tilde{a}_{-1,-1} & \tilde{a}_{-1,0} & \tilde{a}_{-1,1} \\ \tilde{a}_{0,-1} & \tilde{a}_{0,0} & \tilde{a}_{0,1} \\ \tilde{a}_{i,-1} & \tilde{a}_{i,0} & \tilde{a}_{i,1} \end{array}\right)$ \hspace{.1 in} and \hspace{.1 in} $\left(\begin{array}{ccc} \tilde{a}_{i,-1} & \tilde{a}_{i,0} & \tilde{a}_{i,1} \\ \tilde{a}_{0,-1} & \tilde{a}_{0,0} & \tilde{a}_{0,1} \\ \tilde{a}_{1,-1} & \tilde{a}_{1,0} & \tilde{a}_{1,1} \end{array}\right)$  
  \end{center} 
  to be, respectively, singular. That every such term has degree zero follows from the $2 \times 2$ minors discussed above having degree zero.
  
  Every column in $\tilde{B}_{2}$ can be written as a linear combination of the left two: 
  \begin{center}
    $\lambda_{j} \tilde{\textbf{b}}_{0} + \mu_{j} \tilde{\textbf{b}}_{1} = \tilde{\textbf{b}}_{j}$.
  \end{center}
  We use these relations to define the entries $\tilde{a}_{i,j}$ with $i < 0$ and $j > 1$:  
  \begin{center}
    $\lambda_{j} \tilde{a}_{i,0} + \mu_{j} \tilde{a}_{i,1} = \tilde{a}_{i,j}$.
  \end{center}  
  We similarly use the right two columns of $\tilde{B}_{1}$ to define the terms $\tilde{a}_{i,j}$ with $i > 0, j < -1$. This determines a rank two matrix $\tilde{A}$. We must verify the matrix is symmetric, and is a lift of $A$.
  
  Suppose $i < 0$. We must verify that all terms $\tilde{a}_{i,j}$ with $j > 1$ have degree zero. We can write column $j$ as a linear combination of columns $-1$ and $1$:
  \begin{center}
    $\sigma_{j}\tilde{\textbf{a}}_{-1} + \rho_{j}\tilde{\textbf{a}}_{1} = \tilde{\textbf{a}}_{j}$.
  \end{center}
  As all the terms in row $0$ have degree zero, it cannot be that $\sigma_{j}$ and $\rho_{j}$ both have positive degree, and if their degrees were negative they must be equal. If the degrees were negative this would imply elements in $\tilde{B}_{2}$ with negative degree, which cannot be. If $deg(\rho_{j}) > 0$ while $deg(\sigma_{j}) = 0$, then $\tilde{B}_{2}$ would have a column outside the first where all elements have degree zero, which cannot be. So, we must have $0 = deg(\rho_{j}) \leq deg(\sigma_{j})$. As $\tilde{a}_{i,-1}$ has positive degree and $\tilde{a}_{i,1}$ has degree zero it must be that $\tilde{a}_{i,j}$ has degree zero as well. Identical reasoning gives us that all terms $\tilde{a}_{i,j}$ with $j < -1$ and $i > 0$ also have degree zero.    
  
  It remains to be proven that $\tilde{A}$ is symmetric. As $\tilde{B}_{1}$ and $\tilde{B}_{2}$ are symmetric, we must only prove $\tilde{a}_{i,j} = \tilde{a}_{j,i}$ when $ij < 0$. Suppose $j > 1$, and examine the two matrices 
  \begin{center}
    $\left(\begin{array}{ccc} \tilde{a}_{-1,-1} & \tilde{a}_{-1,0} & \tilde{a}_{-1,j} \\ \tilde{a}_{0,-1} & \tilde{a}_{0,0} & \tilde{a}_{0,j} \\ \tilde{a}_{1,-1} & \tilde{a}_{1,0} & \tilde{a}_{1,j} \end{array}\right)$, \hspace{.1 in} and \hspace{.1 in} $\left(\begin{array}{ccc} \tilde{a}_{-1,-1} & \tilde{a}_{-1,0} & \tilde{a}_{-1,1} \\ \tilde{a}_{0,-1} & \tilde{a}_{0,0} & \tilde{a}_{0,1} \\ \tilde{a}_{j,-1} & \tilde{a}_{j,0} & \tilde{a}_{j,1} \end{array}\right)$.
  \end{center}
  By construction 
  \begin{center}
    $\tilde{a}_{-1,0} = \tilde{a}_{0,-1}$, \hspace{.1 in} $\tilde{a}_{0,1} = \tilde{a}_{1,0}$, 
    
    $\tilde{a}_{0,j} = \tilde{a}_{j,0}$, \hspace{.1 in} and \hspace{.1 in} $\tilde{a}_{1,j} = \tilde{a}_{j,1}$.
  \end{center}
  As the above matrices are also singular we must have $\tilde{a}_{-1,j} = \tilde{a}_{j,-1}$. The proof that $\tilde{a}_{1,j} = \tilde{a}_{j,1}$ for $j < -1$ is essentially identical. From here verifying symmetry is a calculation:
  \begin{center}
    
    $\tilde{a}_{i,j} = \sigma_{j} \tilde{a}_{i,-1} + \rho_{j} \tilde{a}_{i,1} = \sigma_{j} \tilde{a}_{-1,i} + \rho_{j} \tilde{a}_{1,i}$
    
    $= \sigma_{j} (\sigma_{i} \tilde{a}_{-1,-1} + \rho_{i} \tilde{a}_{-1,1}) + \mu_{j} (\sigma_{i} \tilde{a}_{1,-1} + \rho_{i} \tilde{a}_{1,1})$
    
    $= \sigma_{i} (\sigma_{j} \tilde{a}_{-1,-1} + \rho_{j} \tilde{a}_{1,-1}) + \rho_{i} (\sigma_{j} \tilde{a}_{-1,1} + \rho_{j} \tilde{a}_{1,1})$
    
    $= \sigma_{i} (\sigma_{j} \tilde{a}_{-1,-1} + \rho_{j} \tilde{a}_{-1,1}) + \rho_{i} (\sigma_{j} \tilde{a}_{1,-1} + \rho_{j} \tilde{a}_{1,1})$
    
    $= \sigma_{i} \tilde{a}_{-1,j} + \rho_{i} \tilde{a}_{1,j} = \sigma_{i} \tilde{a}_{j,-1} + \rho_{i} \tilde{a}_{j,1} = \tilde{a}_{j,i}$.
  \end{center}
  
  So, $\tilde{A}$ is a rank two symmetric lift of $A$, and therefore $A$ has symmetric Kapranov rank two.
\end{proof}

\begin{lem}
  Suppose $A$ has the form  
  \begin{center}
    $\left(\begin{array}{ccccc} B_{1} & \textbf{0} & \textbf{0} & \textbf{0} & \textbf{0} \\ \textbf{0} & B_{2} & \textbf{0} & \textbf{0} & \textbf{0} \\ \textbf{0} & \textbf{0} & 0 & \textbf{0} & \textbf{0} \\ \textbf{0} & \textbf{0} & \textbf{0} & \textbf{0} & C \\ \textbf{0} & \textbf{0} & \textbf{0} & C^{T} & \textbf{0} \end{array}\right)$,
  \end{center}
  where $B_{1}, B_{2}$ are symmetric and positive, $C$ is nonnegative and does not contain a zero column, and either $C$ or both $B_{1}$ and $B_{2}$ have positive size. If $A$ has symmetric tropical rank two it has symmetric Kapranov rank two.
\end{lem}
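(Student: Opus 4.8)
The plan is to build a symmetric rank two lift $\tilde A$ of $A$ by gluing together a rank two lift of the ``$B$-part'' of $A$ and a rank two lift of its ``$C$-part'' along the single central zero that these two pieces share.

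First, dispose of the degenerate cases. If $C$ is empty, then by hypothesis both $B_1$ and $B_2$ are nonempty and $A$ is exactly the matrix of Lemma 2; and if $C$ is nonempty while both $B_1,B_2$ are empty, then $A$ is exactly the matrix of Lemma 1. So assume $C$ is nonempty and at least one of $B_1,B_2$ is nonempty. After a diagonal permutation we may regard the index set of $A$ as the union of two principal blocks that overlap only in the central $1\times 1$ zero: the \emph{$B$-part}, spanned by the rows and columns of $B_1$, $B_2$ and the central zero, which (up to a further diagonal permutation) is the matrix of Lemma 2 when both $B_i$ are nonempty and is a symmetric matrix of size less than $n$ otherwise; and the \emph{$C$-part}, spanned by the rows and columns of $C$, $C^T$ and the central zero, which is the matrix of Lemma 1. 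Each is a principal submatrix of $A$, so has symmetric tropical rank at most two, and therefore admits a symmetric rank two lift: $\tilde A_B$ for the $B$-part by Lemma 2 (or by the inductive hypothesis on the smaller matrix, when a $B_i$ is empty), and $\tilde A_C$ for the $C$-part by Lemma 1.

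Next, glue $\tilde A_B$ and $\tilde A_C$. They share only the central index $c$, and after a symmetric scaling of $\tilde A_C$ we may assume its central diagonal entry equals that of $\tilde A_B$. Now argue exactly as in the proofs of Lemmas 1 and 2. Retain inside the $B$-part the entries of $\tilde A_B$ and inside the $C$-part the entries of $\tilde A_C$, and fix two pivot columns for the whole matrix: the central column $c$, and the column $1$ of $\tilde A_C$ furnished by the proof of Lemma 1 (a column of $C$ having a positive entry $A_{-1,1}$ across the central index). Then define every remaining entry of $\tilde A$ — which is precisely an entry of one of the zero blocks of $A$ joining $B_1$ or $B_2$ to the $C$-part — so that each column of $\tilde A$ is the same linear combination of the two pivot columns that it is within $\tilde A_B$ (if it is a $B$-column) or within $\tilde A_C$ (if it is a $C$-column); here one first fixes the entries of pivot column $1$ in the rows indexed by $B_1,B_2$, generically of degree $0$, so that the $B$-columns land in the span of the two pivots. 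The resulting matrix has rank at most two by construction and restricts correctly to the two pieces; symmetry of the newly filled joining entries follows, as in Lemmas 1 and 2, by equating two singular $3\times 3$ minors running through the central index, after which the remaining verification is the same chain of identities that closes those lemmas.

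The one substantive issue — and the step I expect to require the real work — is to check that each joining entry of $\tilde A$ has degree exactly $0$, so that $\tilde A$ is genuinely a lift of $A$. This rests on the degree bookkeeping already present in Lemma 1: there the coefficients $\lambda_j,\mu_j$ expressing a column in terms of the two pivots satisfy $\deg(\lambda_j)\ge\deg(\mu_j)=0$; moreover, since the diagonal of the $C$-part is identically zero and every entry of its central row is zero, every entry of $\tilde A_C$ in the central row has degree $0$, while in $\tilde A_B$ every row indexed by $B_1$ or $B_2$ has a degree-$0$ entry in the central column. A joining entry $\tilde a_{i,j}$ with $i$ indexed by a $B_i$ and $j$ by the $C$-part is then of the form $\lambda_j\,\tilde a_{i,c}+\mu_j\,\tilde a_{i,1}$ with $\tilde a_{i,c}$ and $\tilde a_{i,1}$ both of degree $0$, so it has degree $0$ because $\deg(\mu_j)=0$ — unless the leading terms cancel, which the genericity of the free entries rules out, exactly as genericity of $\tilde a_{i,1}$ forces $\deg(\tilde a_{i,j})=0$ in the proof of Lemma 1. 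Once this is granted, $\tilde A$ is a symmetric lift of $A$ of rank at most two, so $A$ has symmetric Kapranov rank at most two; since symmetric tropical rank never exceeds symmetric Kapranov rank, $A$ has symmetric Kapranov rank exactly two.
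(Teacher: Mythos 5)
Your overall architecture matches the paper's: split $A$ into the $B$-part and the $C$-part overlapping in the central zero, lift each piece (Lemmas 1--2 or induction), rescale so the central entries agree, and fill the joining zero blocks by expressing every column as a combination of columns through the central index. But there is a genuine gap at the gluing step, and it is exactly where the paper does its real work. You fix the entries of pivot column $1$ in the $B$-rows ``generically of degree $0$'' and assert that symmetry of the joining blocks then follows by equating transposed singular $3\times 3$ minors. That transpose argument only \emph{propagates} symmetry once a seed symmetry across the joint is already in place: in the paper, $\tilde{a}_{-1,1}=\tilde{a}_{1,-1}$ is chosen as a root of an explicit quadratic, obtained by demanding that the central $3\times 3$ principal submatrix be simultaneously symmetric and singular (the same unknown $x$ sits in both corner positions), and only then are the remaining entries $\tilde{a}_{i,1}$, $\tilde{a}_{i,-1}$ forced by singularity of further $3\times 3$ minors. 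In your construction a generic choice of the $B$-row entries of column $1$ yields a rank-two matrix that is simply not symmetric: writing each $B$-column as $\alpha_{i}(\mathrm{col}\ 0)+\beta_{i}(\mathrm{col}\ 1)$, symmetry of the joining blocks is equivalent to $\tilde{a}_{i,1}=\alpha_{i}\tilde{a}_{1,0}+\beta_{i}\tilde{a}_{1,1}$ for every $B$-row $i$, a nontrivial algebraic condition on the very entries you chose freely (already in the smallest case $B_{1}=(b)$, $C=(c)$ it reduces precisely to the paper's quadratic). So the choice cannot be generic, your proposal never actually makes the choice that works, and it also omits the needed observation that the quadratic has a degree-zero root because all three of its coefficients have degree zero.

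The gap has a knock-on effect on your degree check. You rule out cancellation of leading terms in $\lambda_{j}\tilde{a}_{i,0}+\mu_{j}\tilde{a}_{i,1}$ by ``genericity of the free entries,'' but once column $1$ is pinned down by the symmetry constraint there is no genericity left to invoke (only finitely many admissible choices). The paper instead argues structurally: for $i<0<j$ the singular principal $3\times 3$ minor on rows and columns $\{i,0,j\}$ has $\tilde{a}_{i,i}$ of positive degree and every other entry except $\tilde{a}_{i,j}=\tilde{a}_{j,i}$ of degree zero, which forces $\deg \tilde{a}_{i,j}=0$. Your handling of the degenerate cases via Lemmas 1 and 2, and the coefficient bookkeeping $\deg\lambda_{j}\ge\deg\mu_{j}=0$ imported from Lemma 1, are fine; the missing piece is the symmetric, degree-zero solution of the central quadratic and the forcing of the rest of column $1$ from it, after which the transpose-minor argument and a structural (not genericity-based) degree argument complete the proof.
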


\begin{proof}
  If $B_{1}$ and $B_{2}$ both have size zero, this is Lemma 1. If $C$ has size zero, this is Lemma 2. So, suppose $C$ has positive size, and at least one of $B_{1}$ and $B_{2}$ have positive size. The method of proof here is similar to the method used in the previous two lemmas. 
  
  By induction we may find a rank two symmetric lift for the upper-left matrix      
  \begin{center}  
    $\left(\begin{array}{ccc} B_{1} & \textbf{0} & \textbf{0} \\ \textbf{0} & B_{2} & \textbf{0} \\ \textbf{0} & \textbf{0} & 0 \end{array}\right)$,
  \end{center}
  and the lower-right matrix
  \begin{center}
      $\left(\begin{array}{ccc} 0 & \textbf{0} & \textbf{0} \\ \textbf{0} & \textbf{0} & C \\ \textbf{0} & C^{T} & \textbf{0} \end{array}\right)$.
  \end{center}
  Call these lifts $\tilde{B}$ and $\tilde{C}$, respectively. After possibly multiplying the left column and top row of $\tilde{C}$ by the same degree zero constant, we may assume the bottom-right entry of $\tilde{B}$ coincides with the top-left entry of $\tilde{C}$.
  
  The lifts $\tilde{B}$ and $\tilde{C}$ will be, respectively, the upper-left and lower-right parts of the lift $\tilde{A}$ we wish to construct. We number the rows and columns of $\tilde{A}$ in a manner similar to Lemmas 1 and 2, with the $a_{0,0}$ entry being the degree zero entry that must match in lifts $\tilde{B}$ and $\tilde{C}$. We must complete the lift $\tilde{A}$ by finding entries for the terms $a_{i,j}$ with $ij < 0$.
  
  We again start with the $3 \times 3$ principal submatrix:  
  \begin{center}
    $\left(\begin{array}{ccc} \tilde{a}_{-1,-1} & \tilde{a}_{-1,0} & \tilde{a}_{-1,1} \\ \tilde{a}_{0,-1} & \tilde{a}_{0,0} & \tilde{a}_{0,1} \\ \tilde{a}_{1,-1} & \tilde{a}_{1,0} & \tilde{a}_{1,1} \end{array}\right)$.
  \end{center}  
  We pick $\tilde{a}_{-1,1}$ and $\tilde{a}_{1,-1}$ such that this matrix is singular and $\tilde{a}_{-1,1} = \tilde{a}_{1,-1}$. As in Lemma 2, this means finding the roots of a quadratic $ax^{2}+bx+c$, but in this case all three coefficients have degree zero, which means the roots must also.
  
  Every term $\tilde{a}_{i,1}$ for $i < -1$, and $a_{i,-1}$ for $i > 1$, is then determined by the need for the matrices 
  \begin{center}
    $\left(\begin{array}{ccc} \tilde{a}_{i,-1} & \tilde{a}_{i,0} & \tilde{a}_{i,1} \\ \tilde{a}_{0,-1} & \tilde{a}_{0,0} & \tilde{a}_{0,1} \\ \tilde{a}_{1,-1} & \tilde{a}_{1,0} & \tilde{a}_{1,1} \end{array}\right)$ \hspace{.1 in} and \hspace{.1 in} $\left(\begin{array}{ccc} \tilde{a}_{-1,-1} & \tilde{a}_{-1,0} & \tilde{a}_{-1,1} \\ \tilde{a}_{0,-1} & \tilde{a}_{0,0} & \tilde{a}_{0,1} \\ \tilde{a}_{i,-1} & \tilde{a}_{i,0} & \tilde{a}_{i,1}\end{array}\right)$
  \end{center}
  to be, respectively, singular.
  
  Every column of $\tilde{C}$ can be written as a linear combination of the left two:
  \begin{center}
    $\lambda_{j}\tilde{\textbf{c}}_{0} + \mu_{j}\tilde{\textbf{c}}_{1} = \tilde{\textbf{c}}_{j}$.  
  \end{center}
  We use these relations to define the entries $\tilde{a}_{i,j}$ with $i < 0$ and $j > 1$:
  \begin{center}
    $\lambda_{j}\tilde{a}_{i,0} + \mu_{j}\tilde{a}_{i,1} = \tilde{a}_{i,j}$.
  \end{center}
  We similarly use the right two columns of $\tilde{B}$ to define the terms $\tilde{a}_{i,j}$ with $i > 0, j < -1$. This determines a rank two matrix $\tilde{A}$. We must verify the matrix is symmetric, and is a lift of $A$.
  
  We first prove $\tilde{A}$ is symmetric. By construction all terms of the form $\tilde{a}_{i,j}$ with $ij \geq 0$ satisfy $\tilde{a}_{i,j} = \tilde{a}_{j,i}$. Also, by construction $\tilde{a}_{1,-1} = \tilde{a}_{-1,1}$. Using these facts we note the matrices   
  \begin{center}
    $\left(\begin{array}{ccc} \tilde{a}_{i,-1} & \tilde{a}_{i,0} & x \\ \tilde{a}_{0,-1} & \tilde{a}_{0,0} & \tilde{a}_{0,1} \\ \tilde{a}_{1,-1} & \tilde{a}_{1,0} & \tilde{a}_{1,1} \end{array}\right)$ \hspace{.1 in} and \hspace{.1 in} $\left(\begin{array}{ccc} \tilde{a}_{-1,i} & \tilde{a}_{-1,0} & \tilde{a}_{-1,1} \\ \tilde{a}_{0,i} & \tilde{a}_{0,0} & \tilde{a}_{0,1} \\ x & \tilde{a}_{1,0} & \tilde{a}_{1,1} \end{array}\right)$
  \end{center} 
  are transposes. Therefore, $\tilde{a}_{i,1}$, the unique value of $x$ that makes the matrix on the left singular, is equal to $\tilde{a}_{1,i}$, the unique value of $x$ that makes the matrix on the right singular.
  
  Using these equalities we note the matrices
  \begin{center}    
    $\left(\begin{array}{ccc} \tilde{a}_{i,i} & \tilde{a}_{i,0} & x \\ \tilde{a}_{0,i} & \tilde{a}_{0,0} & \tilde{a}_{0,j} \\ \tilde{a}_{1,i} & \tilde{a}_{1,0} & \tilde{a}_{1,j} \end{array}\right)$ \hspace{.1 in} and \hspace{.1 in} $\left(\begin{array}{ccc} \tilde{a}_{i,i} & \tilde{a}_{i,0} & \tilde{a}_{i,1} \\ \tilde{a}_{0,i} & \tilde{a}_{0,0} & \tilde{a}_{0,1} \\ x & \tilde{a}_{j,0} & \tilde{a}_{j,1} \end{array}\right)$
  \end{center}  
  are also transposes. So, $\tilde{a}_{i,j}$, the unique value of $x$ that makes the matrix on the left singular, is equal to $\tilde{a}_{j,i}$, the unique value of $x$ that makes the matrix on the right singular. So, the matrix $\tilde{A}$ is symmetric.
  
  It remains to be proven that each $\tilde{a}_{i,j}$ with $ij < 0$ has degree zero. Suppose $i < 0, j > 0$. That $\tilde{a}_{i,j}$ has degree zero follows because the matrix
  \begin{center}
    $\left(\begin{array}{ccc} \tilde{a}_{i,i} & \tilde{a}_{i,0} & \tilde{a}_{i,j} \\ \tilde{a}_{0,i} & \tilde{a}_{0,0} & \tilde{a}_{0,j} \\ \tilde{a}_{j,i} & \tilde{a}_{j,0} & \tilde{a}_{j,j} \end{array}\right)$
  \end{center}   
  is singular, $\tilde{a}_{i,i}$ has positive degree, and all other terms that are not $\tilde{a}_{i,j} = \tilde{a}_{j,i}$ have degree zero. The only way this matrix could possibly be singular is if $\tilde{a}_{i,j}$ has degree zero. As our matrix is symmetric this completes the proof.
\end{proof}

\begin{lem}
  Let $A$ be a symmetric matrix with symmetric tropical rank two. After possibly a diagonal permutation $A$ has the block structure:
  \begin{center}
    $\left(\begin{array}{ccccc} \textbf{0} & \textbf{0} & \textbf{0} & \textbf{0} & \textbf{0} \\ \textbf{0} & B_{1} & \textbf{0} & \textbf{0} & \textbf{0} \\ \textbf{0} & \textbf{0} & B_{2} & \textbf{0} & \textbf{0} \\ \textbf{0} & \textbf{0} & \textbf{0} & \textbf{0} & C \\ \textbf{0} & \textbf{0} & \textbf{0} & C^{T} & \textbf{0} \end{array}\right)$,
  \end{center}
  where the matrices $B_{1}$ and $B_{2}$ are symmetric and positive, and the matrix $C$ is non-negative and has no zero columns. Each $\textbf{0}$ represents a zero matrix of the appropriate size.  It is possible that $A$ has no rows/columns with all $0$ entries, and so the first row/column blocks of $A$ may be empty. It is also possible that the matrices $B_{1}, B_{2}$ and $C$ may have size zero. The only exceptions being $A$ cannot be a matrix consisting of just one of the positive blocks ($B_{1}$ or $B_{2}$), nor can $A$ be the zero matrix. 
\end{lem}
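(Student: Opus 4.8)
The plan is to deduce the block structure directly from the positions of the zero entries of $A$; after a symmetric scaling we may and do assume $A$ is nonnegative with a $0$ in every row and column. Partition the index set $\{1,\dots,n\}$ as $D_+ \sqcup D_0$, where $D_+ = \{\, i : A_{i,i} > 0 \,\}$ and $D_0 = \{\, i : A_{i,i} = 0 \,\}$. Every claim below is verified by feeding a suitable $2 \times 2$ or $3 \times 3$ submatrix into the singularity condition that symmetric tropical rank two forces; recall that an ordinarily tropically nonsingular submatrix is a fortiori symmetrically tropically nonsingular, while for a principal $3 \times 3$ submatrix one checks directly that the minimum over its five symmetric determinant monomials is attained exactly once.

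The first step is that $D_+$ and $D_0$ are orthogonal. If $i \in D_+$, $\ell \in D_0$ and $A_{i,\ell} > 0$, pick $k \neq i$ with $A_{i,k} = 0$ (possible since row $i$ has a zero and $A_{i,i} > 0$; then also $k \neq \ell$): in the principal $3 \times 3$ submatrix on $\{i,\ell,k\}$ the monomial $X_{i,k}^2 X_{\ell,\ell}$ has value $0$, while each of the other four symmetric monomials is $\geq A_{i,i} > 0$ or $\geq 2A_{i,\ell} > 0$, so this submatrix is symmetrically tropically nonsingular, a contradiction. Hence all entries between $D_+$ and $D_0$ vanish, $A$ is the direct sum of $A|_{D_+}$ and $A|_{D_0}$, and I treat the two summands separately.

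On $D_+$ I would show that the graph having an edge $ij$ (for $i \neq j$) whenever $A_{i,j} > 0$ is a disjoint union of at most two cliques. It has no induced path $i - j - k$: row $j$ contains some zero $A_{j,m} = 0$, necessarily with $m \notin \{i,j,k\}$, and then the $3 \times 3$ submatrix with rows $\{i,j,m\}$ and columns $\{k,m,j\}$ has $X_{i,k} X_{j,m} X_{m,j}$ as its unique minimizing monomial, with value $0$ — this works whether $m \in D_+$ or $m \in D_0$, using orthogonality in the latter case. Since a graph with no induced $P_3$ is a disjoint union of cliques, and since a third clique would give (choosing one vertex from each of three cliques) a principal $3 \times 3$ whose unique minimizing monomial $X_{i,j} X_{j,k} X_{i,k}$ has value $0$, there are at most two cliques; call them $I_1, I_2$. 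Then $B_1 = A|_{I_1}$ and $B_2 = A|_{I_2}$ are symmetric and, being complete with positive diagonal, positive, and $A$ is identically $0$ between $I_1$ and $I_2$.

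On $D_0$ all diagonal entries are $0$, and the graph recording the positive off-diagonal entries is triangle-free — a triangle gives a principal $3 \times 3$ whose unique minimizing symmetric monomial is the product of the three (zero) diagonal variables — and in fact bipartite, because a shortest odd cycle is induced of length $\geq 5$ and, from four consecutive vertices $v_1,v_2,v_3,v_4$ of it, the submatrix with rows $\{v_1,v_2,v_3\}$ and columns $\{v_2,v_3,v_4\}$ has $X_{v_1,v_4} X_{v_2,v_2} X_{v_3,v_3}$ as its unique minimizing monomial, with value $0$. Two-colour each component of this bipartite graph, sending one colour class of each component to $I_3$ and the other to $I_4$, and let $I_0$ consist of the vertices isolated in the graph, which are exactly the indices whose row is identically $0$. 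Then $A$ vanishes on $I_3 \times I_3$ and on $I_4 \times I_4$, and $C = A|_{I_3 \times I_4}$ is nonnegative with no zero column, since a zero column of $C$ would be an identically zero column of $A$ and hence the index of an element of $I_0$. A diagonal permutation ordering the indices as $I_0, I_1, I_2, I_3, I_4$ produces the stated block form, and the exceptions are immediate: a matrix equal to $B_1$ (or $B_2$) alone is positive, so no row contains a $0$, contradicting the normalization; and the zero matrix has every $2 \times 2$ submatrix tropically singular, so symmetric tropical rank one. The step I expect to be the main obstacle is precisely the promotion of the easy triangle-free statements — which fall out of the principal $3 \times 3$ conditions — to the exact "at most two cliques on $D_+$" and "bipartite on $D_0$" descriptions: excluding induced $P_3$'s and induced odd cycles is where one must find the right non-principal $3 \times 3$ submatrix, adapting to the symmetric setting the tropical-rank-two analysis of Section 6 of \cite{dss}.
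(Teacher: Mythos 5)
Your proof is correct, but it takes a genuinely different route from the paper's. The paper first augments $A$ to $A^{+}$ by a zero row and column, invokes Theorem 4.2 of \cite{dss} (tropical rank equals one plus the dimension of the tropical convex hull of the columns) to see that $A^{+}$ still has standard tropical rank two, and then derives one key dichotomy --- any two columns of $A^{+}$ have equal or disjoint cosupports --- from a single standard tropically nonsingular $3 \times 3$ pattern; the symmetric information is layered on afterwards (at most two positive diagonal blocks via the nonsingularity of $\mathrm{diag}(a,b,c)$, then an iterative diagonal-permutation argument that assembles the zero block and the $C$, $C^{T}$ blocks). You instead work directly with $A$, split the indices by the sign of the diagonal entry, and prove three graph-theoretic facts by exhibiting explicit symmetrically nonsingular $3 \times 3$ submatrices: orthogonality of the two parts, cluster-graph structure with at most two cliques on the positive-diagonal part (your $P_{3}$-exclusion submatrix with rows $\{i,j,m\}$ and columns $\{k,m,j\}$ checks out, in both the $m \in D_{+}$ and $m \in D_{0}$ cases, as does the shortest-odd-cycle submatrix), and bipartiteness on the zero-diagonal part; the only step you share with the paper is the $\mathrm{diag}(a,b,c)$ obstruction. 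What each buys: your argument is self-contained and more elementary --- no augmentation, no tropical convexity, no cosupport dichotomy --- and it delivers exactly the block structure the lemma asserts; the paper's route proves the stronger equal-or-disjoint cosupport property and, more importantly, establishes along the way that $A^{+}$ has standard tropical rank two, a fact the paper explicitly reuses in the proof of part (1) of Lemma 5, so your version of Lemma 4, while complete as a proof of the statement, would not supply that auxiliary ingredient for the later argument.
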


\begin{proof}
  We begin by examining some properties of the matrix $A$ that are not dependent on it being symmetric. As defined in \cite{ds} the tropical convex hull of a set of real vectors $\{\textbf{v}_{1},\ldots,\textbf{v}_{m}\}$ is the set of all tropical linear combinations
  
  \begin{center}
    
    $c_{1} \odot \textbf{v}_{1} \oplus c_{2} \odot \textbf{v}_{2} \oplus \cdots \oplus c_{m} \odot \textbf{v}_{m}$ \hspace{.1 in} where $c_{1},\ldots,c_{m} \in \mathbb{R}$.
    
  \end{center}
  
  Theorem 4.2 from \cite{dss} states that the standard tropical rank of a real matrix is equal to one plus the dimension of the tropical convex hull of its columns. As the standard tropical rank of a matrix is equal to the standard tropical rank of its transpose, the standard tropical rank of a real matrix is also equal to one plus the dimension of the tropical convex hull of its rows. 
  
  We construct a matrix $A'$ from $A$ by adjoining the zero vector as the first column:
  
  \begin{center}
    
    $A' := \left(\begin{array}{cc} \textbf{0} & A \end{array}\right)$.
    
  \end{center}
  
  From $A'$ we construct the matrix $A^{+}$ by adjoining the zero row as the first row:
  
  \begin{center}
    
    $A^{+} := \left(\begin{array}{c} \textbf{0} \\ A' \end{array}\right) = \left(\begin{array}{cc} 0 & \textbf{0} \\ \textbf{0} & A \end{array}\right)$.
    
  \end{center}
  
  As the matrix $A$ has symmetric tropical rank two, by Corollary 1 it must also have standard tropical rank two. Every row of $A$ contains $0$ as its minimal entry, and so the tropical convex hull of the columns of $A'$ is equal to the tropical convex hull of the columns of $A$. Therefore, the standard tropical rank of $A'$ is two. As every column of $A'$ contains zero as its minimal entry the tropical convex hull of the rows of $A^{+}$ is equal to the tropical convex hull of the rows of $A'$. Therefore, the standard tropical rank of $A^{+}$ is two.

  We derive the asserted block decomposition of $A$ from the claim that any two columns of $A^{+}$ have either equal or disjoint cosupports, where the cosupport of a column is the set of positions where it does not have a zero. To prove this, observe that if this were not so $A^{+}$ would, possibly after permuting the rows and columns\footnote{These permutations are not required to be diagonal permutations.}, have the following submatrix, where $+$ denotes a positive entry, $?$ denotes a nonnegative entry, and the first column of the submatrix is taken from the first column of $A^{+}$. (Recall that each column of $A$ contains a zero entry.)

  \begin{center}

    $\left(\begin{array}{ccc} 0 & + & + \\ 0 & 0 & + \\ 0 & ? & 0 \end{array}\right)$

  \end{center}

  This $3 \times 3$ matrix is standard tropically nonsingular, which cannot be given $A^{+}$ has standard tropical rank two.

  We now return to properties dependent on $A$ being symmetric. If the diagonal entry $a_{i,i}$ of $A^{+}$ is positive, then, as $A^{+}$ is symmetric, for any entry $a_{j,i}$ with $j \neq i$ if $a_{j,i}$ is positive $a_{i,j}$ is as well, and this means columns $i$ and $j$ have equal cosupports. In particular, $a_{j,j}$ is positive. From this we see that the positive entries of column $i$, and the positive entries from columns with cosupports equal to column $i$, form a positive principal submatrix of $A^{+}$. After possibly a diagonal permutation, this submatrix is the submatrix $B_{1}$ of $A^{+}$. If $A^{+}$ contains additional positive diagonal entries outside of $B_{1}$ then, using identical reasoning, possibly after a diagonal permutation we have the submatrix $B_{2}$. There cannot be three positive diagonal blocks, for then we would be able to construct the $3 \times 3$ principal submatrix of $A$:
  \begin{center}
    $\left(\begin{array}{ccc} a & 0 & 0 \\ 0 & b & 0 \\ 0 & 0 & c \end{array}\right)$,
  \end{center}
  where $a,b,c > 0$. This matrix is not symmetrically tropically singular, and this would contradict that $A$ has symmetric tropical rank two. Note the difference here between the standard and the symmetric case. This $3 \times 3$ principal minor is not symmetrically tropically singular, but it is standard tropically singular. 

  After possibly another diagonal permutation we can arrange the columns and rows of $A^{+}$ so that, from left to right, the first columns are the zero columns, followed by the columns that contain $B_{1}$, followed by the columns that contain $B_{2}$. The remaining columns must all have a $0$ entry on the diagonal, and a positive entry $a_{i,j}$ for some $i \neq j$. Row $i$ obviously cannot be a zero row, nor can it intersect $B_{1}$ or $B_{2}$, and so must be below the submatrix $B_{2}$. Denote as $A''$ the submatrix formed by all columns to the right of $B_{2}$, and all rows below $B_{2}$.
  
  The submatrix $A''$ is symmetric, does not contain a zero row/column, and has $0$ along its diagonal. In particular its upper-left $1 \times 1$ principal submatrix is a zero matrix. Suppose the upper-left $k \times k$ principal submatrix of $A''$ is a zero matrix. If for some column $\textbf{a}'_{i}$ all the terms in $\textbf{a}'_{i}$ to the right of this $k \times k$ principal submatrix are $0$, then the diagonal permutation that switches indices $i$ and $k+1$ will construct an upper-left $(k+1) \times (k+1)$ principal submatrix that is a zero matrix. We continue this process until no such column $\textbf{a}'_{i}$ exists, in which case, given our result about either equal or disjoint cosupports, $A''$, and therefore $A$, has our desired block decomposition. 
  
  We note finally that $A$ cannot be just a positive block, because that would violate the assumption that the minimum value in every row/column is $0$. $A$ also cannot be the zero matrix, for then it would have symmetric tropical rank one.
\end{proof}

\begin{lem}
  If $A$ is a symmetric matrix normalized so the rows/columns have $0$ as their minimal entry, and $A^{+}$ is the augmented matrix
  
  \begin{center}
    
    $A^{+} = \left(\begin{array}{cc} 0 & \textbf{0} \\ \textbf{0} & A \end{array}\right)$,
    
  \end{center}
  
  then:
  
  \begin{enumerate}
    
  \item If $A$ has symmetric tropical rank two, so does $A^{+}$.
    
  \item If $A$ has symmetric Kapranov rank two, so does $A^{+}$.
    
  \end{enumerate}
\end{lem}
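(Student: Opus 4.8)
For part (1), note first that $A$ is a principal submatrix of $A^{+}$, so any symmetrically tropically nonsingular $2\times 2$ submatrix of $A$ persists in $A^{+}$ and the symmetric tropical rank of $A^{+}$ is at least $2$; the work is to show it is at most $2$, i.e.\ that every $3\times 3$ submatrix of $A^{+}$ is symmetrically tropically singular. A $3\times 3$ submatrix using neither the new row nor the new column is a submatrix of $A$, hence symmetrically tropically singular. For the rest I would first record that $A^{+}$ has \emph{ordinary} tropical rank $2$: by Corollary 1 the normalized $A$ has ordinary tropical rank $2$, and, exactly as in the proof of Lemma 4, adjoining a zero row and a zero column to a matrix whose rows and columns each have minimum entry $0$ leaves the tropical convex hull of the rows and of the columns, hence the ordinary tropical rank, unchanged. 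A non-principal $3\times 3$ submatrix is symmetrically tropically singular exactly when it is ordinarily tropically singular (the observation in the proof of Proposition 3), so every non-principal $3\times 3$ submatrix of $A^{+}$ through the new index is symmetrically tropically singular. This leaves only the $3\times 3$ principal submatrices of $A^{+}$ through the new index: those with an all-zero first row and column and lower-right $2\times 2$ block the principal submatrix of $A$ on $\{i,j\}$; call such a matrix $M_{ij}$.

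To rule out a symmetrically tropically nonsingular $M_{ij}$: because all entries of $A$ are nonnegative, the symmetric tropical determinant of $M_{ij}$ (built from the five monomials of the symmetric $3\times 3$ determinant) equals $\min(A_{i,i},A_{j,j},A_{i,j})$, and a short enumeration of which monomials can attain this value shows $M_{ij}$ is symmetrically tropically singular unless one of $0<A_{i,j}<\min(A_{i,i},A_{j,j})$, $A_{i,i}<\min(A_{i,j},A_{j,j})$, $A_{j,j}<\min(A_{i,j},A_{i,i})$ holds. In each exceptional case $A_{i,j}>0$ and at least one of $A_{i,i},A_{j,j}$ is positive, so after the diagonal permutation of Lemma 4 (diagonal permutations preserve the rank) $i$ and $j$ lie in one common positive diagonal block $B_{k}$. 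Choosing an index $m$ outside $B_{k}$ — one exists since $A$ is not a single positive block — gives $A_{i,m}=A_{j,m}=0$, and the $3\times 3$ principal submatrix of $A$ on $\{i,j,m\}$ differs from $M_{ij}$ only in the value $A_{m,m}\geq 0$, which occurs solely in monomials that are already dominated, so it is symmetrically tropically singular exactly when $M_{ij}$ is. Since $A$ has symmetric tropical rank $2$ it \emph{is} singular, so $M_{ij}$ is too, a contradiction. (When $A$ itself has a genuine zero row and column this last step is unnecessary, as then $M_{ij}$ is already a diagonal permutation of a $3\times 3$ submatrix of $A$.) This settles part (1).

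For part (2), let $\tilde{A}$ be a symmetric rank-$\leq 2$ lift of $A$ over $\tilde{K}$, and for $\mathbf{t}\in\tilde{K}^{n}$ put $P=(\mathbf{t}\,|\,I_{n})$ and $\tilde{A}^{+}:=P^{T}\tilde{A}P$, which has corner entry $\mathbf{t}^{T}\tilde{A}\mathbf{t}$, new row and column $\mathbf{t}^{T}\tilde{A}$ and $\tilde{A}\mathbf{t}$, and lower-right block $\tilde{A}$. This is automatically symmetric, has $\operatorname{rank}\leq\operatorname{rank}\tilde{A}\leq 2$, and restricts to $\tilde{A}$ on the $A$-block, so it is a symmetric rank-$\leq 2$ lift of $A^{+}$ as soon as the new degrees are $0$: we need $\deg\bigl((\tilde{A}\mathbf{t})_{j}\bigr)=0$ for all $j$ and $\deg\bigl(\mathbf{t}^{T}\tilde{A}\mathbf{t}\bigr)=0$. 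Taking each $t_{k}$ to be a nonzero constant, $\deg\bigl(\sum_{k}\tilde{A}_{j,k}t_{k}\bigr)\geq\min_{k}A_{j,k}=0$ since row $j$ of the normalized $A$ has minimum entry $0$, and likewise $\deg\bigl(\sum_{j,k}\tilde{A}_{j,k}t_{j}t_{k}\bigr)\geq\min_{j,k}A_{j,k}=0$; equality fails only on the zero locus of a fixed nonzero linear resp.\ quadratic form in the $t_{k}$ (nonzero because every row of $A$, and hence $A$, has a zero entry), so a generic choice of the constants makes all these degrees vanish at once. Finally $A^{+}$ has symmetric Kapranov rank at least $2$, since restricting any symmetric lift of $A^{+}$ to the $A$-block gives a symmetric lift of $A$ of no larger rank; hence it is exactly $2$.

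The main obstacle is part (1): the submatrices inside $A$ and the non-principal submatrices through the new index are immediate, but the principal $3\times 3$ submatrices through the new index are precisely where symmetric tropical rank can genuinely jump — this is the $C_{2}$ phenomenon of Section 2 — so forbidding the jump forces one to use both the normalization and the structure theorem of Lemma 4. Part (2) is comparatively soft once one sees that the congruence $P^{T}\tilde{A}P$ manufactures a symmetric low-rank lift for free and that the normalization drops every new degree onto $0$.
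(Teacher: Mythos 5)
Your proposal is correct. For part (1) it is essentially the paper's argument: establish that $A^{+}$ has ordinary tropical rank two via the tropical-convex-hull argument from Lemma 4, invoke the observation behind Proposition 3 to reduce to principal $3\times 3$ submatrices through the new index, and then use the Lemma 4 block decomposition to show a symmetrically nonsingular $M_{ij}$ would force a symmetrically nonsingular principal $3\times 3$ submatrix inside $A$ itself. Your bookkeeping of exceptional cases is slightly more generous than the paper's (the cases $A_{i,i}<\min(A_{i,j},A_{j,j})$ and $A_{j,j}<\min(A_{i,j},A_{i,i})$ make $M_{ij}$ ordinarily tropically nonsingular and so are already excluded by the ordinary rank-two statement, which is why the paper only treats $a_{i,j}<a_{i,i},a_{j,j}$), but handling them directly does no harm, and your ``pick $m$ outside the common block $B_{k}$'' step plays the same role as the paper's split into ``$A$ has a zero row/column or $C$ has positive size'' versus ``$A$ is two positive blocks.'' For part (2) your route is genuinely different: the paper extracts from Lemma 4 two nonzero columns of $A$ with disjoint cosupports and borders $\tilde{A}$ with a generic degree-zero combination $\lambda\tilde{\textbf{a}}_{i}+\mu\tilde{\textbf{a}}_{j}$, the disjointness making the no-cancellation argument immediate coordinate by coordinate; you instead take the congruence $P^{T}\tilde{A}P$ with $P=(\mathbf{t}\,|\,I_{n})$ for a generic constant vector $\mathbf{t}$, so symmetry and rank $\leq 2$ are automatic and the degree-zero conditions reduce to the nonvanishing of one linear form per row and one quadratic form, nonzero precisely because the normalization puts a zero entry in every row. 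Your version buys independence from the Lemma 4 structure in part (2) (only the normalization is used, which also quietly avoids any worry about whether two nonzero columns with disjoint cosupports actually exist), at the modest cost of a genericity argument over several forms rather than the paper's two-scalar choice.
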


\begin{proof}[Proof of part (1)]
  Suppose $A$ has symmetric tropical rank two. We may assume that, possibly after a diagonal permutation, the matrix $A$ has the block decomposition given in Lemma 4. In the proof of Lemma 4 we demonstrated that if $A$ has symmetric tropical rank two, then $A^{+}$ has standard tropical rank two. By Proposition 2 the only way a symmetric matrix can have standard tropical rank two but not symmmetric tropical rank two is if a principal $3 \times 3$ submatrix is standard tropically singular but not symmetrically tropically singular. By assumption, $A$ has symmetric tropical rank two, so the only way $A^{+}$ could not is if a principal $3 \times 3$ submatrix of $A^{+}$ involving the initial zero row/column were tropically singular but not symmetrically tropically singular. The possible $3 \times 3$ principal submatrices of this type have the forms (where an element not specified as being $0$ is positive):
  \begin{center}
    $\left(\begin{array}{ccc} 0 & 0 & 0 \\ 0 & 0 & 0 \\ 0 & 0 & 0 \end{array}\right)$, \hspace{.1 in} $\left(\begin{array}{ccc} 0 & 0 & 0 \\ 0 & 0 & 0 \\ 0 & 0 & a_{i,i} \end{array}\right)$, \hspace{.1 in} $\left(\begin{array}{ccc} 0 & 0 & 0 \\ 0 & a_{i,i} & 0 \\ 0 & 0 & a_{j,j} \end{array}\right)$,
    
    $\left(\begin{array}{ccc} 0 & 0 & 0 \\ 0 & 0 & a_{i,j} \\ 0 & a_{j,i} & 0 \end{array}\right)$, \hspace{.1 in} $\left(\begin{array}{ccc} 0 & 0 & 0 \\ 0 & a_{i,i} & a_{i,j} \\ 0 & a_{j,i} & a_{j,j} \end{array}\right)$.
  \end{center}
  
  Of these possibilities the only one that is not necessarily symmetrically tropically singular is the last one. For this possibility, it could be standard tropically singular but not symetrically tropically singular if $a_{i,j} = a_{j,i} < a_{i,i},a_{j,j}$. If $A$ contains a zero row/column or the submatrix $C$ (from Lemma 4) has positive size then, possibly after a diagonal permutation, $A$ must contain the $3 \times 3$ matrix under examination as a principal submatrix, which could not be as $A$ has symmetric tropical rank two. If $A$ consists of two positive blocks and nothing else then, possibly after a diagonal permutation, $A$ has the following $3 \times 3$ principal submatrix:
  \begin{center}
    $\left(\begin{array}{ccc} a_{i,i} & a_{i,j} & 0 \\ a_{j,i} & a_{j,j} & 0 \\ 0 & 0 & a_{k,k} \end{array}\right)$,
  \end{center}
  where $a_{k,k} > 0$. If $a_{i,j} = a_{j,i} < a_{i,i}, a_{j,j}$ then this is a principal submatrix of $A$ that is not symmetrically tropically singular, which violates our assumption that $A$ has symmetric tropical rank two. This eliminates all possible ways $A^{+}$ could not have symmetric tropical rank two, and so it must.
\end{proof}

\begin{proof}[Proof of part (2)]
  If $A$ has symmetric Kapranov rank two then there exists a rank two symmetric lift which we will call $\tilde{A}$. From Lemma 4 we know $A$ must have two nonzero columns with disjoint cosupports. Denote as $\tilde{\textbf{a}}_{i}$ and $\tilde{\textbf{a}}_{j}$ the corresponding columns in $\tilde{A}$. If $\lambda, \mu \in \tilde{K}$ have degree zero but are otherwise generic, then the vector
  \begin{center}
    $\tilde{\textbf{v}} = \lambda\tilde{\textbf{a}}_{i} + \mu\tilde{\textbf{a}}_{j}$
  \end{center}
  has all degree zero terms. This is because as $\textbf{a}_{i}$ and $\textbf{a}_{j}$ have disjoint cosupports, the sum
  \begin{center}
    $\tilde{v}_{k} = \lambda \tilde{a}_{k,i} + \mu \tilde{a}_{k,j}$
  \end{center}
  involves at least one term, $a_{k,i}$ or $a_{k,j}$, of degree zero. If both have degree zero, then $\lambda$ and $\mu$ being generic guarantees we do not have cancellation of leading terms. So, $\tilde{v}_{k}$ has degree zero.

  The matrix formed by adjoining $\tilde{\textbf{v}}$ to $\tilde{A}$,
  \begin{center}
    $\tilde{A}' := \left(\begin{array}{cc} \tilde{\textbf{v}} & \tilde{A} \end{array}\right)$,
  \end{center}
  must have rank two. If we augment $\tilde{A}'$ by adding a row formed by the linear combination of rows $i$ and $j$ of $\tilde{A}'$ multiplied by $\lambda$ and $\mu$, respectively, then as $\tilde{A}$ is symmetric we get the symmetric matrix
  \begin{center}
    $\tilde{A}^{+} := \left(\begin{array}{cc} \tilde{a}_{0,0} & \tilde{\textbf{v}}^{T} \\ \tilde{\textbf{v}} & A \end{array}\right)$.
  \end{center}
  This matrix has rank two. The entry $\tilde{a}_{0,0}$ is:
  \begin{center}
    $\tilde{a}_{0,0} = \lambda \tilde{v}_{i} + \mu \tilde{v}_{k} = \lambda (\lambda \tilde{a}_{i,i} + \mu \tilde{a}_{i,j}) + \mu (\lambda \tilde{a}_{j,i} + \mu \tilde{a}_{j,j}) = \lambda^{2}\tilde{a}_{i,i} + 2\lambda\mu \tilde{a}_{i,j} + \mu^{2}\tilde{a}_{j,j}$.
  \end{center}
  For the final equality we use $\tilde{a}_{i,j} = \tilde{a}_{j,i}$. At least one of $a_{i,i}, a_{i,j}, a_{j,j}$ has degree zero. As $\lambda, \mu$ are generic we cannot have cancellation of leading terms, and therefore $\tilde{a}_{0,0}$ has degree zero.
  
  So, the above matrix is a rank two symmetric lift of $A^{+}$, and therefore $A^{+}$ has symmetric Kapranov rank two.
\end{proof}

We now have the lemmas we need to prove the major result of this section.

\begin{thm}
  A symmetric matrix $A$ has symmetric Kapranov rank two if and only if it has symmetric tropical rank two.
\end{thm}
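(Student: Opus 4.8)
The plan is to treat the two implications separately. The forward implication is immediate: symmetric tropical rank never exceeds symmetric Kapranov rank, so a symmetric matrix of symmetric Kapranov rank two has symmetric tropical rank one or two, and were it one, Theorem 5 would force symmetric Kapranov rank one; hence its symmetric tropical rank is two. Everything of substance is in the reverse implication, whose proof is an assembly of Lemmas 1--5.

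For the reverse implication I would induct on $n$, the base case $n=3$ being Corollary 2, so that while treating an $n \times n$ matrix $A$ I may assume the statement for all smaller symmetric matrices. Suppose $A$ has symmetric tropical rank two, normalized by a symmetric scaling so that every row and column has minimal entry $0$. By Lemma 4 I may, after a diagonal permutation, take $A$ in the block form of that lemma: a (possibly empty) block of all-zero rows/columns, positive symmetric blocks $B_1$ and $B_2$, and a block assembled from $C$, $C^{T}$ and two zero blocks as in Lemma 1, where $C$ is nonnegative with no zero column. By part (2) of Lemma 5 it suffices to prove symmetric Kapranov rank two for the matrix $\mathcal{B}$ obtained from $A$ by keeping only a single all-zero row/column, and then re-adjoin the deleted ones one at a time; I would pick the retained zero row/column, when possible, to be one lying in a symmetrically tropically nonsingular $2 \times 2$ submatrix of $A$, so that $\mathcal{B}$, being a principal submatrix, still has symmetric tropical rank exactly two rather than one.

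Now two cases. If $C$ has positive size, or both $B_1$ and $B_2$ do, then $\mathcal{B}$ is exactly of the shape required by Lemma 3 (the retained zero row/column being its central entry), so Lemma 3 gives $\mathcal{B}$ symmetric Kapranov rank two, and re-adjoining the zero rows/columns via part (2) of Lemma 5 carries this back to $A$. Otherwise $C$ is empty and at most one of $B_1, B_2$ is nonempty, so up to a diagonal permutation $A$ is a single positive symmetric block $B$ bordered by at least one all-zero row/column (Lemma 4 forbidding $A = B$); here $B$ is a smaller principal submatrix, so if $B$ has symmetric tropical rank two the inductive hypothesis makes its symmetric Kapranov rank two and part (2) of Lemma 5 finishes, while if $B$ has symmetric tropical rank one, symmetric singularity of the $3 \times 3$ submatrices of $A$ spanning a zero row/column and two indices of $B$ forces all entries of $B$ to be equal, so $B$ is a positive constant matrix and a rank-two symmetric lift of $A$ is written out explicitly.

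The genuine difficulty has been front-loaded into the lemmas --- Lemma 3's explicit construction of a balanced symmetric rank-two lift, and Lemma 4's analysis of the forced block structure --- so within the proof of the theorem itself the only delicate points are the two bits of bookkeeping above: ensuring that $\mathcal{B}$ really keeps symmetric tropical rank two (which dictates the careful choice of retained zero row/column), and disposing of the degenerate constant-block case. I expect the first of these to be the one that needs the most care to state cleanly.
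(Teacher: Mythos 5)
Your overall strategy is the paper's: reduce to Lemma 4's block form, feed the core configuration into Lemma 3, and restore zero rows/columns with Lemma 5(2); the forward implication is argued exactly as in the paper. But there is a genuine gap in the reverse direction: your reduction only ever \emph{deletes} surplus zero rows/columns, so it tacitly assumes $A$ has at least one all-zero row/column. Lemma 4 explicitly allows the zero block to be empty --- for instance $A$ may consist of the two positive blocks alone, or have the form $\left(\begin{smallmatrix} \textbf{0} & C \\ C^{T} & \textbf{0} \end{smallmatrix}\right)$ with no all-zero row at all --- and then your first case has no ``retained zero row/column'' to play the role of the central entry in Lemma 3's shape, so Lemma 3 does not apply to your $\mathcal{B}$. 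This is precisely where the paper uses part (1) of Lemma 5, which you never invoke: adjoin a zero row/column to form $A^{+}$, use Lemma 5(1) to see that $A^{+}$ still has symmetric tropical rank two (this is not automatic, since adjoining a zero row/column can raise the symmetric tropical rank --- the proof of Lemma 5(1) has to rule out a principal submatrix $\left(\begin{smallmatrix} 0 & 0 & 0 \\ 0 & a_{i,i} & a_{i,j} \\ 0 & a_{j,i} & a_{j,j} \end{smallmatrix}\right)$ with $a_{i,j} < a_{i,i}, a_{j,j}$), apply Lemma 3 to $A^{+}$, and then delete the adjoined row/column from the resulting symmetric rank-two lift.

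A secondary weak point of the same flavor is your second case: you apply Lemma 5(2) to the bare positive block $B$, but that lemma is stated for matrices normalized so every row/column has minimal entry $0$, and its proof relies on the two nonzero columns with disjoint cosupports supplied by Lemma 4; a positive block satisfies neither hypothesis. Indeed the unnormalized statement is false: $B = \left(\begin{smallmatrix} 1 & 1/2 \\ 1/2 & 1 \end{smallmatrix}\right)$ has symmetric Kapranov rank two, yet bordering it by a zero row/column gives a matrix whose symmetric tropical (hence Kapranov) rank is three. In your situation the conclusion is still plausible because the bordered $3 \times 3$ principal minors are symmetrically singular (they sit inside $A$), but that means you must actually carry out a lift-extension argument in the spirit of Lemma 2, not cite Lemma 5(2). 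So of the two bookkeeping points you flag, the one that actually breaks the write-up is not the rank-dropping-to-one issue (which is easily dispatched) but the configuration with no zero row/column, i.e., the missing appeal to Lemma 5 part (1).
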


\begin{proof}[Symmetric Kapranov rank two $\Rightarrow$ symmetric tropical rank two]
  If $A$ has symmetric Kapranov rank two then by Theorem 5 it cannot have symmetric tropical rank one. The symmetric tropical rank cannot be greater than the symmetric Kapranov rank, and so $A$ must have tropical rank two.
\end{proof}

\begin{proof}[Symmetric Kapranov rank two $\Leftarrow$ symmetric tropical rank two]
  Suppose $A$ is a symmetric matrix with symmetric tropical rank two. We may assume $A$ is in the form given by Lemma 4. If $A$ has only one zero row/column then by Lemma 3 $A$ has symmetric Kapranov rank two. If $A$ has no zero row/column then the matrix
  \begin{center}
    $A^{+} = \left(\begin{array}{cc} 0 & \textbf{0} \\ \textbf{0} & A \end{array}\right)$
  \end{center}
  has symmetric tropical rank two by Lemma 5, and therefore symmetric Kapranov rank two by Lemma 3. As $A^{+}$ has symmetric Kapranov rank two, by eliminating the first row/column from the lift we see $A$ has symmetric Kapranov rank two as well.
  
  If $A$ has more than one zero row/column we may proceed by induction on the number of such columns. In particular, $A$ must have the form
  \begin{center}
    $A = \left(\begin{array}{cc} 0 & \textbf{0} \\ \textbf{0} & A^{-} \end{array}\right)$,
  \end{center}
  where $A^{-}$ is a symmetric matrix with symmetric tropical rank two, with one fewer zero row/column than $A$, and therefore by induction $A^{-}$ has symmetric Kapranov rank two. By Lemma 5 $A$ has symmetric Kapranov rank two as well.
\end{proof}

Combining Theorems 4, 5, and 6 we see that the $r \times r$ minors of a symmetric $n \times n$ matrix form a tropical basis for $r = n$, $r = 2$, and $r = 3$.

\subsection{Rank three symmetric matrices}

As with standard tropical and Kapranov rank, the rank three case is a special boundary case. In \cite{z}, the author uses a technique called "the method of joints" to prove the $4 \times 4$ minors of a symmetric $5 \times 5$ matrix of indeterminates form a tropical basis. In the next section of this paper, we prove the $4 \times 4$ minors of an $n \times n$ matrix do \emph{not} form a tropical basis when $n > 12$.

In \cite{z} the author conjectures the $4 \times 4$ minors of an $n \times n$ symmetric matrix of indeterminates do form a tropical basis when $n \leq 12$, and believes the method of joints can be generalized to prove this.

\section{When the minors of a symmetric matrix do not form a tropical basis}

In this section we prove the $r \times r$ minors of an $n \times n$  symmetric matrix do \emph{not} form a tropical basis if $4 < r < n$. Nor do they form a tropical basis if $r = 4$ and $n > 12$.

\emph{Note}: All statements in this section about tropical ranks and symmetric tropical ranks for specific matrices can be verified using Maple code available at: 

http://www.math.utah.edu/\url{~}zwick/Dissertation/

\subsection{The foundational examples}

The examination of when the minors of a standard matrix do not form a tropical basis begins with a couple foundational examples. The same is true in the symmetric case. 

In \cite{dss}, Develin, Santos, and Sturmfels proved the cocircuit matrix of the Fano matroid,
\begin{center}
  $\left(\begin{array}{ccccccc} 1 & 1 & 0 & 1 & 0 & 0 & 0 \\ 0 & 1 & 1 & 0 & 1 & 0 & 0 \\ 0 & 0 & 1 & 1 & 0 & 1 & 0 \\ 0 & 0 & 0 & 1 & 1 & 0 & 1 \\ 1 & 0 & 0 & 0 & 1 & 1 & 0 \\ 0 & 1 & 0 & 0 & 0 & 1 & 1 \\ 1 & 0 & 1 & 0 & 0 & 0 & 1 \end{array}\right)$,
\end{center}
has tropical rank three but Kapranov rank four. If we permute the rows of this matrix with the permutation given by the disjoint cycle decomposition $(27)(36)(45)$ we get the symmetric matrix
\begin{center}  
  $\left(\begin{array}{ccccccc} 1 & 1 & 0 & 1 & 0 & 0 & 0 \\ 1 & 0 & 1 & 0 & 0 & 0 & 1 \\ 0 & 1 & 0 & 0 & 0 & 1 & 1 \\ 1 & 0 & 0 & 0 & 1 & 1 & 0 \\ 0 & 0 & 0 & 1 & 1 & 0 & 1 \\ 0 & 0 & 1 & 1 & 0 & 1 & 0 \\ 0 & 1 & 1 & 0 & 1 & 0 & 0 \end{array}\right)$.
\end{center}
While this symmetric matrix has standard tropical rank three, its symmetric tropical rank is four, and it's therefore \emph{not} an example of a matrix with symmetric tropical rank three but greater symmetric Kapranov rank.

This matrix can, however, be used to construct the following symmetric matrix with symmetric tropical rank three, but greater symmetric Kapranov rank:
\begin{center}
  $\left(\begin{array}{ccccccccccccc} 0 & 0 & 0 & 0 & 0 & 0 & 1 & 1 & 0 & 1 & 0 & 0 & 0 \\ 0 & 0 & 0 & 0 & 0 & 0 & 1 & 0 & 1 & 0 & 0 & 0 & 1 \\ 0 & 0 & 0 & 0 & 0 & 0 & 0 & 1 & 0 & 0 & 0 & 1 & 1 \\ 0 & 0 & 0 & 0 & 0 & 0 & 1 & 0 & 0 & 0 & 1 & 1 & 0 \\ 0 & 0 & 0 & 0 & 0 & 0 & 0 & 0 & 0 & 1 & 1 & 0 & 1 \\ 0 & 0 & 0 & 0 & 0 & 0 & 0 & 0 & 1 & 1 & 0 & 1 & 0 \\ 1 & 1 & 0 & 1 & 0 & 0 & 0 & 1 & 1 & 0 & 1 & 0 & 0 \\ 1 & 0 & 1 & 0 & 0 & 0 & 1 & 0 & 0 & 0 & 0 & 0 & 0 \\ 0 & 1 & 0 &0 & 0 & 1 & 1 & 0 & 0 & 0 & 0 & 0 & 0 \\ 1 & 0 & 0 & 0 & 1 & 1 & 0 & 0 & 0 & 0 & 0 & 0 & 0 \\ 0 & 0 & 0 & 1 & 1 & 0 & 1 & 0 & 0 & 0 & 0 & 0 & 0 \\ 0 & 0 & 1 & 1 & 0 & 1 & 0 & 0 & 0 & 0 & 0 & 0 & 0 \\ 0 & 1 & 1 & 0 & 1 & 0 & 0 & 0 & 0 & 0 & 0 & 0 & 0 \end{array}\right)$
\end{center}
The upper-right, and bottom-left, $7 \times 7$ submatrices of this $13 \times 13$ symmetric matrix are the symmetric version of the cocircuit matrix of the Fano matroid. This $13 \times 13$ matrix has symmetric tropical rank three. If it had symmetric Kapranov rank three then its upper-right $7 \times 7$ submatrix would have standard Kapranov rank three, and this is impossible.

In \cite{sh1} Shitov proved the matrix
\begin{center}
  $\left(\begin{array}{cccccc} 0 & 0 & 4 & 4 & 4 & 4 \\ 0 & 0 & 2 & 4 & 1 & 4 \\ 4 & 4 & 0 & 0 & 4 & 4 \\ 2 & 4 & 0 & 0 & 2 & 4 \\ 4 & 4 & 4 & 4 & 0 & 0 \\ 2 & 4 & 1 & 4 & 0 & 0 \end{array}\right)$,
\end{center}
has tropical rank four but Kapranov rank five. If we permute the rows of this matrix with the permutation $(135)(246)$, and the columns with the permutation $(16)(25)(34)$, we get the symmetric matrix
\begin{center}
  $\left(\begin{array}{cccccc} 0 & 0 & 2 & 4 & 1 & 4 \\ 0 & 0 & 4 & 4 & 4 & 4 \\ 2 & 4 & 2 & 4 & 0 & 0 \\ 4 & 4 & 4 & 4 & 0 & 0 \\ 1 & 4 & 0 & 0 & 2 & 4 \\ 4 & 4 & 0 & 0 & 4 & 4 \end{array}\right)$.
\end{center}
This symmetric $6 \times 6$ matrix has symmetric tropical rank four, and, as its Kapranov rank is five, its symmetric Kapranov rank is at least five. Applying Theorem 3 we see its symmetric Kapranov rank is exactly five. So, it is a $6 \times 6$ symmetric matrix with different symmetric tropical and symmetric Kapranov ranks.

\subsection{Generating larger examples}

We now demonstrate how, given a symmetric matrix with differing symmetric tropical and symmetric Kapranov rank, we can construct larger symmetric matrices with similar rank differences.

\begin{lem}
  Suppose $A$ is an $n \times n$ symmetric matrix with symmetric tropical rank $r$. Construct the $n \times (n+1)$ matrix $A'$ from $A$ by appending to the right of $A$ a column identical to the $n$th column of $A$: 
  \begin{center}
    
    $A' := \left(\begin{array}{cc} A & \textbf{a}_{n} \end{array}\right)$.
    
  \end{center}
  Construct the $(n+1) \times (n+1)$ matrix $A''$ from $A'$ by appending to the bottom of $A'$ a row identical to the $n$th row of $A'$. So, if $\textbf{a}_{n}'$ is the $n$th row of $A'$:
  \begin{center}
    
    $A'' := \left(\begin{array}{c} A' \\ \textbf{a}_{n}' \end{array}\right) = \left(\begin{array}{cc} A & \textbf{a}_{n} \\ \textbf{a}_{n}^{T} & a_{n,n} \end{array}\right)$.
    
  \end{center}
  The matrix $A''$ is symmetric, and has symmetric tropical rank $r$.
\end{lem}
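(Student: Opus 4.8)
The plan is to verify the symmetry of $A''$ directly and then to pin its symmetric tropical rank at $r$ using the submatrix characterization: I will show $A''$ has a symmetrically tropically nonsingular $r\times r$ submatrix, and that every square submatrix of $A''$ of size exceeding $r$ is symmetrically tropically singular. Let $\psi$ be the index map sending $n+1$ to $n$ and fixing $1,\dots,n$. Since the appended row $n+1$ of $A''$ repeats its row $n$ and the appended column $n+1$ repeats its column $n$, one reads off from the construction the ``folding'' identity $A''_{i,j}=A_{\psi(i),\psi(j)}$ for all $i,j\in\{1,\dots,n+1\}$; in particular $A''_{i,j}=A''_{j,i}$, so $A''$ is symmetric. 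This identity does nearly all the work.

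For the lower bound, $A$ is the principal $n\times n$ submatrix of $A''$ on rows and columns $\{1,\dots,n\}$, carrying the same index labels, and whether a submatrix is symmetrically tropically singular depends only on its entries together with its row and column index sets (through the relation $X_{i,j}=X_{j,i}$). So any symmetrically tropically nonsingular $r\times r$ submatrix of $A$ — one exists because $A$ has symmetric tropical rank $r$ — is also one of $A''$, giving symmetric tropical rank of $A''$ at least $r$.

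For the upper bound, let $M$ be a square submatrix of $A''$ of size $s>r$, with row index set $I$ and column index set $J$. If $\{n,n+1\}\subseteq I$ then rows $n$ and $n+1$ of $A''$ agree, so $M$ has a repeated row and is symmetrically tropically singular by Proposition~4; likewise via a repeated column if $\{n,n+1\}\subseteq J$. Otherwise $\psi$ is injective on $I$ and on $J$, so $\widehat M$, the submatrix of $A$ on rows $\psi(I)$ and columns $\psi(J)$, is an $s\times s$ submatrix of $A$ which, by the folding identity, carries exactly the entries of $M$ (up to relabeling by $\psi$); since $s>r$ and $A$ has symmetric tropical rank $r$, $\widehat M$ is symmetrically tropically singular. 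To transfer this to $M$, note that the substitution $X_{i,j}\mapsto X_{\psi(i),\psi(j)}$ carries the permutation monomial of each bijection $J\to I$ (for $M$) to the monomial of the corresponding bijection (for $\widehat M$), preserving its value; hence it can only coarsen the partition of bijections into equal-monomial classes. So every equal-monomial class of $\widehat M$ is a union of such classes of $M$ with the same value, and since singularity of $\widehat M$ means at least two of its disjoint classes attain the minimum value, at least two classes of $M$ do too. Thus $M$ is symmetrically tropically singular, which handles every $M$ of size greater than $r$ and completes the proof.

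I expect the transfer from $\widehat M$ back to $M$ to be the crux. Symmetric tropical singularity is \emph{not} preserved under relabeling row and column indices — collapsing $n+1$ to $n$ can manufacture new coincidences among the variables $X_{i,j}=X_{j,i}$, so a priori $\widehat M$ might be singular while $M$ is not — and the argument survives only because the folding substitution is one-directional, turning Proposition~1's cycle-structure description of equal-monomial classes into a genuine coarsening. One must check that this collapse never identifies two distinct diagonal variables or otherwise degrades the class structure, which it does not, since the appended row and column faithfully reproduce the data indexed by $n$.
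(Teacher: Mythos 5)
Your proof is correct and follows essentially the same route as the paper's: symmetry is immediate from the construction, submatrices containing both indices $n$ and $n+1$ are handled by the repeated row/column argument via Proposition 4, and the remaining submatrices are identified with submatrices of $A$ by replacing the index $n+1$ with $n$. The only difference is that you make explicit (and correctly verify) the point the paper passes over with the word ``identical''---that symmetric tropical singularity transfers under the folding of indices because the substitution $X_{i,j}\mapsto X_{\psi(i),\psi(j)}$ preserves values and can only merge equal-monomial classes, which is the favorable direction---and you check all submatrix sizes exceeding $r$ rather than only size $r+1$.
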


\begin{proof}
  That the matrix $A''$ is symmetric given $A$ is symmetric is obvious from its construction. Also note columns $n$ and $n+1$ of $A''$ are identical, as are rows $n$ and $n+1$.

  Suppose $M$ is an $(r+1) \times (r+1)$ submatrix of $A''$ that inherits its row and column indices from $A''$. If the largest row and column indices of $M$ or both less than $n+1$, then $M$ is a submatrix of $A$, and by assumption $M$ is symmetrically tropically singular.

  If $M$ contains row indices $n$ and $n+1$, then $M$ has two identical rows, and by Proposition 4 is symetrically tropically singular. Same is true if $M$ contains column indices $n$ and $n+1$.
  
  If $M$ contains row index $n+1$ but not $n$, then it's identical to the $(r+1) \times (r+1)$ submatrix $M'$ replacing row index $n+1$ with $n$. Same applies to column indices, and so $M$ is equivalent to a submatrix of $A''$ with row and column indices both less than $n+1$, which again is symetrically tropically singular by assumption.
\end{proof}

\begin{cor}
  If the $r \times r$ minors of an $n \times n$ symmetric matrix of variables are not a tropical basis, then the $r \times r$ minors of an $(n+1) \times (n+1)$ symmetric matrix of variables are not a tropical basis.
\end{cor}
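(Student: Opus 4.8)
The plan is to pad a witness matrix using Lemma 6. By definition, saying that the $r \times r$ minors of an $n \times n$ symmetric matrix of variables do \emph{not} form a tropical basis means that the tropical variety they define is properly contained in the tropical prevariety they define; so there is an $n \times n$ real symmetric matrix $A$ every one of whose $r \times r$ submatrices is symmetrically tropically singular (that is, $A$ lies in the prevariety), but whose symmetric Kapranov rank is at least $r$ (that is, no symmetric lift of $A$ over $\tilde{K}$ has rank less than $r$, so $A$ is not in the variety). I will produce from $A$ a witness of the same kind of size $(n+1) \times (n+1)$.

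First I form $A''$ from $A$ exactly as in Lemma 6 -- append a copy of the $n$-th column $\textbf{a}_{n}$ of $A$, then a copy of the resulting $n$-th row -- obtaining the $(n+1) \times (n+1)$ symmetric matrix
\[
  A'' = \left(\begin{array}{cc} A & \textbf{a}_{n} \\ \textbf{a}_{n}^{T} & a_{n,n} \end{array}\right),
\]
which has $A$ as its upper-left $n \times n$ block, has rows $n$ and $n+1$ equal, and has columns $n$ and $n+1$ equal. Next I claim every $r \times r$ submatrix $M$ of $A''$ is symmetrically tropically singular, so that $A''$ lies in the tropical prevariety of the $r \times r$ minors. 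This is the case analysis in the proof of Lemma 6, carried out at size $r$: if all the row and column indices of $M$ are at most $n$, then $M$ is a submatrix of $A$, hence symmetrically tropically singular by hypothesis; if $M$ uses both row indices $n$ and $n+1$, or both column indices $n$ and $n+1$, then $M$ has a repeated row or column and is symmetrically tropically singular by Proposition 4; and if $M$ uses row index $n+1$ but not $n$, or column index $n+1$ but not $n$, then replacing each occurrence of $n+1$ by $n$ yields an entrywise-identical submatrix all of whose indices are at most $n$, reducing to one of the previous cases. These cases are exhaustive.

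Finally I show $A''$ is not in the tropical variety of the $r \times r$ minors, that is, its symmetric Kapranov rank is at least $r$. If $A''$ had a symmetric lift $\tilde{A}''$ over $\tilde{K}$ of rank strictly less than $r$, then the upper-left $n \times n$ block of $\tilde{A}''$ would be a symmetric matrix over $\tilde{K}$ lifting $A$, and its rank would be at most the rank of $\tilde{A}''$, hence strictly less than $r$ -- contradicting that the symmetric Kapranov rank of $A$ is at least $r$. Hence $A''$ is a symmetric $(n+1) \times (n+1)$ matrix lying in the prevariety but not the variety of the $r \times r$ minors, which is exactly what is needed.

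This is a routine corollary, and there is no real obstacle; the only two points that deserve attention are that padding with a duplicated row and column cannot manufacture a symmetrically tropically nonsingular $r \times r$ submatrix -- which is precisely what Lemma 6 (through Proposition 4) ensures -- and that restricting a lift of $A''$ to its upper-left block neither spoils the lifting property nor raises the rank, both of which are immediate because $A$ is literally a submatrix of $A''$ and matrix rank is monotone under passing to submatrices.
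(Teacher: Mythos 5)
Your proposal is correct and follows essentially the same route as the paper: take a witness matrix $A$ in the prevariety but not the variety, pad it via the duplicated row/column construction of Lemma 6 (your case analysis at size $r$ is exactly the paper's Lemma 6 argument), and note that restricting a low-rank symmetric lift of $A''$ to its upper-left block would give a forbidden low-rank symmetric lift of $A$. The only cosmetic difference is that you re-verify the singularity of the $r \times r$ submatrices directly rather than quoting Lemma 6 for a matrix of symmetric tropical rank exactly $r-1$, which changes nothing of substance.
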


\begin{proof}
  That the $r \times r$ minors of an $n \times n$ symmetric matrix of variables are not a tropical basis is equivalent to the existence of an $n \times n$ symmetric matrix with symmetric tropical rank $r-1$, but greater symmetric Kapranov rank. If $A$ is such a matrix, then, by Lemma 6 above, there exists an $(n+1) \times (n+1)$ matrix $A''$ with symmetric tropical rank $r-1$ containing $A$ as a principal submatrix. If $A''$ had symmetric Kapranov rank $r-1$ so would $A$, and so the symmetric Kapranov rank of $A''$ must be greater than $r-1$. This implies the $r \times r$ minors of an $(n+1) \times (n+1)$ symmetric matrix of variables are not a tropical basis.
\end{proof}

\begin{lem}
  Suppose $A$ is an $n \times n$ symmetric matrix with tropical rank $r$. Construct the $(n+1) \times (n+1)$ matrix $A'$ from $A$ by choosing a number $M$ that is less than any entry of $A$, a number $P$ that is greater than any entry of $A$, and defining
  \begin{center}
    $A' = \left(\begin{array}{ccc|c} & & & P \\ & A & & \vdots \\ & & & P \\ \hline P & \cdots & P & M \end{array}\right)$.
  \end{center}
  The matrix $A'$ has symmetric tropical rank $r+1$.
\end{lem}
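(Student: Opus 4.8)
The plan is to prove the two bounds ``the symmetric tropical rank of $A'$ is at least $r+1$'' and ``\dots\ is at most $r+1$'' separately, in each case arguing directly about which bijections realize the relevant tropical determinants, and exploiting throughout that $M$ is strictly smaller than, and each $P$ strictly larger than, every entry of $A$.

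\textbf{Lower bound.} Since $A$ has symmetric tropical rank $r$, it has a symmetrically tropically nonsingular $r\times r$ submatrix $B$, say on rows $R$ and columns $C$. I would look at the $(r+1)\times(r+1)$ submatrix $M_{+}$ of $A'$ on rows $R\cup\{n+1\}$ and columns $C\cup\{n+1\}$. The point is that every bijection realizing $\mathrm{tropdet}(M_{+})$ must send $n+1$ to $n+1$: a bijection $\rho$ with $\rho(n+1)=j_{0}\ne n+1$ also has $\rho^{-1}(n+1)=i_{0}\ne n+1$, hence picks up the two entries $P$ from positions $(n+1,j_{0})$ and $(i_{0},n+1)$, whereas rerouting $n+1\mapsto n+1$, $i_{0}\mapsto j_{0}$ replaces those by $M+A_{i_{0},j_{0}}$, and $2P-M-A_{i_{0},j_{0}}>P-M>0$ (using that $P$ exceeds and $M$ lies below every entry of $A$). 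Therefore $\mathrm{tropdet}(M_{+})=M+\mathrm{tropdet}(B)$ and the minimizing monomials of $M_{+}$ are exactly $X_{n+1,n+1}$ times those of $B$; uniqueness for $B$ modulo $X_{i,j}=X_{j,i}$ forces uniqueness for $M_{+}$, so $M_{+}$ is symmetrically tropically nonsingular and the symmetric tropical rank of $A'$ is at least $r+1$.

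\textbf{Upper bound.} I would show every $(r+2)\times(r+2)$ submatrix $N$ of $A'$ is symmetrically tropically singular, splitting on how $N$ meets the new row and column. If $N$ avoids both, it is a size-$(r+2)$ submatrix of $A$ and hence symmetrically tropically singular, since $A$ has symmetric tropical rank $r$. If $N$ contains both, the rerouting argument above forces every optimal bijection to match row $n+1$ to column $n+1$ through the entry $M$, so the minimizing monomials of $N$ are $X_{n+1,n+1}$ times those of the $(r+1)\times(r+1)$ submatrix of $A$ obtained by deleting row and column $n+1$; that submatrix is symmetrically tropically singular, and its two distinct minimizing monomials lift to two distinct ones for $N$. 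The remaining case is that $N$ contains the new row but not the new column (and, by the same argument with rows and columns exchanged, the mirror image): then the restriction of row $n+1$ to $N$ is the constant vector $(P,\dots,P)$, so $\mathrm{tropdet}(N)=P+\min_{j}\mathrm{tropdet}(D_{j})$, where $D_{j}$ is the $(r+1)\times(r+1)$ submatrix of $A$ obtained by deleting row $n+1$ and column $j$ from $N$. If the minimum is attained at two columns $j_{1}\ne j_{2}$, the associated minimizing monomials of $N$ already differ, in the factors $X_{n+1,j_{1}}$ versus $X_{n+1,j_{2}}$; if it is attained only at $j^{*}$, then $D_{j^{*}}$, being a size-$(r+1)$ submatrix of $A$, is symmetrically tropically singular, and its two minimizing monomials lift to two for $N$. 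In every case $N$ is symmetrically tropically singular, so the symmetric tropical rank of $A'$ is at most $r+1$, and combined with the lower bound it equals $r+1$.

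The main obstacle I expect is this last case of the upper bound, where the restricted new row is the constant vector $(P,\dots,P)$: no single entry pins down the optimal assignment, so one genuinely has to split on whether the cheapest column for that row is unique, and it is only the constraint coming from $A$ --- that every $(r+1)\times(r+1)$ submatrix of $A$ is symmetrically tropically singular --- that rescues the argument, since a constant row by itself does not force tropical singularity. The one routine point to record is that adjoining a common factor $X_{n+1,n+1}$ or $X_{n+1,j}$, in which the index $n+1$ occurs nowhere else, preserves distinctness of monomials modulo the relation $X_{i,j}=X_{j,i}$.
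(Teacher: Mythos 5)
Your proof is correct and follows essentially the same route as the paper: the lower bound via the $(r+1)\times(r+1)$ submatrix obtained by adjoining the new row and column (forcing the corner entry $M$ into every minimizing monomial), and the upper bound by the same three-way case analysis of $(r+2)\times(r+2)$ submatrices. If anything, your treatment of the case where the submatrix meets the new row but not the new column is more careful than the paper's terse ``row expansion'' remark, since you explicitly split on whether the cheapest column for the all-$P$ row is unique.
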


\begin{proof}
  As $A$ has symmetric tropical rank $r$ there is an $r \times r$ submatrix of $A$ that is not symmetrically tropically singular. Let $a_{1},\ldots,a_{r}$ denote the rows of $A$ that define this submatrix, $b_{1},\ldots,b_{r}$ denote the columns of $A$ that define this submatrix, and $D$ denote the submatrix's tropical determinant. The tropical determinant of the $(r+1) \times (r+1)$ submatrix of $A'$ defined by the rows $a_{1},\ldots,a_{r},a_{n+1}$, and the columns $b_{1},\ldots,b_{r},b_{n+1}$ must, given the definitions of $P$ and $M$, be equal to $D \odot M$, and the submatrix must be nonsingular. So, the tropical rank of $A'$ must be at least $r+1$.
  
  Take any $(r+2) \times (r+2)$ submatrix of $A'$. If it is a submatrix of $A$ then, as $A$ has tropical rank $r$, it must be singular. If the submatrix is formed from row $n+1$ of $A'$, but not column $n+1$, then we can see it must be tropically singular by taking a row expansion along the submatrix's bottom row, and noting that every $(r+1) \times (r+1)$ submatrix of $A$ is tropically singular. Similarly, if the submatrix is formed from column $n+1$ of $A'$, but not row $n+1$, the submatrix must be tropically singular. Finally, if the submatrix is formed from row $n+1$ and column $n+1$ then, given the definitions of $P$ and $M$, every tropical product of terms that equals the tropical determinant must involve the term $a_{n+1,n+1} = M$, and singularity of the $(r+2) \times (r+2)$ submatrix follows from the fact that every $(r+1) \times (r+1)$ submatrix of $A$ is tropically symetrically singular. So, the symmetric tropical rank of $A'$ is at most $r+1$, is therefore $r+1$.
\end{proof}

\begin{cor}
  If the $r \times r$ minors of an $n \times n$ symmetric matrix of variables are not a tropical basis, then the $(r+1) \times (r+1)$ minors of an $(n+1) \times (n+1)$ symmetric matrix of variables are not a tropical basis.
\end{cor}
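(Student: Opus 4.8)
The plan is to follow the proof of Corollary 3 verbatim, substituting Lemma 7 for Lemma 6 and supplying the one extra ingredient it needs: that the corner construction of Lemma 7 also raises the symmetric Kapranov rank. As in Corollary 3, the statement that the $r \times r$ minors of an $n \times n$ symmetric matrix of variables are not a tropical basis is equivalent to the existence of an $n \times n$ symmetric matrix $A$ with symmetric tropical rank $r-1$ but strictly greater symmetric Kapranov rank (in particular, symmetric Kapranov rank at least $r$). Fixing such an $A$, apply Lemma 7 with its parameter ``$r$'' equal to our $r-1$; this produces an $(n+1) \times (n+1)$ symmetric matrix $A'$ of symmetric tropical rank $r$. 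It therefore suffices to prove that $A'$ has symmetric Kapranov rank at least $r+1$.

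For this, suppose toward a contradiction that $A'$ has a symmetric lift $\tilde{A}'$ of rank at most $r$, and write it in the block form $\tilde{A}' = \left(\begin{smallmatrix} \tilde{A} & \tilde{\mathbf{c}} \\ \tilde{\mathbf{c}}^{T} & d \end{smallmatrix}\right)$, where $\tilde{A}$ is the upper-left $n \times n$ block (a symmetric lift of $A$), the entries of $\tilde{\mathbf{c}}$ have degree $P$ (the large constant of Lemma 7), and $d$ has degree $M$ (the small constant), so in particular $d \neq 0$. A Schur complement computation with respect to the invertible $1 \times 1$ block $d$ gives $\operatorname{rank}(\tilde{A}') = 1 + \operatorname{rank}\!\big(\tilde{A} - d^{-1}\tilde{\mathbf{c}}\tilde{\mathbf{c}}^{T}\big)$. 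The point is that $\tilde{A} - d^{-1}\tilde{\mathbf{c}}\tilde{\mathbf{c}}^{T}$ is still a symmetric lift of $A$: each correction entry $d^{-1}\tilde{c}_{i}\tilde{c}_{j}$ has degree $2P - M$, which, because $P$ exceeds and $M$ lies below every entry of $A$, is strictly larger than every entry of $A$, so subtracting it neither changes the degrees of the entries of $\tilde{A}$ nor makes any of them zero. Consequently $\operatorname{rank}(\tilde{A} - d^{-1}\tilde{\mathbf{c}}\tilde{\mathbf{c}}^{T})$ is at least the symmetric Kapranov rank of $A$, which is at least $r$; hence $\operatorname{rank}(\tilde{A}') \geq r+1$, contradicting our assumption. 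Thus $A'$ has symmetric tropical rank $r$ and symmetric Kapranov rank at least $r+1$, which is precisely the assertion that the $(r+1) \times (r+1)$ minors of an $(n+1) \times (n+1)$ symmetric matrix of variables are not a tropical basis.

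The main obstacle is the degree bookkeeping in the Schur complement step: everything hinges on the correction term $d^{-1}\tilde{\mathbf{c}}\tilde{\mathbf{c}}^{T}$ having entries of degree strictly above all of $\tilde{A}$, so that it perturbs $\tilde{A}$ over $\tilde{K}$ without disturbing its tropicalization — and this is exactly why Lemma 7 builds the large separation between $P$ and $M$ into the construction. Everything else is routine: the Schur complement identity is ordinary linear algebra over the field $\tilde{K}$, and the reformulation of ``not a tropical basis'' in terms of a rank gap is the same one used in Corollary 3. One may also note as a byproduct that the Schur complement identity shows each application of Lemma 7 raises symmetric Kapranov rank by at least one, so the rank gap is never destroyed; this is what justifies taking the symmetric tropical rank of $A$ to be exactly $r-1$ above rather than merely at most $r-1$.
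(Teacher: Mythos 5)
Your proposal is correct and is essentially the paper's own argument: the paper reduces a putative rank-$r$ symmetric lift of $A'$ by symmetric row/column operations that clear the last row and column against the corner entry, which is exactly the Schur complement $\tilde{A} - d^{-1}\tilde{\mathbf{c}}\tilde{\mathbf{c}}^{T}$ you compute, and it relies on the same degree bookkeeping (the corrections have degree $2P - M$, exceeding every entry of $A$) to see that the result is still a symmetric lift of $A$, yielding the same contradiction with the symmetric Kapranov rank of $A$ being at least $r$. Your packaging of the elimination into the single rank identity $\operatorname{rank}(\tilde{A}') = 1 + \operatorname{rank}(\tilde{A} - d^{-1}\tilde{\mathbf{c}}\tilde{\mathbf{c}}^{T})$ is just a cleaner statement of the same step.
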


\begin{proof}
  Suppose $A$ is an $n \times n$ symmetric matrix with symmetric tropical rank $r-1$ but greater symmetric Kapranov rank. By Lemma 7 the $(n+1) \times (n+1)$ matrix $A$ has symmetric tropical rank $r$. Suppose $\tilde{A}'$ is a rank $r$ symmetric lift of $A'$
  \begin{center}
    $\tilde{A}' = \left(\begin{array}{ccc|c} & & & \tilde{a}_{1,n+1} \\ & \tilde{A} & & \vdots \\ & & & \tilde{a}_{n,n+1} \\ \hline \tilde{a}_{n+1,1} & \cdots & \tilde{a}_{n+1,n} & \tilde{a}_{n+1,n+1} \end{array}\right)$.
  \end{center}
  If we multiply row $n+1$ of $\tilde{A}'$ by $\tilde{a}_{n,n+1}/\tilde{a}_{n+1,n+1}$ and subtract it from row $n$, and then multiply column $n+1$ by the same constant and subtract it from column $n$, we get the matrix
  \begin{center}
    $\tilde{B}' = \left(\begin{array}{ccc|c} & & & \tilde{a}_{1,n+1} \\ & \tilde{B} & & \vdots \\ & & & 0 \\ \hline \tilde{a}_{n+1,1} & \cdots & 0 & \tilde{a}_{n+1,n+1} \end{array}\right)$.
  \end{center}
  The matrix $\tilde{B}'$ is symmetric, has the same rank as $\tilde{A}'$, and from the definitions of $P$ and $M$ we see the degrees of the elements in $\tilde{B}'$ are the same as the corresponding elements in $\tilde{A}'$. Continuing in this manner, we can construct a symmetric matrix
  \begin{center}
    $\tilde{C}' = \left(\begin{array}{ccc|c} & & & 0 \\ & \tilde{C} & & \vdots \\ & & & 0 \\ \hline 0 & \cdots & 0 & \tilde{a}_{n+1,n+1} \end{array}\right)$.
  \end{center}
  where $\tilde{C}'$ has the same rank as $\tilde{A}'$, and the degrees of all elements in $\tilde{C}$ are the same as the corresponding elements in $\tilde{A}$. If this matrix has rank $r$, then the rank of $\tilde{C}$ must be $r-1$, but then $\tilde{C}$ would be a symmetric lift of $A$ with rank $r-1$, which cannot be. So, there is no rank $r$ symmetric lift of $A'$, and therefore $A'$ has symmetric troical rank $r$ but greater symmetric Kapranov rank, which means the $(r+1) \times (r+1)$ minors of an $(n+1) \times (n+1)$ symmetric matrix of indeterminates do not form a tropical basis.
\end{proof}

Combining the foundational examples from this section with the corollaries from this section, we obtain our theorem on when the $r \times r$ minors of an $n \times n$ symmetric matrix do not form a tropical basis.

\begin{thm}
  The $r \times r$ minors of an $n \times n$ symmetric matrix do not form a tropical basis when $4 < r < n$, or when $r = 4$ and $n > 12$.
\end{thm}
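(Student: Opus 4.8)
The plan is to bootstrap from the two foundational examples of the previous subsection, using Corollary 3 (which increases $n$ while holding $r$ fixed) and Corollary 4 (which increases $r$ and $n$ together by one) as propagation tools; together these two steps reach every pair $(r,n)$ in the claimed range from a single small base case in each regime, so the whole argument is an induction/bookkeeping assembly on top of work already done.

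First I would dispose of the case $r = 4$. The $13 \times 13$ symmetric matrix built from two copies of the symmetric Fano cocircuit matrix has symmetric tropical rank three but symmetric Kapranov rank at least four, so the $4 \times 4$ minors of a $13 \times 13$ symmetric matrix of indeterminates do not form a tropical basis. Applying Corollary 3 repeatedly then gives that the $4 \times 4$ minors of an $n \times n$ symmetric matrix of indeterminates fail to be a tropical basis for every $n \geq 13$, i.e.\ for $r = 4$ and $n > 12$.

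Next I would handle $r \geq 5$. The $6 \times 6$ symmetric matrix obtained from Shitov's example has symmetric tropical rank four and, by Theorem 3, symmetric Kapranov rank exactly five, so the $5 \times 5$ minors of a $6 \times 6$ symmetric matrix of indeterminates do not form a tropical basis; this is the base case $(r,n) = (5,6)$. Applying Corollary 4 a total of $r - 5$ times gives that the $r \times r$ minors of an $(r+1) \times (r+1)$ symmetric matrix of indeterminates do not form a tropical basis for every $r \geq 5$, and then applying Corollary 3 repeatedly pushes $n$ beyond $r+1$. Hence the $r \times r$ minors of an $n \times n$ symmetric matrix of indeterminates do not form a tropical basis for every $r \geq 5$ and every $n \geq r+1$, which is precisely the range $4 < r < n$.

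The genuinely hard part — constructing the two foundational examples and certifying their symmetric tropical and symmetric Kapranov ranks (via Theorem 3 for the $6 \times 6$ matrix, and via the impossibility of a standard rank-three lift of the $7 \times 7$ Fano block for the $13 \times 13$ matrix) — is already in hand, so the only care needed here is combinatorial: checking that the two inductive steps together reach every $(r,n)$ with $4 < r < n$ from the base case $(5,6)$, and noting that $r = 4$ genuinely requires its own foundational example, since Corollary 4 could only produce the row $r = 4$ from a failure at $r = 3$, and the $3 \times 3$ minors \emph{do} form a tropical basis by Theorem 6.
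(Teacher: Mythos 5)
Your proposal is correct and follows the paper's own route: the paper proves this theorem exactly by inductively applying Corollaries 3 and 4 to the two foundational examples (the $13\times 13$ Fano-based matrix for $r=4$, $n>12$, and the $6\times 6$ Shitov-based matrix as the base case $(r,n)=(5,6)$ for $4<r<n$). Your write-up simply makes the bookkeeping explicit, which the paper leaves as "follows immediately."
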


\begin{proof}
The follows immediately from inductively applying Corollaries 3 and 4 to the foundational examples from this section.
\end{proof}

Combining the results from Theorems 4, 5, 6, and 7, we get the overall result in Theorem 1.

\section{Further questions}

The most significant remaining question is whether the $r \times r$ minors of an $n \times n$ symmetric matrix of indeterminates form a tropical basis when $r = 4$ and $5 < n < 13$.

\newtheorem{question}{Question}
\begin{question}
  Do the $4 \times 4$ minors of an $n \times n$ symmetric matrix of indeterminates form a tropical basis when $5 < n < 13$?
\end{question}

As mentioned in section 3, in \cite{z} the author proves the $4 \times 4$ minors of a $5 \times 5$ symmetric matrix of indeterminates form a tropical basis. While a modified version of the approach used there could work for larger symmetric matrices, the number of cases that would need to be checked is significant, and checking them all would likely be arduous. In Section 3 of \cite{cjr} Chan, Jensen, and Rubei compute the set of $5 \times 5$ matrices of tropical rank at most 3, and of Kapranov rank at most 3, using the software Gfan. They then compare the sets and show they are equal. The most straightforward way to answer Question 1 might be to use Gfan or some other computational software package.

Along with Kapranov and tropical rank, in \cite{dss} Develin, Santos, and Sturmfels define a third notion of matrix rank in tropical geometry called the \emph{Barvinok rank}, and prove
\begin{center}
  tropical rank $\leq$ Kapranov rank $\leq$ Barvinok rank
\end{center}
and these inequalities can be strict.

In \cite{cc} Cartwright and Chan define the \emph{symmetric Barvinok rank}, along with two other notions of tropical rank for symmetric matrices, the \emph{star tree rank}, and the \emph{tree rank}, which have no analogs for general matrices. They prove
\begin{center}
  tree rank $\leq$ star tree rank $\leq$ symmetric Barvinok rank
\end{center}
and the inequality can be strict.

In \cite{z}, I prove
\begin{center}
  symmetric Kapranov rank $\leq$ symmetric Barvinok rank
\end{center}
and the inequality can be strict.

We have in total five distinct notions of rank for symmetric matrices coming from tropical geometry, and it would be interesting to see how they all fit together.

\begin{question}
  What inequalities exist between symmetric tropical, symmetric Kapranov, tree, star tree, and symmetric Barvinok ranks?
\end{question}
In particular, it remains to be investigated how symmetric tropical and symmetric Kapranov ranks relate to tree and star tree ranks, if they do at all. 

Finally, in \cite{kr} Kim and Roush prove that as $min(m,n)$ grows, the maximum difference between the tropical rank and the Kapranov rank of an $m \times n$ matrix grows without bound. A similar question can be asked in the symmetric case.

\begin{question}
  As $n$ grows, does the maximum difference between the symmetric tropical rank and the symmetric Kapranov rank of an $n \times n$ symmetric matrix grow without bound?
\end{question}

This question was answered in the affirmative for the difference between star tree rank and symmetric Barvinok rank in \cite{cc}, and in the affirmative for the difference between symmetric Kapranov rank and symmetric Barvinok rank in \cite{z}. If other inequalities exist between distinct notions of rank for symmetric matrices coming from tropical geometry, the same question, mutatis mutandis, would apply.


\begin{thebibliography}{b}

\bibitem[1]{cc}
  Dustin Cartwright and Melody Chan,
  \newblock{\emph{Three notions of tropical rank for symmetric matrices},}
  \newblock{Combinatoria. \textbf{32} (2012), no. 1, 55-84.}

\bibitem[2]{cjr}
  Melody Chan, Anders Jensen, and Elena Rubei,
  \newblock{\emph{The $4 \times 4$ minors of a $5 \times n$ matrix are a tropical basis},}
  \newblock{Linear Algebra and its Applications. \textbf{435} (2011), 1590-1611.}
  
\bibitem[3]{dss}
  Mike Develin, Francisco Santos, and Bernd Sturmfels,
  \newblock{\emph{On the rank of a tropical matrix},}
  \newblock{Discrete and Computational Geometry, (E. Goodman, J.Pach and E.Welzl, eds.), MSRI Publicaitons, Cambridge Univ. Press, 2005.}
  
\bibitem[4]{ds}
  Mike Develin and Bernd Sturmfels,
  \newblock{\emph{Tropical convexivity},}
  \newblock{Documenta Mathematica. \textbf{9} (2004), 1-27.}
  
\bibitem[5]{kr}
  K.H. Kim and F.W. Roush,
  \newblock{\emph{Kapranov rank vs. tropical rank},}
  \newblock{Proceedings of the American Mathematical Society. \textbf{134} (2006), no. 9, 2487-2494.}

\bibitem[6]{ms}
  Diane Maclagan and Bernd Sturmfels,
  \newblock{\emph{Introduction to tropical geometry},}
  \newblock{Graduate Studies in Mathematics, vol. 161, American Mathematical Society, 2015.}

\bibitem[7]{sh1}
  Yaroslav Shitov,
  \newblock{\emph{Example of a 6-by-6 matrix with different tropical and Kapranov ranks}, (Russian, with English version available at arXiv:1012.5507v1.),}
  \newblock{Vestnik Moskov. Univ. Ser. 1. \textbf{5} (2011), 58-61.}
  
\bibitem[8]{sh2}
  Yaroslav Shitov,
  \newblock{\emph{When do the r-by-r minors of a matrix form a tropical basis?},}
  \newblock{Journal of Combinatorial Theory, Series A. \textbf{120} (2013), 1166-1201.}

\bibitem[9]{z}
  Dylan Zwick,
  \newblock{\emph{Variations on a theme of symmetric tropical matrices},}
  \newblock{2014, http://www.math.utah.edu/~zwick/Dissertation/dissertation.pdf.}
\end{thebibliography}
\end{document}